\date{}
\author{Dan Fretwell, Jenny Roberts}
\theoremstyle{plain}
\newtheorem{thm}{Theorem}[section]
\newtheorem*{thm*}{Theorem}
\newtheorem{lem}[thm]{Lemma}
\newtheorem{cor}[thm]{Corollary}
\newtheorem{prop}[thm]{Proposition}
\theoremstyle{definition}
\newtheorem{define}[thm]{Definition}
\title{Hilbert modular Eisenstein congruences of local origin.}
\begin{document}

\begin{abstract}
Let $F$ be an arbitrary totally real field. Under standard conditions we prove the existence of certain Eisenstein congruences between parallel weight $k \geq 3$ Hilbert eigenforms of level $\mathfrak{mp}$ and Hilbert Eisenstein series of level $\mathfrak{m}$, for arbitrary ideal $\mathfrak{m}$ and prime ideal $\mathfrak{p}\nmid \mathfrak{m}$ of $\mathcal{O}_F$. Such congruences have their moduli coming from special values of Hecke $L$-functions and their Euler factors, and our results allow for the eigenforms to have non-trivial Hecke character. After this, we consider the question of when such congruences can be satisfied by newforms, proving general results about this.
\end{abstract}

\maketitle

\section{Introduction}

One of Ramanujan's many remarkable observations is that the Fourier coefficients $\tau(n)$ of the discriminant function: \[\Delta(q) = q\prod_n(1-q^n)^{24} = \sum_{n\geq 1}\tau(n)q^n \in S_{12}(\text{SL}_2(\mathbb{Z}))\] satisfy $\tau(n)\equiv \sigma_{11}(n) \bmod 691$ for all $n\geq 1$ (here $\sigma_{11}(n) = \sum_{d\mid n}d^{11}$). This family of congruences can be explained via the congruence $\Delta \equiv E_{12} \bmod 691$ of modular forms (where $E_{12}$ is the weight $12$ Eisenstein series). The modulus $691$ is explained by the fact that $E_{12}$ is a mod $691$ cusp form, since the constant term of $E_{12}$ satisfies $\text{ord}_{691}\left(-\frac{B_{12}}{24}\right) > 0$.

Over the years, many other congruences have been found between cusp forms and Eisenstein series. For example, varying the weight of the forms involved leads to congruences that have moduli dividing the numerators of other Bernoulli numbers. Such congruences have been a key tool in proving various deep results in algebraic number theory, e.g.\ the Herbrand-Ribet Theorem, relating the $p$-divisibility of Bernoulli numbers with the Galois module structure of the $p$-part of $\text{Cl}(\mathbb{Q}(\zeta_p))$ (see \cite{ribet}). Results of Lang and Wake also use Eisenstein congruences to prove similar results about class numbers \cite{langwake}.

Varying the levels and characters of our modular forms produces even more congruences. This time the moduli arise from numerators of generalised Bernoulli numbers and special values of local Euler factors of Dirichlet $L$-functions. In the latter case such congruences are often referred to as being of ``local origin" (see \cite{mazur}, \cite{dummigan}, \cite{dummiganfretwell}, \cite{billereymenares}, \cite{billereymenares2}, \cite{spencer} and \cite{fretwellroberts}). These congruences are all linked to special cases of the Bloch-Kato Conjecture, a far reaching generalisation of the Herbrand-Ribet Theorem, the Analytic Class Number Formula and the Birch Swinnerton-Dyer Conjecture. This conjecture implies links between $p$-divisibility of special values of certain motivic $L$-functions and the Galois structure of $p$-parts of certain Bloch-Kato Selmer groups.

The theory of Eisenstein congruences extends naturally to automorphic forms for other reductive groups. Roughly speaking, if $G/\mathbb{Q}$ is a reductive group then one expects congruences between Hecke eigenvalues coming from cuspidal automorphic representations for $G(\mathbb{A}_{\mathbb{Q}})$ and automorphic representations that are parabolically induced from Levi subgroups of $G$. The moduli of such congruences are predicted to arise from special values of certain motivic $L$-functions (related to the particular Levi subgroup considered), in direct comparison with the Bloch-Kato conjecture. For detailed discussion of results and conjectures in this direction, see \cite{bergstromdummigan}. 

The special case $G = \text{GL}_2$ corresponds to the classical Eisenstein congruences mentioned above, whose relevant $L$-functions are Dirichlet $L$-functions (possibly incomplete, hence the appearance of local Euler factors). 

The special case $G = \text{GSp}_4$ corresponds to various congruences for Siegel modular forms. For example, in this case the conjectures imply congruences predicted by Harder in \cite{harder}. The relevant $L$-functions in this case are those attached to classical modular eigenforms. It is expected that proving such congruences should provide key insights into higher rank cases of the Bloch-Kato Conjecture.

In this paper we fix an arbitrary totally real field $F$ with ring of integers $\mathcal{O}$ and explore the existence of Eisenstein congruences in the case $G = \text{Res}_{F/\mathbb{Q}}(\text{GL}_2)$, i.e.\ congruences between Hilbert cusp forms and Hilbert Eisenstein series. The moduli are now explained by special values of Hecke $L$-functions attached to Hecke characters of $F$, as well as their Euler factors. Less results are known in this direction, due to the more complicated nature of the Hilbert modular varieties involved (e.g. the underlying structure of the cusps). The existence of Eisenstein congruences for parallel weight $\textbf{k} = 2$ Hilbert cuspforms is provided by the paper of Martin \cite{martin}, but the higher parallel weight case seems to be less well studied. Even calculating the constant terms of the associated Eisenstein series requires much more work than in the classical case. In Section $3$ we are able to give explicit formulae for constant terms of certain Hilbert Eisenstein series at arbitrary cusps, generalizing computations of Ozawa \cite{ozawa}. These formulae immediately reveal the desired connection with special values of Hecke $L$-functions and their Euler factors.

After reviewing the geometric language of Hilbert modular forms in Section $4$, we show in Section $5$ that our constant term formulae imply the existence of Eisenstein congruences for certain Hilbert eigenforms (see Theorem \ref{thm:existence}). Since the statement of this general result requires a significant amount of notation, we give the special case of narrow class number one and prime level as a taster.

\begin{thm}

Suppose that $F$ is a totally real field with ring of integers $\mathcal{O}$ and narrow class number $h^+ = 1$. Let $k>2$ be even and $\pi\in\mathcal{O}^+$ be a prime such that the level structure $U_1(\pi)$ is small enough (see Definition \ref{smallenough}). If $l > k+1$ is a rational prime such that $l\nmid N(\pi)$, $l$ is unramified in $\mathcal{O}$ and
\begin{equation*}
    \text{ord}_{l}(\zeta_F(1-k)(1-N(\pi)^k)) > 0
\end{equation*}
then there exists a normalised Hilbert eigenform $f \in S_{\textbf{k}}(\Gamma_{\mathcal{O}}(\pi))$ of parallel weight $\textbf{k} = (k,k,...,k)$ with Fourier coefficients $\{a(\nu)\}_{\nu\in\mathcal{O}^+}$, and a prime $\Lambda \mid l$ of $\mathcal{O}_f$ such that
\begin{equation*}
    a(q) \equiv 1+N(q)^{k-1} \ \textnormal{mod } \Lambda
\end{equation*} for all primes $q\in\mathcal{O}^+$ such that $q \nmid N(\pi)$.
\end{thm}

The precise definition of the spaces $S_{\textbf{k}}(\Gamma_{\mathcal{O}}(\pi))$ is given in Section $2$, but we simply note here that these spaces are direct analogues of spaces of classical modular forms $S_k(\Gamma_0(p))$ for rational prime $p$. It is immediate that the above congruences are Hilbert modular analogues of classical Eisenstein congruences. More generally, taking $F = \mathbb{Q}$ in our results immediately reveals all of the classical instances of Eisenstein congruences mentioned above (e.g. Ramanujan style congruences, including those of local origin). 

When the narrow class number is non-trivial we need to use the full Hecke module of Hilbert modular forms, i.e.\ $h^+$-tuples of functions, each transforming with respect to a different level structure (one for each narrow ideal class). One immediate consequence of this is that we must instead work with ideal-indexed Fourier coefficients, as defined by Shimura in \cite{shimura}.

A deeper follow-up question is to ask for necessary and sufficient conditions that determine the existence of Hilbert newforms satisfying the above congruence. We address this general question in Theorem \ref{thm:main}, giving the following companion result to Theorem $1.1$.

\begin{thm}
    Assume the setup of Theorem $1.1$. If there exists a newform $f\in S_{\textbf{k}}(\Gamma_{\mathcal{O}}(\pi))$ satisfying the congruence then the following conditions are met: 
    \begin{enumerate}
        \item $\text{ord}_{l}(\zeta_F(1-k)(1-N(\pi)^k)) > 0$,
        \item $l \mid (1-N(\pi)^k)(1-N(\pi)^{k-2})$.
    \end{enumerate}
Conversely, if $l$ satisfies these conditions and the further conditions:
\begin{enumerate}\setcounter{enumi}{2}
        \item $l \nmid N(\pi)+1$, 
        \item $l \nmid N(q)^{k-2}-1$, for some prime $q\in\mathcal{O}^+$ satisfying $q\nmid\pi$,
    \end{enumerate}
    then there exists a newform $f\in S_{\textbf{k}}(\Gamma_{\mathcal{O}}(\pi))$ satisfying the congruence.
\end{thm}
\setcounter{section}{1}

\section*{Acknowledgements}
We thank Siqi Yang for helpful suggestions and comments regarding the content of Section $4$ and Nicolas Billerey for helpful suggestions regarding the wider context of our results. We also thank an anonymous referee for helpful comments and suggestions concerning our results and the general exposition of the paper. This work forms part of the thesis of the second named author, and we are grateful to the Heilbronn Institute for support via their PhD Studentship program.

\section{Preliminaries}
In order to explore the Eisenstein congruences mentioned in the introduction we first recall the general definitions of Hilbert modular forms and their Fourier coefficients (due to Shimura).

\subsection{Hilbert modular forms}
Fix the following notation:
\begin{itemize}
\item{$F$ - a totally real field of degree $d$.}
\item{$\mathcal{O}$ - the ring of integers of $F$.}
\item{$\mathfrak{d}$ - the different ideal of $F$.}
\item{$I = \{\sigma_1,...,\sigma_d\}$ - the set of real embeddings of $F$ (with a fixed ordering).}
\item{$Cl_F, Cl_{F}^{+}$ - the class group and narrow class group of $F$ respectively.}
\item{$h, h^{+}$ - the class number and narrow class number of $F$ respectively.}
\item{$\mathcal{H}$ - the complex upper half plane.}
\item{$\text{GL}^{+}_2(F)$ - group of matrices in $\text{GL}_2(F)$ with totally positive determinant.}
\item{$\text{SL}_2(F)$ - group of matrices in $\text{GL}_2(F)$ with determinant $1$.}
\end{itemize}

Given $x,y\in F^{\times}$, $\textbf{z}\in\mathcal{H}^d$ and $\textbf{v}\in\mathbb{Z}^d$ we will write \[(x\textbf{z}+y)^{\textbf{v}} = \prod_{i=1}^d(\sigma_i(x)z_i+\sigma_i(y))^{v_i}\] and will make it clear what we mean when we abuse this notation. For example, we will also use half integral $v_i$ but only when $x=0$ and $y$ is totally positive, in which case positive square roots will be taken.

The group $\text{GL}^{+}_2(F)$ acts on $\mathcal{H}^d$ via M\"obius transformations; for $\gamma = \left(\begin{array}{cc}a & b\\ c & d\end{array}\right)\in \text{GL}^{+}_2(F)$ and $\textbf{z}\in\mathcal{H}^d$ we set \[\gamma\textbf{z} = \left(\frac{\sigma_i(a) z_i + \sigma_i(b)}{\sigma_i(c) z_i + \sigma_i(d)}\right)_{i\in\{1,2,...,d\}}.\] Fixing $\textbf{k}\in\mathbb{Z}^d$, this action induces the weight $\textbf{k}$ slash operator on functions $f:\mathcal{H}^d \rightarrow \mathbb{C}$; for $\gamma\in\text{GL}^{+}_2(F)$ and $\textbf{z}\in\mathcal{H}^d$ we set \[(f|_\textbf{k}\gamma)(\textbf{z}) = (\text{det}(\gamma))^{\frac{\textbf{k}}{2}}(c\textbf{z}+d)^{-\textbf{k}}f(\gamma\textbf{z}).\]

Let $\mathfrak{m}$ be an integral ideal of $\mathcal{O}$ and fix a narrow ray class character $\phi$ of $F$ of modulus $\mathfrak{m}$. It is known that there is a unique vector $\textbf{r}\in \{0,1\}^d$ satisfying $\phi(\alpha\mathcal{O}) = \text{sgn}(\alpha)^{\textbf{r}}$ for all $\alpha\in\mathcal{O}$ satisfying $\alpha\equiv 1\bmod \mathfrak{m}$ (this $\textbf{r}$ is the signature of $\phi$). One then gets a well defined numerical character $\phi_{\textbf{r}}: \left(\mathcal{O}/\mathfrak{m}\right)^{\times} \rightarrow \mathbb{C}^{\times}$ via $\phi_\textbf{r}(\alpha) = \phi(\alpha\mathcal{O})\text{sgn}(\alpha)^{\textbf{r}}$.

Let $\mathfrak{t}_1,...,\mathfrak{t}_{h^{+}}$ be representatives for $Cl^{+}_F$. Given a class $\lambda\in Cl^{+}_F$ we will write $t_{\lambda}$ for the corresponding representative. Later we will fix representatives for $Cl^{+}_F$ with certain properties but until then we allow arbitrary choices for the $\mathfrak{t}_i$.

The levels of our modular forms will be the following subgroups of $\text{GL}^{+}_2(F)$.

\begin{define}
Given $\lambda\in Cl^{+}_F$ and integral ideal $\mathfrak{m}$ we define: \[\Gamma_\lambda(\mathfrak{m}) = \left\{\left(\begin{array}{cc}a&b\\ c&d\end{array}\right)\in\text{GL}^{+}_2(F)\,\big|\, a,d\in\mathcal{O}, b\in(\mathfrak{dt}_{\lambda})^{-1}, c\in \mathfrak{mdt}_{\lambda}, ad-bc\in\mathcal{O}^{\times}\right\}.\]

We will also write $\Gamma^1_{\lambda}(\mathfrak{m}) = \Gamma_{\lambda}(\mathfrak{m})\cap \text{SL}_2(F)$.
\end{define}

These groups are Hilbert modular analogues of $\Gamma_0(N)$ in the classical setting.

\begin{define}
Let $\textbf{k}$, $\mathfrak{m}$, $\lambda$ and $\phi$ be as above. The space of weight $\textbf{k}$ Hilbert modular forms of level $\Gamma_\lambda(\mathfrak{m})$ and character $\phi$ is the space $M_{\textbf{k}}(\Gamma_{\lambda}(\mathfrak{m}),\phi)$ of holomorphic functions $f:\mathcal{H}^d \longrightarrow\mathbb{C}$ satisfying $f|_{\textbf{k}}\gamma = \phi_{\textbf{r}}(d)f$ for all $\gamma = \left(\begin{array}{cc}a&b\\ c&d\end{array}\right)\in\Gamma_{\lambda}(\mathfrak{m})$ (with the added assumption that if $F = \mathbb{Q}$ then $f$ is holomorphic at the cusps).
\end{define}

It is well known that $M_{\textbf{k}}(\Gamma_{\lambda}(\mathfrak{m}),\phi) = \{0\}$ if some $k_i < 0$ or if $k_j > k_i = 0$ for some $i\neq j$. We have that $M_{\textbf{0}}(\Gamma_{\lambda}(\mathfrak{m}),\phi) = \mathbb{C}$, the constant functions, and so we assume from now on that $\textbf{k}\in\mathbb{Z}^d_{> 0}$. We also assume that our character satisfies $\phi_{\textbf{r}}(\epsilon) = \text{sgn}(\epsilon)^{\textbf{k}}$ for each $\epsilon\in\mathcal{O}^{\times}$, otherwise again the space is trivial.

As in the classical case one can compactify $\mathcal{H}^d$ by adding cusps i.e. the projective line $\mathbb{P}^1(F)\hookrightarrow\mathbb{P}^1(\mathbb{R})^d$ (embedded via the real embeddings of $F$). It is then known that there is a decomposition: \[M_{\textbf{k}}(\Gamma_\lambda(\mathfrak{m}),\phi) = S_{\textbf{k}}(\Gamma_{\lambda}(\mathfrak{m}),\phi) \oplus E_{\textbf{k}}(\Gamma_{\lambda}(\mathfrak{m}),\phi),\] where $S_{\textbf{k}}(\Gamma_{\lambda}(\mathfrak{m}),\phi)$ is the subspace of cusp forms; forms vanishing at the cusps and $E_{\textbf{k}}(\Gamma_{\lambda}(\mathfrak{m}),\phi)$ is the Eisenstein subspace (to be defined later).

It is a simple fact that $E_{\textbf{k}}(\Gamma_{\lambda}(\mathfrak{m}),\phi) = \{0\}$ if $k_i \ne k_j$ for some $i\ne j$. Since our aim is to study Eisenstein congruences we will later make the restriction to parallel weight, i.e. $\textbf{k} = (k,k,...,k)$ with $k \geq 1$.

Let $f\in M_{\textbf{k}}(\Gamma_\lambda(\mathfrak{m}),\phi)$. Since $\left(\begin{array}{cc}1 & \mu \\ 0 & 1\end{array}\right)\in\Gamma_{\lambda}(\mathfrak{m})$ for all $\mu\in(\mathfrak{dt}_{\lambda})^{-1}$ we have that $f(\textbf{z}+\mu) = f(\textbf{z})$ for all $\textbf{z}\in\mathcal{H}^d$ and $\mu\in (\mathfrak{dt}_{\lambda})^{-1}$. Thus $f$ has a Fourier expansion of the form: \[f(\textbf{z}) = a_\lambda(0) + \sum_{\nu\in \mathfrak{t}_{\lambda}\cap F^{+}} a_{\lambda}(\nu) e^{2\pi i\text{Tr}(\nu\textbf{z})},\] since the dual lattice of $(\mathfrak{dt}_{\lambda})^{-1}$ with respect to the trace form is $\mathfrak{t}_{\lambda}$ and holomorphy of $f$ translates into totally positive $\nu$ (here $\text{Tr}(\nu\textbf{z}) = \sum_{i=1}^d \sigma_i(\nu)z_i$). The value $a_\lambda(0)$ is the value of $f$ at the cusp $\infty = [0 : 1]\in\mathbb{P}^1(F)$.

The definition of $M_{\textbf{k}}(\Gamma_{\lambda}(\mathfrak{m}),\phi)$ depends on the choice of representative $\mathfrak{t}_{\lambda}$ for $\lambda\in Cl_F^{+}$. However if $\mathfrak{t}_{\lambda}' = \langle \alpha\rangle \mathfrak{t}_{\lambda}$ for some $\alpha\in F^+$ then one gets an isomorphism between the corresponding spaces of modular forms by sending $f$ to $f|_{\textbf{k}}\text{diag}(\alpha,1)$. This map doesn't preserve Fourier coefficients but later we will define normalised coefficients that are well defined, i.e. independent of the choices for the $\mathfrak{t}_{\lambda}$.

One would like to define Hecke operators on the spaces $M_{\textbf{k}}(\Gamma_\lambda(\mathfrak{m}),\phi)$, as in the classical setting. However these spaces are not closed under the action of the usual double coset operators. To get the full Hecke module one must consider, as Shimura does, all narrow ideal classes at once.

\begin{define}
The space of weight $\textbf{k}$ Hilbert modular forms of level $\mathfrak{m}$ and character $\phi$ with respect to representatives $\mathfrak{t}_{\lambda}$ is the space: \[M_{\textbf{k}}(\mathfrak{m},\phi) = \prod_{\lambda\in Cl^{+}_F} M_{\textbf{k}}(\Gamma_{\lambda}(\mathfrak{m}),\phi).\]
\end{define}

Thus such a Hilbert modular form is a $h^+$-tuple $f = (f_{\lambda})$, such that each $f_{\lambda}\in M_{\textbf{k}}(\Gamma_{\lambda}(\mathfrak{m}),\phi)$. It can be shown that such tuples give rise naturally to adelic automorphic forms for $\text{Res}_{F/\mathbb{Q}}(\text{GL}_2)$ but this will not be important to us until later.

The weight $\textbf{k}$ slash operator naturally extends to $M_{\textbf{k}}(\mathfrak{m},\phi)$. Given a $h^{+}$ tuple of matrices $\gamma = (\gamma_{\lambda})$ with $\gamma_{\lambda}\in \text{GL}^{+}_2(F)$ and $f = (f_{\lambda})\in M_{\textbf{k}}(\mathfrak{m},\phi)$ we define $f|_{\textbf{k}}\gamma = (f_{\lambda}|_{\textbf{k}}\gamma_{\lambda})$. 

One has a natural decomposition: \[M_{\textbf{k}}(\mathfrak{m},\phi) = S_{\textbf{k}}(\mathfrak{m},\phi) \oplus E_{\textbf{k}}(\mathfrak{m},\phi),\] where each summand is the product of the corresponding cuspidal and Eisenstein spaces respectively.

Let $f = (f_{\lambda}) \in M_{\textbf{k}}(\mathfrak{m},\phi)$ and $k_0 = \text{max}\{k_1,...,k_d\}$. Then $f$ has an associated $h^+$-tuple of Fourier expansions, giving a set of Fourier coefficients indexed by the elements of $F^+$. However these Fourier coefficients depend on the choice of representatives $\mathfrak{t}_\lambda$ for $Cl(F)^+$. To fix this we consider instead normalized coefficients, indexed by integral ideals of $\mathcal{O}$, as constructed by Shimura (see equations $(2.19)$ and $(2.24)$ of \cite{shimura}). For each non-zero ideal $\mathfrak{n}$ of $\mathcal{O}$ we may write $\mathfrak{n} = \langle b\rangle \mathfrak{t}_{\lambda}^{-1}$ for a unique $\lambda\in Cl_F^+$ and $b \in \mathfrak{t}_{\lambda} \cap F^+$ (unique up to multiplication by totally positive units).

\begin{define}
For each $\lambda\in Cl_F^+$ the constant term of $f$ associated to $\lambda$ at $\infty$ is $c_{\lambda}(0,f) = a_\lambda(0)N(\mathfrak{t}_{\lambda})^{-\frac{k_0}{2}}$. For each non-zero integral ideal $\mathfrak{n}$ the $\mathfrak{n}$th Fourier coefficient of $f$ is $c(\mathfrak{n},f) = a_{\lambda}(b)b^{\frac{\textbf{k}_0 - \textbf{k}}{2}}N(\mathfrak{t}_{\lambda})^{-\frac{k_0}{2}}$ (where $\textbf{k}_0 = (k_0,k_0,...,k_0)$).
\end{define}

It is a simple consequence of modularity that each $c_{\lambda}(0,f)$ is independent of the choice of representatives $\mathfrak{t}_{\lambda}$. The coefficients $c(\mathfrak{n},f)$ are also independent of this choice and also of the choice of $b\in \mathfrak{t}_{\lambda}\cap F^+$. Since the Eisenstein congruences we wish to prove concern only the coefficients $c(\mathfrak{n},f)$ for $f\in S_{\textbf{k}}(\mathfrak{m},\phi)$ and not the original Fourier coefficients we may fix choices for the $\mathfrak{t}_{\lambda}$ from now on without loss of generality.

Note that in the parallel weight case $\textbf{k} = (k,k,...,k)$ the above formulae simplify to $c_{\lambda}(0,f) = a_{\lambda}(0)N(\mathfrak{t}_{\lambda})^{-\frac{k}{2}}$ and $c(\mathfrak{n},f) = a_{\lambda}(b)N(\mathfrak{t}_{\lambda})^{-\frac{k}{2}}$ respectively.

One can lift the level of Hilbert modular forms as in the classical case. Given an integral ideal $\mathfrak{r}$ and $f = (f_{\lambda})\in M_{\textbf{k}}(\mathfrak{m},\phi)$ we can produce $f^{(\mathfrak{r})} = (f^{(\mathfrak{r})}_{\lambda})\in M_{\textbf{k}}(\mathfrak{mr},\phi)$ as follows. For each $\lambda\in Cl_F^+$ choose $a_{\mu}\in F^+$ such that $\mathfrak{rt}_{\lambda} = a_{\mu}\mathfrak{t}_{\mu}$. Then define $f^{(\mathfrak{r})}_{\lambda} = N(\mathfrak{r})^{-\frac{k_0}{2}}f_{\mu}|_{\textbf{k}}\left(\begin{array}{cc}a_{\mu} & 0\\ 0 & 1\end{array}\right)$.

Explicitly: \[f^{(\mathfrak{r})}_{\lambda}(\textbf{z}) = \frac{a_{\mu}^{\frac{\textbf{k}}{2}}}{N(\mathfrak{r})^{\frac{k_0}{2}}} f_{\mu}(a_{\mu}\textbf{z}),\] where $a_{\mu}\textbf{z} = (\sigma_i(a_{\mu})z_i)_{i\in \{1,2,...,d\}}$. In the parallel weight case $\textbf{k} = (k,k,...,k)$ this simplifies to $f^{(\mathfrak{r})}_{\lambda}(\textbf{z}) = \frac{N(a_{\mu})^{\frac{k}{2}}}{N(\mathfrak{r})^{\frac{k}{2}}}f_{\mu}(a_{\mu}\textbf{z}) = \frac{N(\mathfrak{t}_{\lambda})^{\frac{k}{2}}}{N(\mathfrak{t}_{\mu})^{\frac{k}{2}}}f_{\mu}(a_{\mu}\textbf{z})$.

The reason for the normalization factor in the definition of $f^{(\mathfrak{r})}_{\lambda}$ is to guarantee that $c(\mathfrak{n},f^{(\mathfrak{r})}) = c(\mathfrak{nr}^{-1},f)$ for each integral ideal $\mathfrak{n}$ (where it is understood that $c(\mathfrak{nr}^{-1},f) = 0$ whenever $\mathfrak{r}\nmid\mathfrak{n}$).

From here one can define a newform theory for Hilbert cusp forms, in direct analogue with the classical theory. However we will only need this for Hilbert Eisenstein series so will defer this to the next subsection.

The space $M_{\textbf{k}}(\mathfrak{m},\phi)$ comes fully equipped with the action of Hecke operators $T_{\mathfrak{n}}$, indexed by integral ideals $\mathfrak{n}$ of $\mathcal{O}$. The adelic definition of $T_{\mathfrak{n}}$ can be found in equation $(2.11)$ of \cite{shimura} and we do not give this here, however the action on Fourier coefficients can be written down explicitly. Given $f\in M_{\textbf{k}}(\mathfrak{m},\phi)$ and a non-zero integral ideal $\mathfrak{n}$ we have: \[c(\mathfrak{m},T(\mathfrak{n})(f)) = \sum_{\mathfrak{m}+\mathfrak{n} \subseteq \mathfrak{a}} \phi(\mathfrak{a})N(\mathfrak{a})^{k_0-1}c(\mathfrak{mna}^{-2},f).\]

It is known that the spaces $S_{\textbf{k}}(\mathfrak{m},\phi)$ and $E_{\textbf{k}}(\mathfrak{m},\phi)$ are preserved under the action of the Hecke operators and each space possesses a basis of eigenforms for the Hecke operators $T_{\mathfrak{n}}$ with $\mathfrak{n}$ coprime with $\mathfrak{m}$. The eigenvalues of these operators are known to be algebraic (and in the parallel weight case they are in fact algebraic integers). We may normalize an eigenform $f\in M_{\textbf{k}}(\mathfrak{m},\phi)$ by scaling so that it satisfies $c(\mathcal{O},f) = 1$. For such a normalized eigenform it is true that $T_{\mathfrak{n}}(f) = c(\mathfrak{n},f)f$ for each integral $\mathfrak{n}$ coprime with $\mathfrak{m}$.

\subsection{Hilbert Eisenstein series}

We wish to prove congruences between specific cusp forms and Hilbert Eisenstein series. In this section we recall the structure of the Eisenstein subspace $E_{\textbf{k}}(\mathfrak{m},\phi)\subseteq M_{\textbf{k}}(\mathfrak{m},\phi)$ for parallel weight $\textbf{k} = (k,k,...,k)$, its subspace of newforms and prove results about the constant terms of Eisenstein newforms under level raise by $\mathfrak{p}$ (for a prime ideal $\mathfrak{p}\nmid\mathfrak{m}$).

Let $\eta,\psi$ be narrow ray class characters of $F$ with moduli $\mathfrak{a},\mathfrak{b}$ and signatures $\textbf{q},\textbf{r}\in\{0,1\}^d$ respectively. Suppose $\phi = \eta\psi$ (considered as a ray class character modulo $\mathfrak{m} = \mathfrak{a}\mathfrak{b}$) and assume that $\textbf{q}+\textbf{r} \equiv \textbf{k} \mod (2\mathbb{Z})^d$. Then Proposition $3.4$ of \cite{shimura} guarantees the existence of a certain Eisenstein series attached to this data.

\begin{prop}
For each integer $k \ge 2$ there exists $E_{k}(\eta,\psi)\in E_{\textbf{k}}(\mathfrak{m},\phi)$ with normalized coefficients satisfying:
\begin{enumerate}
\item{$c(\mathfrak{n},E_{k}(\eta,\psi)) = \sum_{\mathfrak{n}_1\mid\mathfrak{n}}\eta(\frac{\mathfrak{n}}{\mathfrak{n}_1})\psi(\mathfrak{n_1})N(\mathfrak{n}_1)^{k-1}$ for each non-zero integral $\mathfrak{n}$,}
\item{$c_\lambda(0,E_k(\eta,\psi)) = \delta_{\mathfrak{a},\mathcal{O}}2^{-d}\eta^{-1}(\mathfrak{t}_{\lambda})L(\eta^{-1}\psi,1-k)$.}
\end{enumerate}
\end{prop}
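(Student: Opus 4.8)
The plan is to derive both formulae from Shimura's explicit construction. Existence of $E_k(\eta,\psi)$ in $E_{\textbf{k}}(\mathfrak{m},\phi)$ is precisely the content of Proposition $3.4$ of \cite{shimura}, so the real work is to unwind Shimura's normalizations into the ideal-indexed coefficients $c(\mathfrak{n},\cdot)$ and $c_\lambda(0,\cdot)$ defined above. Recall that Shimura builds these series (for $k\ge 2$ directly as an absolutely convergent series, and for $k=1$ by Hecke's analytic continuation of a nonholomorphic Eisenstein series) as an $h^+$-tuple $(E_\lambda)$ whose $\lambda$-component has a Fourier expansion over $\mathfrak{t}_\lambda\cap F^+$ with coefficients and constant term expressed through partial Hecke $L$-series attached to $\eta$ and $\psi$. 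Throughout I would keep track of the signature condition $\textbf{q}+\textbf{r}\equiv\textbf{k}\bmod(2\mathbb{Z})^d$, which is exactly what makes the slash-invariance $E_\lambda|_{\textbf{k}}\gamma=\phi_{\textbf{r}}(d)E_\lambda$ hold and hence places the series in $E_{\textbf{k}}(\mathfrak{m},\phi)$.

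For part (1), I would start from Shimura's formula for the coefficient $a_\lambda(\nu)$ attached to a totally positive $\nu$ and apply the definition $c(\mathfrak{n},f)=a_\lambda(b)N(\mathfrak{t}_\lambda)^{-k/2}$ with $\mathfrak{n}=\langle b\rangle\mathfrak{t}_\lambda^{-1}$. The key point is that the dependence of $a_\lambda(\nu)$ on $\nu$ and on the representative $\mathfrak{t}_\lambda$ is precisely cancelled by the factor $N(\mathfrak{t}_\lambda)^{-k/2}$ together with the twist by $\eta$, leaving a quantity depending only on the ideal $\mathfrak{n}$. After this cancellation the coefficient collapses to the divisor sum $\sum_{\mathfrak{n}_1\mid\mathfrak{n}}\eta(\mathfrak{n}/\mathfrak{n}_1)\psi(\mathfrak{n}_1)N(\mathfrak{n}_1)^{k-1}$; equivalently one checks that the attached $L$-function factors as $L(\eta,s)L(\psi,s-k+1)$ and reads off the Dirichlet coefficients by multiplicativity. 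This step is essentially bookkeeping once the conventions are aligned.

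For part (2), the substance of the proof, I would isolate the $\nu=0$ term of each component $E_\lambda$. Shimura expresses this constant term through a special value of a Hecke $L$-function, and the factor $N(\mathfrak{t}_\lambda)^{-k/2}$ in the definition of $c_\lambda(0,f)$ converts his $a_\lambda(0)$ into the stated $2^{-d}\eta^{-1}(\mathfrak{t}_\lambda)L(\eta^{-1}\psi,1-k)$. The ideal-class twist $\eta^{-1}(\mathfrak{t}_\lambda)$ records how the partial $L$-series attached to the class $\lambda$ reassemble into the full $L(\eta^{-1}\psi,s)$, while the $2^{-d}$ is an archimedean normalization contributing one factor of $2^{-1}$ at each of the $d$ infinite places (from the $\Gamma$-factors and the square-root conventions for half-integral exponents). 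The indicator $\delta_{\mathfrak{a},\mathcal{O}}$ appears because the constant term at $\infty$ survives only when $\eta$ is unramified, i.e.\ has conductor $\mathcal{O}$: when $\mathfrak{a}\neq\mathcal{O}$ the relevant Gauss-sum contribution vanishes.

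The main obstacle is the $k=1$ case together with the precise determination of the constants $2^{-d}$ and $\eta^{-1}(\mathfrak{t}_\lambda)$. For $k\ge 2$ the Fourier series converges and the constant term is a single special value, but at $k=1$ one must pass through Hecke's nonholomorphic Eisenstein series and its analytic continuation; the functional equation of the Hecke $L$-function then symmetrizes the roles of $\eta$ and $\psi$, producing the two-term expression $2^{-d}(\delta_{\mathfrak{a},\mathcal{O}}\eta^{-1}(\mathfrak{t}_\lambda)L(\eta^{-1}\psi,0)+\delta_{\mathfrak{b},\mathcal{O}}\psi^{-1}(\mathfrak{t}_\lambda)L(\psi^{-1}\eta,0))$. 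Getting both $\delta$-indicators and both class-twists correct, and verifying that no spurious factors of $2$ or powers of $N(\mathfrak{t}_\lambda)$ survive after converting to normalized coefficients, is what requires the most care; I would cross-check the final constants against the classical case $F=\mathbb{Q}$, where $E_k(\eta,\psi)$ reduces to the familiar weight-$k$ Eisenstein series with constant term $-B_{k,\eta^{-1}\psi}/2k$, to pin down all normalizations.
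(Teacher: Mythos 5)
Your proposal is correct and takes essentially the same route as the paper: the paper offers no independent proof of this proposition, deducing it directly from Proposition 3.4 of Shimura \cite{shimura}, which is exactly the result you invoke for existence and whose normalizations you then unwind into the ideal-indexed coefficients. Your supporting checks (the factorization $L(\eta,s)L(\psi,s-k+1)$ behind part (1), the vanishing of the constant term unless $\eta$ is unramified, the two-term $k=1$ expression from the functional equation, and the cross-check against $F=\mathbb{Q}$ where $2^{-1}L(\eta^{-1}\psi,1-k)=-B_{k,\eta^{-1}\psi}/2k$) are all consistent with Shimura's statement and conventions.
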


Note that each $c(\mathfrak{n},E_k(\eta,\psi))\in \mathbb{Z}[\eta,\psi]$ is an algebraic integer and each $c_{\lambda}(0,E_k(\eta,\psi))\in\mathbb{Q}(\eta,\psi)$ is algebraic. 

The definition of $E_k(\eta,\psi) = (E_k(\eta,\psi)_{\lambda})_\lambda$ is as follows. For $\textbf{z}\in\mathcal{H}^d$ and $s\in \mathbb{C}$ consider the following series, convergent in the region $\text{Re}(k+2s)>2$: \[E_k(\eta,\psi)_{\lambda}(\textbf{z},s) = \frac{C\tau(\psi)}{N(\mathfrak{t}_{\lambda})^{\frac{k}{2}}N(\mathfrak{b})}\sum_{\mathfrak{c}\in\text{Cl}_F}N(\mathfrak{c})^k\sum_{\substack{(a,b)\in S_{\lambda,\mathfrak{c}}/U \\ (a,b) \ne (0,0)}}\frac{\text{sgn}(a)^{\textbf{q}}\eta(a\mathfrak{c}^{-1})\text{sgn}(-b)^{\textbf{r}}\psi^{-1}(-b\mathfrak{bdt}_{\lambda}\mathfrak{c}^{-1})}{(a\textbf{z}+b)^{\textbf{k}} |a\textbf{z}+b|^{2s}}.\]

Here:

\begin{itemize}
\item{$\tau(\psi) = \sum_{x \in (\mathfrak{bd})^{-1}/\mathfrak{d}^{-1}}\text{sgn}(x)^{\textbf{r}}\psi(x\mathfrak{bd})e^{2\pi i \text{Tr}(x)}$ is a Gauss sum,}
\item{$U = \{u\in\mathcal{O}^{\times}\,|\,N(u)^k = 1, u \equiv 1 \mod \mathfrak{m}\}$,}
\item{$C = \frac{\sqrt{\Delta_F}\Gamma(k)^g}{[\mathcal{O}^{\times}:U]N(\mathfrak{d})(-2\pi i)^{kg}}$,}
\item{$S_{\lambda,\mathfrak{c}} = \{(a,b)\,|\,a\in\mathfrak{c}, b\in (\mathfrak{bdt}_{\lambda})^{-1}\mathfrak{c}\}$ with $U$ acting by $u(a,b) = (ua,ub)$.}
\end{itemize}

For $k>2$ we set $E_k(\eta,\psi)_{\lambda}(\textbf{z}) = E_k(\eta,\psi)_{\lambda}(\textbf{z},0)$, an absolutely convergent sum. For $k=1,2$ we have to be more careful. It is possible to meromorphically continue $E_k(\eta,\psi)_\lambda(\textbf{z},s)$ in the $s$ variable to the whole complex plane, giving a function that is holomorphic at $s=0$. For these weights one then uses the above definition but for this meromorphic continuation.

The spaces $E_{\textbf{k}}(\mathfrak{m},\phi)$ come equipped with a newform theory. The following definition is due to Wiles, extending the classical definition due to Weisinger in his thesis (see \cite{weisinger} and \cite{wiles}). 

\begin{define}
We say that $E_k(\eta,\psi)$ is a newform if $\eta, \psi$ are primitive and $\text{cond}(\eta)\text{cond}(\psi) = \mathfrak{m}$.
\end{define}

Thus the newforms in $E_{\textbf{k}}(\mathfrak{m},\phi)$ are parametrized by the set of triples $(\eta,\psi,\mathfrak{m})$ satisfying $\eta\psi = \phi$ with $\eta,\psi$ primitive and $\text{cond}(\eta)\text{cond}(\psi) = \mathfrak{m}$. We denote by $E^{\text{new}}_{\textbf{k}}(\mathfrak{m},\phi)$ the subspace of $E_{\textbf{k}}(\mathfrak{m},\phi)$ generated by newforms.

Proposition 3.11 of \cite{atwill} shows that the following familiar decomposition holds for $E_{\textbf{k}}(\mathfrak{m},\phi)$. 

\begin{prop}\[E_{\textbf{k}}(\mathfrak{m},\phi) = \bigoplus_{\mathfrak{f}_{\phi}\mid \mathfrak{m}_0\mid \mathfrak{m}} \,\,\,\,\bigoplus_{\mathfrak{r}\mid \mathfrak{m}\mathfrak{m}_0^{-1}} E_{\textbf{k}}^{\text{new},(\mathfrak{r})}(\mathfrak{m}_0,\phi),\] where $\mathfrak{f}_{\phi}$ is the conductor of $\phi$ and $E_{\textbf{k}}^{\text{new},(\mathfrak{r})}(\mathfrak{m}_0,\phi)$ is the space spanned by $f^{(\mathfrak{r})}$ for all newforms $f\in E_{\textbf{k}}(\mathfrak{m}_0,\phi)$.\end{prop}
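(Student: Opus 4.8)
The plan is to realize the right-hand side as a linearly independent family that exhausts $E_{\textbf{k}}(\mathfrak{m},\phi)$, splitting the proof into three parts: a Hecke-eigenvalue argument that separates the blocks indexed by distinct newform data, a shift argument on Fourier coefficients that handles the level-raises $f^{(\mathfrak{r})}$ inside a single block, and finally a dimension (cusp) count that upgrades linear independence to spanning. Throughout I would use that each $E_k(\eta,\psi)$ with $\eta,\psi$ primitive is a normalized eigenform: the earlier coefficient formula gives $c(\mathcal{O},E_k(\eta,\psi))=1$, and for a prime $\mathfrak{q}\nmid\mathfrak{m}$ only the divisors $\mathcal{O},\mathfrak{q}$ contribute, so
\[
c(\mathfrak{q},E_k(\eta,\psi)) = \eta(\mathfrak{q}) + \psi(\mathfrak{q})N(\mathfrak{q})^{k-1}.
\]
Since $c(\mathfrak{n},f^{(\mathfrak{r})})=c(\mathfrak{n}\mathfrak{r}^{-1},f)$ and level-raising commutes with $T_{\mathfrak{q}}$ for $\mathfrak{q}\nmid\mathfrak{m}$, every element of $E_{\textbf{k}}^{\text{new},(\mathfrak{r})}(\mathfrak{m}_0,\phi)$ is a $T_{\mathfrak{q}}$-eigenvector with the same eigenvalue as its underlying newform, so the whole block $\bigoplus_{\mathfrak{r}}E_{\textbf{k}}^{\text{new},(\mathfrak{r})}(\mathfrak{m}_0,\phi)$ lies in one joint eigenspace of $\{T_{\mathfrak{q}}\}_{\mathfrak{q}\nmid\mathfrak{m}}$.

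To separate distinct blocks I would note that for $\mathfrak{q}\nmid\mathfrak{m}$ the product $\eta(\mathfrak{q})\cdot\psi(\mathfrak{q})N(\mathfrak{q})^{k-1}=\phi(\mathfrak{q})N(\mathfrak{q})^{k-1}$ is fixed by the common nebentypus, while the sum is the eigenvalue above; hence the eigensystem determines the unordered pair $\{\eta(\mathfrak{q}),\psi(\mathfrak{q})N(\mathfrak{q})^{k-1}\}$. As $|\eta(\mathfrak{q})|=1<N(\mathfrak{q})^{k-1}=|\psi(\mathfrak{q})N(\mathfrak{q})^{k-1}|$ for $k\ge2$, the individual values $\eta(\mathfrak{q}),\psi(\mathfrak{q})$ are recovered, and a narrow ray class character is determined by its values on primes coprime to its modulus, so $(\eta,\psi)$ and thus $\mathfrak{m}_0=\text{cond}(\eta)\text{cond}(\psi)$ are pinned down. (For $k=1$ only the unordered pair is recovered, matching the symmetry $E_1(\eta,\psi)=E_1(\psi,\eta)$ already built into the newform count.) Therefore different newform data occupy different eigenspaces, so any relation among the right-hand summands decouples into separate relations inside single blocks. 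Within a block a relation $\sum_{\mathfrak{r}}a_{\mathfrak{r}}E_k(\eta,\psi)^{(\mathfrak{r})}=0$ forces all $a_{\mathfrak{r}}=0$: taking the minimal-norm $\mathfrak{r}_0$ in the support and reading off the $\mathfrak{r}_0$-th coefficient, the relation $c(\mathfrak{n},f^{(\mathfrak{r})})=c(\mathfrak{n}\mathfrak{r}^{-1},f)$ kills every term except $\mathfrak{r}=\mathfrak{r}_0$, leaving $a_{\mathfrak{r}_0}c(\mathcal{O},E_k(\eta,\psi))=a_{\mathfrak{r}_0}=0$, a contradiction. This establishes that the sum on the right is direct.

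It remains to prove spanning, for which I would compare cardinalities. The number of indexing data $(\mathfrak{m}_0,\eta,\psi,\mathfrak{r})$ with $\mathfrak{f}_\phi\mid\mathfrak{m}_0\mid\mathfrak{m}$, with $\eta,\psi$ primitive satisfying $\eta\psi=\phi$ and $\text{cond}(\eta)\text{cond}(\psi)=\mathfrak{m}_0$, and with $\mathfrak{r}\mid\mathfrak{m}\mathfrak{m}_0^{-1}$, must equal $\dim_{\mathbb{C}}E_{\textbf{k}}(\mathfrak{m},\phi)$. The latter is read off from the boundary: the constant-term map $E_{\textbf{k}}(\mathfrak{m},\phi)\to\bigoplus_{\text{cusps}}\mathbb{C}$ is injective on the Eisenstein subspace, and the cusps of the $\Gamma_\lambda(\mathfrak{m})$, aggregated over the narrow class group, yield the dimension. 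Invoking Shimura's and Atwill's description of this cusp structure, one reduces spanning to a purely combinatorial identity of the two counts, which is multiplicative over primes and can be checked factor by factor in $\mathfrak{m}$.

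The main obstacle is precisely this last matching of the explicit Hilbert Eisenstein dimension against the newform count. Unlike the rational case, the cusps are governed by the narrow class group and the normalizations $N(\mathfrak{t}_\lambda)^{-k/2}$, so the bookkeeping is substantially heavier, and the weights $k=1,2$ demand separate treatment: there $E_k(\eta,\psi)_\lambda(\textbf{z},s)$ is holomorphic only after meromorphic continuation, the $\eta\leftrightarrow\psi$ symmetry collapses part of the parametrization, and certain formal Eisenstein series fail to be holomorphic and must be removed before the two counts can agree. For the parallel weight $k>2$ regime of this paper these pathologies are absent, and the identification of the two cardinalities is the decisive step.
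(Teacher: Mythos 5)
Your proposal proves the easy half of the statement and leaves the hard half as an assertion. The directness argument is sound: the eigenvalue formula $c(\mathfrak{q},E_k(\eta,\psi))=\eta(\mathfrak{q})+\psi(\mathfrak{q})N(\mathfrak{q})^{k-1}$, the recovery of the unordered pair $\{\eta(\mathfrak{q}),\psi(\mathfrak{q})N(\mathfrak{q})^{k-1}\}$ from the eigenvalue and the nebentypus, the separation of the two entries by absolute value when $k\geq 2$, and the minimal-norm coefficient extraction that forces $a_{\mathfrak{r}_0}=0$ inside a single block are all correct. But for spanning you write that one ``reduces spanning to a purely combinatorial identity of the two counts, which is multiplicative over primes and can be checked factor by factor'' --- that identity is never stated, let alone verified, and it is precisely the content of the proposition. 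Moreover, the reduction itself is flawed as stated: for a \emph{fixed} character $\phi$, $\dim E_{\textbf{k}}(\mathfrak{m},\phi)$ is not the number of cusps of the varieties $\Gamma_{\lambda}(\mathfrak{m})\backslash\mathcal{H}^d$ aggregated over $Cl_F^+$. The constant-term map is indeed injective on the Eisenstein subspace, but its image is only the $\phi$-isotypic piece of the space spanned by the cusps under the diamond/unit action, and identifying that piece --- and showing enough Eisenstein series exist to fill it --- is exactly where the Hilbert-specific obstructions enter (the narrow class group indexing the cusps, the unit group, the compatibility condition $\phi_{\textbf{r}}(\epsilon)=\text{sgn}(\epsilon)^{\textbf{k}}$). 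For the same reason, naive multiplicativity over the primes dividing $\mathfrak{m}$ cannot be taken for granted here, since the cusp count is not governed by divisors of $\mathfrak{m}$ alone as it is over $\mathbb{Q}$.

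For comparison: the paper does not prove this proposition at all; it quotes it directly as Proposition 3.11 of Atwill--Linowitz \cite{atwill}. So your attempt supplies a genuine proof of linear independence together with an unproven sketch of surjectivity. To close the gap along your lines you would need either (i) the structural input that the Eisenstein series $E_k(\eta,\psi)$ attached to \emph{all} (possibly imprimitive) pairs with $\eta\psi=\phi$ and product of moduli dividing $\mathfrak{m}$ span $E_{\textbf{k}}(\mathfrak{m},\phi)$, combined with the explicit identity expressing each imprimitive $E_k(\eta,\psi)$ as a linear combination of level raises $E_k(\eta_0,\psi_0)^{(\mathfrak{r})}$ of the associated primitive series (this is the route taken in the newform-theory literature), or (ii) an actual computation of the $\phi$-part of the cusp count matched against the number of quadruples $(\mathfrak{m}_0,\eta,\psi,\mathfrak{r})$. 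Either way, the step you have labelled as decisive is the one your proposal does not carry out.
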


It is a well known fact that $T_{\mathfrak{q}}(E_k(\eta,\psi)) = c(\mathfrak{q},E_k(\eta,\psi))E_k(\eta,\psi)$ for any prime $\mathfrak{q}\nmid \mathfrak{m}$ and so the above is naturally a decomposition into eigenspaces (for the Hecke operators away from the level $\mathfrak{m}$). Multiplicity one is known for the space $E_{\textbf{k}}^{\text{new}}(\mathfrak{m},\phi)$, see \cite{atwill} for a stronger version that holds for the other spaces in this decomposition (using extra operators at primes dividing the level).

\section{Constant term formulae} \label{sec:constantterm}

A vital step in proving Theorem \ref{thm:main} is to understand the constant terms of Eisenstein newforms, and their level raise, at all cusps. To do this we generalise a constant term formula of Ozawa, which we now briefly discuss along with the cusps of Hilbert modular varieties.

When $F=\mathbb{Q}$ we calculate the constant term of a modular form $f\in M_k(\Gamma_0(N),\phi)$ at the cusp $x = [\frac{\alpha}{\gamma}:1]\in\mathbb{P}^1(\mathbb{Q})$ (with $\alpha,\gamma\in\mathbb{Z}$ and $\text{gcd}(\alpha,\gamma) = 1$) by first constructing $A \in\text{SL}_2(\mathbb{Q})$ with first column $(\alpha,\gamma)^T$. Then $A(\infty) = \frac{\alpha}{\gamma}$ and so the constant term of $f$ at $x$ is given by the constant term of $f|A$ at $\infty$. Such $A$'s are unique up to right multiplication by elements of the Borel subgroup $B^1(\mathbb{Q}) = \{A\in\text{SL}_2(\mathbb{Q})\,|\, A_{2,1} = 0\}$ (i.e. the stabilizer of $\infty$). It is always possible to find such an $A\in\text{SL}_2(\mathbb{Z})$ since $\alpha\mathbb{Z}+\gamma\mathbb{Z} = \mathbb{Z}$, guaranteeing the existence of $\beta,\delta\in\mathbb{Z}$ satisfying $\alpha\delta-\beta\gamma=1$. 

Formally we have that $\text{SL}_2(\mathbb{Q})/B^1(\mathbb{Q}) \cong \mathbb{P}^1(\mathbb{Q})$ via the map $A \mapsto A(\infty)$, giving the bijection $\text{SL}_2(\mathbb{Z})\backslash\text{SL}_2(\mathbb{Q})/B^1(\mathbb{Q}) \rightarrow \text{SL}_2(\mathbb{Z})\backslash\mathbb{P}^1(\mathbb{Q})$. The triviality of the RHS (i.e. that every cusp is $\text{SL}_2(\mathbb{Z})$-equivalent to $\infty$ rather than just $\text{SL}_2(\mathbb{Q})$-equivalent) is then equivalent to the LHS being trivial.

For a general totally real $F$ we try to do the same. Let $f = (f_{\lambda})\in M_{\textbf{k}}(\mathfrak{m},\phi)$. To find the constant term of $f_{\lambda}$ at the cusp $x = [\frac{\alpha}{\gamma}:1]\in\mathbb{P}^1(F)$ (with $\alpha,\gamma\in\mathcal{O}$ and $\alpha\mathcal{O},\gamma\mathcal{O}$ coprime) we might hope that $x$ is $\Gamma_{\lambda}^1(\mathcal{O})$-equivalent to $\infty$. Unfortunately this is not true in general. To explain why, consider the question of trying to construct $A_{\lambda} \in\Gamma_{\lambda}^1(\mathcal{O})$ with first column $(\alpha,\gamma)^T$. We would need to solve the equation $\alpha\delta - \beta\gamma = 1$ with $\delta\in\mathcal{O}$ and $\beta\in(\mathfrak{dt}_{\lambda})^{-1}$. However it is not true in general that $\alpha\mathcal{O} + \gamma(\mathfrak{dt}_{\lambda})^{-1} = \mathcal{O}$, the class group $Cl_F$ provides an obstruction.

Formally we still have that $\text{SL}_2(F)/B^1(F) \cong \mathbb{P}^1(F)$ but it is not true that $\Gamma_{\lambda}^1(\mathcal{O})\backslash\text{SL}_2(F)/B^1(F)$ is trivial (i.e. that $\Gamma_{\lambda}^1(\mathcal{O})\backslash\mathbb{P}^1(F)$ is trivial). Proposition $3.8$ of \cite{ozawa} shows that the class group obstruction is the only one.

\begin{prop}
The following map is a bijection: \begin{align*}\Gamma_{\lambda}^1(\mathcal{O})\backslash\text{SL}_2(F)/B^1(F) &\longrightarrow Cl_F \\ \left[\left(\begin{array}{cc}a & b\\ c & d\end{array}\right)\right] &\longmapsto [a\mathcal{O}+c(\mathfrak{dt}_{\lambda})^{-1}].\end{align*}
\end{prop}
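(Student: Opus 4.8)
The plan is to recast the whole statement in terms of a single rank-two lattice and its ``content ideals''. Write $\Lambda = \mathcal{O}\oplus\mathfrak{dt}_\lambda\subseteq F^2$ (as column vectors). A direct check shows that $g\in\text{SL}_2(F)$ satisfies $g\Lambda=\Lambda$ if and only if $g\in\Gamma_\lambda^1(\mathcal{O})$: the integrality conditions $a,d\in\mathcal{O}$, $b\in(\mathfrak{dt}_\lambda)^{-1}$, $c\in\mathfrak{dt}_\lambda$ are exactly what forces $g\Lambda\subseteq\Lambda$, and $\det g=1$ then forces $g^{-1}\Lambda\subseteq\Lambda$ too. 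For nonzero $v=(a,c)^\top\in F^2$ define the content ideal $\mathrm{ct}(v)=\{\phi(v):\phi\in\Lambda^\vee\}$, where $\Lambda^\vee=\mathcal{O}\oplus(\mathfrak{dt}_\lambda)^{-1}$ is the dual lattice for the standard pairing; explicitly $\mathrm{ct}(v)=a\mathcal{O}+c(\mathfrak{dt}_\lambda)^{-1}$, which is precisely the fractional ideal in the statement. Using the identification $\text{SL}_2(F)/B^1(F)\cong\mathbb{P}^1(F)$, $A\mapsto[a:c]$, it then suffices to prove that $[a:c]\mapsto[\mathrm{ct}(a,c)^\top]$ induces a bijection $\Gamma_\lambda^1(\mathcal{O})\backslash\mathbb{P}^1(F)\to Cl_F$.

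Well-definedness is immediate from functoriality of the content. If $g$ preserves $\Lambda$ then $g^\top$ preserves $\Lambda^\vee$, so $\mathrm{ct}(gv)=\mathrm{ct}(v)$; hence left multiplication by $\Gamma_\lambda^1(\mathcal{O})$ fixes the content ideal exactly. Right multiplication by $B^1(F)$ rescales the first column by some $s\in F^\times$, and $\mathrm{ct}(sv)=s\,\mathrm{ct}(v)$ lies in the same ideal class. Thus the map descends to the double coset space and lands in $Cl_F$.

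For surjectivity I would invoke the two-generator property of the Dedekind domain $\mathcal{O}$: every fractional ideal $\mathfrak{J}$ can be written as $a\mathcal{O}+c(\mathfrak{dt}_\lambda)^{-1}$ for suitable $a,c\in F$. Concretely, pick $a\in\mathfrak{J}$ nonzero, write $a\mathcal{O}=\mathfrak{J}\mathfrak{e}$, and use weak approximation to choose $c\in\mathfrak{J}\mathfrak{dt}_\lambda$ so that $c(\mathfrak{dt}_\lambda)^{-1}$ supplies exactly the prime factors of $\mathfrak{J}$ missing from $a\mathcal{O}$, forcing the sum to equal $\mathfrak{J}$. Taking $\mathfrak{J}$ in any prescribed class realises that class as a content, so the map is onto.

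The main work, and the real obstacle, is injectivity: two nonzero vectors $v_1,v_2$ with content ideals in the same class must be $\Gamma_\lambda^1(\mathcal{O})$-equivalent in $\mathbb{P}^1(F)$. After rescaling $v_2$ by an element of $F^\times$ (which fixes its class in $\mathbb{P}^1(F)$) I may assume $\mathrm{ct}(v_1)=\mathrm{ct}(v_2)=:\mathfrak{I}$ exactly. The key computation is that the saturation $Fv\cap\Lambda$ equals $\mathfrak{I}^{-1}v$; this follows from the identity $(\mathfrak{X}+\mathfrak{Y})^{-1}=\mathfrak{X}^{-1}\cap\mathfrak{Y}^{-1}$ applied to $\mathfrak{X}=a\mathcal{O}$, $\mathfrak{Y}=c(\mathfrak{dt}_\lambda)^{-1}$. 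Since $\Lambda/(Fv\cap\Lambda)$ is torsion-free of rank one, hence projective, the inclusion splits, giving $\Lambda=\mathfrak{I}^{-1}v_i\oplus W_i$ for $i=1,2$. By Steinitz theory the complements are isomorphic, as both $W_i$ have Steinitz class $[\mathfrak{I}\,\mathfrak{dt}_\lambda]$ (using $\wedge^2\Lambda\cong\mathfrak{dt}_\lambda$), so there is an $\mathcal{O}$-automorphism $g$ of $\Lambda$ with $g(tv_1)=tv_2$ for all $t\in F$ and $g(W_1)=W_2$; in particular $gv_1=v_2$. Finally $\det g\in\mathcal{O}^\times$, and precomposing with the automorphism that scales $W_1$ by a suitable unit while fixing the line $Fv_1$ pointwise adjusts the determinant to $1$ without moving $v_1$. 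Then $g\in\text{SL}(\Lambda)=\Gamma_\lambda^1(\mathcal{O})$ and $[v_1]=[v_2]$, completing injectivity. I expect the determinant adjustment and the careful bookkeeping of the splitting, rather than any deep new idea, to be the fiddliest part.
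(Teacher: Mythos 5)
Your proposal is correct, but it is not comparable to an argument in the paper for a simple reason: the paper does not prove this proposition at all; it quotes it as Proposition 3.8 of Ozawa's paper \cite{ozawa}, where it is established by the classical explicit method (completing a pair $(\alpha,\gamma)$ to a matrix by manipulating generators of ideals, in the style of the standard treatment of cusps of Hilbert modular surfaces). Your route is genuinely different and more structural: you identify $\Gamma^1_\lambda(\mathcal{O})$ with $\text{SL}(\Lambda)$ for the lattice $\Lambda=\mathcal{O}\oplus\mathfrak{dt}_\lambda$, read the ideal $a\mathcal{O}+c(\mathfrak{dt}_{\lambda})^{-1}$ as the content of the first column, compute the saturation $Fv\cap\Lambda=\mathrm{ct}(v)^{-1}v$ from the identity $(\mathfrak{X}+\mathfrak{Y})^{-1}=\mathfrak{X}^{-1}\cap\mathfrak{Y}^{-1}$, split off the saturated line, and finish with Steinitz's theorem plus a unit-scaling determinant correction. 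I checked the individual steps and they hold: the stabilizer computation, the $B^1(F)$-equivariance $\mathrm{ct}(sv)=s\,\mathrm{ct}(v)$, the approximation argument for surjectivity, the equality of Steinitz classes $[W_i]=[\mathfrak{I}\,\mathfrak{dt}_\lambda]$ for both complements (from $\wedge^2\Lambda\cong\mathfrak{dt}_\lambda$), and the determinant fix (scaling $W_1$ by $u^{-1}\in\mathcal{O}^\times$ preserves $W_1$, fixes $Fv_1$ pointwise, and multiplies the determinant by $u^{-1}$). The trade-off is worth noting: the explicit approach that the paper imports is constructive, and this matters for the rest of Section 3 --- the constant-term computations require concrete double-coset representatives $A_\lambda$ whose entries generate prescribed ideals ($\alpha\mathcal{O}=\mathfrak{n}_1\mathfrak{c}_i$, $\beta\in(\mathfrak{dt}_{\lambda}\mathfrak{c}_i)^{-1}$, $\gamma\mathcal{O}=\mathfrak{n}_2\mathfrak{dt}_{\lambda}\mathfrak{c}_i$, $\delta\in\mathfrak{c}_i^{-1}$), which is exactly what Ozawa's Proposition 3.9 (quoted immediately after) provides; your Steinitz argument proves the bijection cleanly but non-constructively, since the isomorphism $W_1\cong W_2$ is a pure existence statement. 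Conversely, your argument generalizes with essentially no change to arbitrary lattices $\mathcal{O}\oplus\mathfrak{a}$ and to higher rank (lines in $\mathcal{O}^{n-1}\oplus\mathfrak{a}$ modulo $\text{SL}$ of the lattice), where the explicit matrix manipulations become unwieldy.
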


Fix integral representatives $\mathfrak{c}_1,...,\mathfrak{c}_{h}$ of $Cl_F$, coprime to the level $\mathfrak{m}$, and suppose that $[\alpha\mathcal{O}+\gamma(\mathfrak{dt}_{\lambda})^{-1}] = [\mathfrak{c}_i]$. Then Proposition $3.9$ of \cite{ozawa} guarantees the existence of a matrix: \[A_{\lambda} = \left(\begin{array}{cc}\alpha & \beta\\ \gamma & \delta\end{array}\right)\in\text{SL}_2(F)\] such that $\alpha\mathcal{O} = \mathfrak{n}_1\mathfrak{c}_i, \beta \in (\mathfrak{dt}_{\lambda}\mathfrak{c}_i)^{-1}, \gamma\mathcal{O} = \mathfrak{n}_2\mathfrak{dt}_{\lambda}\mathfrak{c}_i$ and $\delta\in\mathfrak{c}_i^{-1}$ with $\mathfrak{n}_1, \mathfrak{n}_2$ coprime integral ideals. This matrix sends $\infty$ to $\frac{\alpha}{\gamma}$, as required.

Choose such an $A_{\lambda}$ for each $\lambda\in Cl^{+}_F$ and write $A = (A_{\lambda})\in\text{SL}_2(F)^{h^{+}}$. Then the constant term of each component of $f\in M_{\textbf{k}}(\mathfrak{m},\phi)$ at the cusp $x$ is equal to the constant term of the corresponding component of $f|A$ at $\infty$. This generalises the classical case. 

From now on we assume that representatives $\mathfrak{t}_{\lambda}$ for $Cl_F^{+}$ are chosen so that $\mathfrak{bdt}_{\lambda}\mathfrak{c}_i$ is integral and coprime to $\mathfrak{m}$. Then the following formula, due to Ozawa, computes the constant terms of the components of $E_k(\eta,\psi)\in E_{\textbf{k}}(\mathfrak{m},\phi)$ at all cusps.

\begin{thm}
Let $k\geq 2$ and $\mathfrak{f} = \text{cond}(\eta^{-1}\psi)$. 

Then $c_{\lambda}(0,E_k(\eta,\psi)|A) = 0$, unless $\mathfrak{b}\mid\mathfrak{n}_2$ (i.e. $\gamma\in \mathfrak{bdt}_{\lambda}\mathfrak{c}_i$), in which case:
\begin{align*}c_{\lambda}(0,E_k(\eta,\psi)|A) &= \frac{1}{2^g}\frac{\tau(\eta\psi^{-1})}{\tau(\psi^{-1})}\left(\frac{N(\mathfrak{bc}_i)}{N(\mathfrak{f})}\right)^k \text{sgn}(-\gamma)^{\textbf{q}}\eta(\gamma(\mathfrak{bdt}_{\lambda}\mathfrak{c}_i)^{-1})\text{sgn}(\alpha)^{\textbf{r}}\psi^{-1}(\alpha\mathfrak{c}_i^{-1})\\ &\cdot L(\eta^{-1}\psi,1-k)\prod_{\mathfrak{q}\mid\mathfrak{m}, \mathfrak{q}\nmid\mathfrak{f}}\left(1 - \frac{(\eta\psi^{-1})(\mathfrak{q})}{N(\mathfrak{q})^k}\right).\end{align*}
\end{thm}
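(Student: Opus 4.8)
The plan is to compute the zeroth Fourier coefficient of $E_k(\eta,\psi)_\lambda|_{\textbf{k}}A_\lambda$ at $\infty$ directly from the defining lattice sum, working at $s=0$ for $k>2$ where the series converges absolutely and recovering $k=2$ afterwards by meromorphic continuation in $s$ (the spurious factor $|\gamma\textbf{z}+\delta|^{2s}$ introduced by the slash tends to $1$ as $s\to 0$). Since $A_\lambda\in\text{SL}_2(F)$ has $\det A_\lambda=1$, the slash is simply $(E_k(\eta,\psi)_\lambda|_{\textbf{k}}A_\lambda)(\textbf{z}) = (\gamma\textbf{z}+\delta)^{-\textbf{k}}E_k(\eta,\psi)_\lambda(A_\lambda\textbf{z})$, and the identity $a(A_\lambda\textbf{z})+b = (a'\textbf{z}+b')/(\gamma\textbf{z}+\delta)$ with $(a',b')=(a,b)A_\lambda$ shows that the automorphy factor is exactly absorbed, leaving summands $(a'\textbf{z}+b')^{-\textbf{k}}$. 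Thus slashing reindexes the lattice sum by the right translation $(a,b)\mapsto(a,b)A_\lambda$, and I would rewrite the sign characters and the ideal characters $\eta,\psi^{-1}$ in the summand in terms of $(a',b')$ via $(a,b)=(a',b')A_\lambda^{-1}$.

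To isolate the constant term I would group the reindexed sum according to the value of $a'$. For each fixed $a'\neq 0$ the inner sum over $b'$ is a character-twisted Lipschitz series whose Fourier expansion (for $k\geq 2$) is supported only on totally positive frequencies, exactly as for the classical $G_k$; hence it contributes nothing to the zeroth coefficient. The constant term is therefore governed entirely by the locus $a'=a\alpha+b\gamma=0$, on which $(a,b)=(-b'\gamma,b'\alpha)$ with $a\neq 0$ (here $\gamma\neq 0$ since the cusp is not $\infty$), so that both character arguments land on genuine nonzero ideals. The admissible $b'$ are those lying in the intersection $\mathfrak{I}_1\cap\mathfrak{I}_2$ of the two fractional ideals cut out by the membership conditions $-b'\gamma\in\mathfrak{c}$ and $b'\alpha\in(\mathfrak{bdt}_\lambda)^{-1}\mathfrak{c}$ defining $S_{\lambda,\mathfrak{c}}$, taken modulo $U$.

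Next I would analyse the character factors on this locus. Using $\gamma\mathcal{O}=\mathfrak{n}_2\mathfrak{dt}_\lambda\mathfrak{c}_i$ and $\alpha\mathcal{O}=\mathfrak{n}_1\mathfrak{c}_i$, a valuation count at the primes $\mathfrak{q}\mid\mathfrak{b}$ shows that $\psi^{-1}$ forces $v_\mathfrak{q}(b')$ to equal the $\mathfrak{I}_2$-valuation, which is compatible with membership in $\mathfrak{I}_1$ only when $v_\mathfrak{q}(\mathfrak{n}_2)\geq v_\mathfrak{q}(\mathfrak{b})$; since $\mathfrak{n}_1,\mathfrak{n}_2$ are coprime and this comparison is independent of $\mathfrak{c}$, it yields exactly $\mathfrak{b}\mid\mathfrak{n}_2$, and otherwise every term carries a vanishing $\psi^{-1}$-value, giving the stated vanishing (the complementary coprimality to $\mathfrak{a}$ being ensured by the construction of $A_\lambda$). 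In the surviving case, factoring the $\eta$, $\psi^{-1}$ and sign characters through multiplicativity leaves a character $\eta\psi^{-1}$ on $b'\mathcal{O}$, and letting $b'\mathcal{O}$ together with the outer sum over $\mathfrak{c}\in\text{Cl}_F$ range over integral ideals reassembles the Hecke $L$-series $L(\eta\psi^{-1},k)$ with the Euler factors at $\mathfrak{q}\mid\mathfrak{m},\mathfrak{q}\nmid\mathfrak{f}$ removed, producing the product $\prod_{\mathfrak{q}\mid\mathfrak{m},\mathfrak{q}\nmid\mathfrak{f}}(1-(\eta\psi^{-1})(\mathfrak{q})N(\mathfrak{q})^{-k})$.

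Finally I would collect the prefactors: the global constant $C$ contributes $\sqrt{\Delta_F}$, $\Gamma(k)^g$ and $(-2\pi i)^{kg}$, which are precisely the archimedean ingredients of the completed functional equation of $L(\eta\psi^{-1},k)$, so that together with $\tau(\psi)$ and the normalization defining $c_\lambda(0,\cdot)$ they convert $L(\eta\psi^{-1},k)$ into $L(\eta^{-1}\psi,1-k)$ and throw off the Gauss-sum ratio $\tau(\eta\psi^{-1})/\tau(\psi^{-1})$, the conductor factor $(N(\mathfrak{bc}_i)/N(\mathfrak{f}))^k$, and the sign-twist $\text{sgn}(-\gamma)^{\textbf{q}}\eta(\gamma(\mathfrak{bdt}_\lambda\mathfrak{c}_i)^{-1})\text{sgn}(\alpha)^{\textbf{r}}\psi^{-1}(\alpha\mathfrak{c}_i^{-1})$. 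The hard part will be this last step: the sign-sensitive matching of the Gauss sums and all archimedean factors through the functional equation, complicated by the fact that $\eta^{-1}\psi$ has conductor only $\mathfrak{f}$ rather than $\mathfrak{m}$, by the need to track exactly which Euler factors are dropped, and by ensuring that the interaction of the $\mathfrak{c}$-sum with the signature characters and the $N(\mathfrak{t}_\lambda)$-normalization leaves an answer independent of all auxiliary choices.
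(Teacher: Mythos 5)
Your proposal is correct and takes essentially the same route as the paper: the paper quotes this statement from Ozawa, but its own proof of the level-raised analogue (the theorem for $E_k^{(\mathfrak{p})}(\eta,\psi)$, which explicitly adapts Ozawa's argument) proceeds exactly as you outline. Namely, it unfolds the slash as the reindexing $(a,b)\mapsto(a,b)A_\lambda$ of the lattice sum, extracts the constant term from the locus $a\alpha+b\gamma=0$, obtains the $\mathfrak{b}\mid\mathfrak{n}_2$ dichotomy from $\psi^{-1}$ vanishing on ideals not coprime to $\mathfrak{b}$, parametrizes the surviving pairs by the single variable $a\beta+b\delta$ so that the sum over $\mathfrak{c}\in Cl_F$ reassembles the (partial) Hecke $L$-series $L(\eta\psi^{-1},k)$, and finally converts to $L(\eta^{-1}\psi,1-k)$ via the functional equation and Gauss-sum identities, just as in your last step.
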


We will assume for the rest of the paper that $E_k(\eta,\psi)\in E_{\textbf{k}}^{\text{new}}(\mathfrak{m},\phi)$ and that $\mathfrak{a}$ and $\mathfrak{b}$ are coprime (although we note that some of the results stated later are true in greater generality). Under these assumptions we have that $\mathfrak{f} = \text{cond}(\eta^{-1}\psi) = \text{cond}(\eta^{-1})\text{cond}(\psi) = \mathfrak{ab} = \mathfrak{m}$, so that the Euler product condition is empty.

\begin{cor}
With the above assumptions we have $c_{\lambda}(0,E_k(\eta,\psi)|A) = 0$, unless $\mathfrak{b}\mid\mathfrak{n}_2$, in which case: \begin{align*}c_{\lambda}(0,E_k(\eta,\psi)|A) &= \frac{1}{2^g}\frac{\tau(\eta\psi^{-1})}{\tau(\psi^{-1})}\left(\frac{N(\mathfrak{c}_i)}{N(\mathfrak{a})}\right)^k \text{sgn}(-\gamma)^{\textbf{q}}\eta(\gamma(\mathfrak{bdt}_{\lambda}\mathfrak{c}_i)^{-1})\text{sgn}(\alpha)^{\textbf{r}}\psi^{-1}(\alpha\mathfrak{c}_i^{-1})\\ &\cdot L(\eta^{-1}\psi,1-k).\end{align*}
\end{cor}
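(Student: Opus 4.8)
The plan is to obtain the Corollary as a direct specialization of the preceding Theorem, so the work reduces to tracking how the newform and coprimality hypotheses simplify each factor in Ozawa's formula. First I would pin down the conductor $\mathfrak{f} = \text{cond}(\eta^{-1}\psi)$. Since $E_k(\eta,\psi)$ is a newform, the characters $\eta,\psi$ are primitive; as $\eta$ has modulus $\mathfrak{a}$ and $\psi$ has modulus $\mathfrak{b}$, primitivity means precisely $\text{cond}(\eta) = \mathfrak{a}$ and $\text{cond}(\psi) = \mathfrak{b}$, and the defining relation $\mathfrak{m} = \mathfrak{a}\mathfrak{b}$ records $\text{cond}(\eta)\text{cond}(\psi) = \mathfrak{m}$. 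Passing to the inverse leaves a conductor unchanged, so $\text{cond}(\eta^{-1}) = \mathfrak{a}$ as well.

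Next I would compute $\text{cond}(\eta^{-1}\psi)$ using the coprimality $\gcd(\mathfrak{a},\mathfrak{b}) = 1$. For primitive ray class characters with coprime conductors, the conductor of the product equals the product of the conductors; this is the standard multiplicativity of conductors, which follows from the Chinese Remainder decomposition $(\mathcal{O}/\mathfrak{a}\mathfrak{b})^{\times} \cong (\mathcal{O}/\mathfrak{a})^{\times}\times(\mathcal{O}/\mathfrak{b})^{\times}$ together with the primitivity of each factor forcing nontriviality on each component. Hence $\mathfrak{f} = \text{cond}(\eta^{-1}\psi) = \text{cond}(\eta^{-1})\text{cond}(\psi) = \mathfrak{a}\mathfrak{b} = \mathfrak{m}$. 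This single identity is the only step with genuine content; it is exactly where the coprimality hypothesis is used, and without it one cannot conclude $\mathfrak{f} = \mathfrak{m}$.

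With $\mathfrak{f} = \mathfrak{m}$ in hand, the two required simplifications are immediate bookkeeping. The Euler factor $\prod_{\mathfrak{q}\mid\mathfrak{m},\, \mathfrak{q}\nmid\mathfrak{f}}\left(1 - (\eta\psi^{-1})(\mathfrak{q})N(\mathfrak{q})^{-k}\right)$ ranges over primes dividing $\mathfrak{m}$ but not $\mathfrak{f} = \mathfrak{m}$, of which there are none, so the product is empty and equals $1$. Since $N(\mathfrak{f}) = N(\mathfrak{m}) = N(\mathfrak{a})N(\mathfrak{b})$, the norm factor collapses as $\left(N(\mathfrak{b}\mathfrak{c}_i)/N(\mathfrak{f})\right)^k = \left(N(\mathfrak{c}_i)/N(\mathfrak{a})\right)^k$. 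Substituting both into the Theorem yields the displayed formula, while the vanishing clause ($c_\lambda(0,E_k(\eta,\psi)|A) = 0$ unless $\mathfrak{b}\mid\mathfrak{n}_2$) is inherited verbatim, as it does not depend on $\mathfrak{f}$. I do not anticipate any real obstacle here beyond invoking the conductor multiplicativity cleanly.
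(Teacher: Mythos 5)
Your proposal is correct and follows exactly the paper's own route: the paper derives the Corollary in one line by noting that primitivity of $\eta,\psi$ (the newform condition) together with coprimality of $\mathfrak{a},\mathfrak{b}$ forces $\mathfrak{f} = \text{cond}(\eta^{-1}\psi) = \text{cond}(\eta^{-1})\text{cond}(\psi) = \mathfrak{ab} = \mathfrak{m}$, so the Euler product is empty and $\left(N(\mathfrak{bc}_i)/N(\mathfrak{f})\right)^k$ collapses to $\left(N(\mathfrak{c}_i)/N(\mathfrak{a})\right)^k$. Your added justification of conductor multiplicativity via the Chinese Remainder decomposition is a fine elaboration of the step the paper takes for granted.
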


Fix a prime ideal $\mathfrak{p}$ not dividing $\mathfrak{m}$ and for each $\lambda\in Cl^{+}_F$ choose a totally positive $a_{\mu}\in F$ such that $\mathfrak{p}\mathfrak{t}_{\lambda} = a_{\mu}\mathfrak{t}_{\mu}$. Then we may consider $E^{(\mathfrak{p})}_k(\eta,\psi)\in E_{\textbf{k}}^{\text{new},(\mathfrak{p})}(\mathfrak{m},\phi)\subseteq E_k(\mathfrak{mp},\phi)$. We adapt Ozawa's proof to compute the constant terms $c_{\lambda}(0,E_k^{(\mathfrak{p})}(\eta,\psi)|A)$. The result is as follows:

\begin{thm}  Let $k\geq 2$ and write $\mathfrak{p} = \mathfrak{p'}\mathfrak{g}$ and $\mathfrak{n}_2 = \mathfrak{n}_2'\mathfrak{g}$, where $\mathfrak{g}$ is the gcd of $\mathfrak{p}$ and $\mathfrak{n}_2$. Then $c_{\lambda}(0,E_k^{(\mathfrak{p})}(\eta,\psi)|A) = 0$ unless $\mathfrak{b}\mid\mathfrak{n}_2'$, in which case: \begin{align*}c_{\lambda}(0,E_k^{(\mathfrak{p})}(\eta,\psi)|A) &= \frac{1}{2^g}\frac{\tau(\eta\psi^{-1})}{\tau(\psi^{-1})}\left(\frac{N(\mathfrak{c}_i)}{N(\mathfrak{ap'})}\right)^k\text{sgn}(-\gamma)^{\textbf{q}}\eta(\gamma(\mathfrak{gbdt}_{\lambda}\mathfrak{c}_i)^{-1})\text{sgn}(\alpha)^{\textbf{r}}\psi^{-1}(\alpha\mathfrak{p'c}_i^{-1})\\ &\cdot L(\eta^{-1}\psi,1-k).\end{align*}
\end{thm}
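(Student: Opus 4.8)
The plan is to reduce the computation to the constant-term formula for the newform $E_k(\eta,\psi)$ itself, i.e.\ to the preceding Corollary, rather than re-running Ozawa's series argument from scratch. By the definition of the level raise, the $\lambda$-component of $E_k^{(\mathfrak{p})}(\eta,\psi)$ is $N(\mathfrak{p})^{-k/2}E_k(\eta,\psi)_{\mu}|_{\textbf{k}}\,\text{diag}(a_{\mu},1)$, where $\mathfrak{p}\mathfrak{t}_{\lambda}=a_{\mu}\mathfrak{t}_{\mu}$ with $a_\mu$ totally positive. Hence $(E_k^{(\mathfrak{p})}(\eta,\psi)|A)_{\lambda}=N(\mathfrak{p})^{-k/2}E_k(\eta,\psi)_{\mu}|_{\textbf{k}}B_{\lambda}$, where $B_{\lambda}=\text{diag}(a_{\mu},1)A_{\lambda}=\left(\begin{smallmatrix}a_\mu\alpha & a_\mu\beta\\ \gamma & \delta\end{smallmatrix}\right)\in\text{GL}^+_2(F)$ has totally positive determinant $a_\mu$. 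The whole problem is therefore to compute the constant term at $\infty$ of $E_k(\eta,\psi)_{\mu}$ slashed by this single $\text{GL}^+_2$-matrix.

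First I would identify the cusp $B_{\lambda}(\infty)=[a_\mu\alpha:\gamma]$. Using Proposition $3.8$ of \cite{ozawa} for the group $\Gamma_{\mu}^1(\mathcal{O})$, its class in $Cl_F$ is $[a_\mu\alpha\mathcal{O}+\gamma(\mathfrak{d}\mathfrak{t}_{\mu})^{-1}]$; substituting the ideal factorisations of $\alpha,\gamma$ coming from $A_\lambda$ together with $a_\mu\mathcal{O}=\mathfrak{p}\mathfrak{t}_{\lambda}\mathfrak{t}_{\mu}^{-1}$ turns this into $[\mathfrak{t}_{\lambda}\mathfrak{t}_{\mu}^{-1}\mathfrak{c}_i(\mathfrak{p}\mathfrak{n}_1+\mathfrak{n}_2)]$, and the coprimality of $\mathfrak{n}_1,\mathfrak{n}_2$ with the primality of $\mathfrak{p}$ collapses the last factor to $\mathfrak{g}=\gcd(\mathfrak{p},\mathfrak{n}_2)$. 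This singles out the representative $\mathfrak{c}_j$ with $[\mathfrak{c}_j]=[\mathfrak{t}_{\lambda}\mathfrak{t}_{\mu}^{-1}\mathfrak{c}_i\mathfrak{g}]$. Proposition $3.9$ of \cite{ozawa} then supplies a matrix $A_{\mu}'=\left(\begin{smallmatrix}\alpha' & \beta'\\ \gamma' & \delta'\end{smallmatrix}\right)\in\text{SL}_2(F)$ of the form required by the Corollary (relative to $\mathfrak{t}_{\mu}$ and $\mathfrak{c}_j$) with $A_{\mu}'(\infty)=B_{\lambda}(\infty)$. Since $A_{\mu}'^{-1}B_{\lambda}$ fixes $\infty$ it is upper triangular, so $B_{\lambda}=A_{\mu}'U$ for a Borel element $U=\left(\begin{smallmatrix}u & v\\ 0 & w\end{smallmatrix}\right)$ with $uw=a_\mu$; comparing first columns gives $u\mathcal{O}=\mathfrak{t}_{\lambda}\mathfrak{t}_{\mu}^{-1}\mathfrak{c}_i\mathfrak{c}_j^{-1}\mathfrak{g}$ and hence $w\mathcal{O}=\mathfrak{p}'\mathfrak{c}_i^{-1}\mathfrak{c}_j$, while the coprimality constraint on $A_{\mu}'$ forces its invariants to be $\mathfrak{n}_1'=\mathfrak{p}'\mathfrak{n}_1$ and $\mathfrak{n}_2'=\mathfrak{n}_2\mathfrak{g}^{-1}$. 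In particular the Corollary's vanishing condition $\mathfrak{b}\mid\mathfrak{n}_2'$ is precisely the condition in the statement.

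With the factorisation in hand I would write $E_k(\eta,\psi)_{\mu}|B_{\lambda}=(E_k(\eta,\psi)_{\mu}|A_{\mu}')|U$ and read off the constant term at $\infty$ in two moves. The inner slash by $A_{\mu}'$ has constant term $N(\mathfrak{t}_{\mu})^{k/2}c_{\mu}(0,E_k(\eta,\psi)|A')$, given by the Corollary; slashing further by the upper-triangular $U$ (which fixes $\infty$) multiplies the constant term by $N(a_\mu)^{k/2}\prod_i\sigma_i(w)^{-k}=N(a_\mu)^{k/2}|N(w)|^{-k}\text{sgn}(w)^{\textbf{k}}$. Re-attaching the prefactor $N(\mathfrak{p})^{-k/2}N(\mathfrak{t}_{\lambda})^{-k/2}$ and using $N(a_\mu)=N(\mathfrak{p})N(\mathfrak{t}_{\lambda})/N(\mathfrak{t}_{\mu})$, all of the $N(\mathfrak{t})$ and $N(\mathfrak{p})$ factors cancel, leaving $|N(w)|^{-k}=(N(\mathfrak{c}_i)/(N(\mathfrak{p}')N(\mathfrak{c}_j)))^k$ times the Corollary value. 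The product with the Corollary's $(N(\mathfrak{c}_j)/N(\mathfrak{a}))^k$ telescopes to $(N(\mathfrak{c}_i)/N(\mathfrak{a}\mathfrak{p}'))^k$; the ideals appearing in the character values rewrite as $\gamma'(\mathfrak{b}\mathfrak{d}\mathfrak{t}_{\mu}\mathfrak{c}_j)^{-1}=\gamma(\mathfrak{g}\mathfrak{b}\mathfrak{d}\mathfrak{t}_{\lambda}\mathfrak{c}_i)^{-1}$ and $\alpha'\mathfrak{c}_j^{-1}=\alpha\mathfrak{p}'\mathfrak{c}_i^{-1}$; and the extra sign contributions $\text{sgn}(u)^{\textbf{q}+\textbf{r}}$ (from $\alpha'=a_\mu\alpha/u$, $\gamma'=\gamma/u$) and $\text{sgn}(w)^{\textbf{k}}$ combine, via $\textbf{q}+\textbf{r}\equiv\textbf{k}$ and the total positivity of $uw=a_\mu$, to $\text{sgn}(a_\mu)^{\textbf{k}}=1$. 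Assembling these gives precisely the claimed formula.

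The main obstacle is the ideal-theoretic bookkeeping, and in particular checking that the single displayed formula holds uniformly in the two genuinely different cases $\mathfrak{p}\mid\mathfrak{n}_2$ (so $\mathfrak{g}=\mathfrak{p}$, $\mathfrak{p}'=\mathcal{O}$) and $\mathfrak{p}\nmid\mathfrak{n}_2$ (so $\mathfrak{g}=\mathcal{O}$, $\mathfrak{p}'=\mathfrak{p}$): the appearance of $\mathfrak{p}'$ in the norm ratio and the $\psi^{-1}$-argument, together with $\mathfrak{g}$ in the $\eta$-argument, is exactly what makes both cases collapse to one expression. One must also track the $\text{sgn}$-factors through the slashes carefully, since $u$ and $w$ need not be totally positive even though their product is. The analytic input --- absolute convergence for $k>2$ and the meromorphic continuation at $k=2$ --- requires nothing new, being inherited verbatim from the Corollary for $E_k(\eta,\psi)$.
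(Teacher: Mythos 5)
Your proposal is correct, but it takes a genuinely different route from the paper's proof. The paper re-runs Ozawa's unfolding argument from scratch on the level-raised series: it writes out $(E_k^{(\mathfrak{p})}(\eta,\psi)|A)_{\lambda}(\textbf{z},s)$, isolates the terms with $aa_{\mu}\alpha+b\gamma=0$, splits into the cases $\mathfrak{b}\nmid\mathfrak{n}_2'$ (where a divisibility argument forces $\psi^{-1}$ to vanish on the relevant ideals) and $\mathfrak{b}\mid\mathfrak{n}_2'$ (where an explicit bijection parametrises the contributing pairs by a single element $c$), resums to obtain $L(\eta\psi^{-1},k)$, and only then invokes the functional equation and Gauss-sum identities. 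You instead never touch the series: you factor $\text{diag}(a_{\mu},1)A_{\lambda}=A_{\mu}'U$ with $A_{\mu}'$ an Ozawa-type matrix for the cusp $[a_{\mu}\alpha:\gamma]$ relative to $(\mathfrak{t}_{\mu},\mathfrak{c}_j)$ and $U$ Borel, and push the already-established Corollary for $c_{\mu}(0,E_k(\eta,\psi)|A')$ through the slash by $U$. I checked the bookkeeping: the cusp class is indeed $[\mathfrak{t}_{\lambda}\mathfrak{t}_{\mu}^{-1}\mathfrak{c}_i\mathfrak{g}]$ (using $\mathfrak{p}\mathfrak{n}_1+\mathfrak{n}_2=\mathfrak{g}$, valid since $\mathfrak{n}_1,\mathfrak{n}_2$ are coprime and $\mathfrak{p}$ is prime); the coprimality of the invariants of $A_{\mu}'$ does force $\mathfrak{n}_1^{A'}=\mathfrak{p}'\mathfrak{n}_1$, $\mathfrak{n}_2^{A'}=\mathfrak{n}_2'$, so the vanishing conditions match; the norm factors $N(\mathfrak{t}_{\lambda})^{-k/2}N(\mathfrak{p})^{-k/2}N(a_{\mu})^{k/2}N(\mathfrak{t}_{\mu})^{k/2}$ cancel exactly; $|N(w)|^{-k}$ telescopes with $(N(\mathfrak{c}_j)/N(\mathfrak{a}))^k$ to $(N(\mathfrak{c}_i)/N(\mathfrak{a}\mathfrak{p}'))^k$; the ideal arguments of $\eta$ and $\psi^{-1}$ rewrite as claimed; and the signs combine to $\text{sgn}(uw)^{\textbf{k}}=\text{sgn}(a_{\mu})^{\textbf{k}}=1$ via $\textbf{q}+\textbf{r}\equiv\textbf{k}\bmod(2\mathbb{Z})^d$. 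What your approach buys is economy and transparency: the analytic content (functional equation, Gauss sums, the $k=2$ continuation) is used only once, inside the Corollary, and the level-raised constant term is revealed as the ordinary constant term at a shifted cusp, rescaled by an explicit Borel factor. What the paper's approach buys is self-containedness at the level of the series and machinery (the case analysis and bijection) that adapts directly to level raising by a general ideal $\mathfrak{r}$, as the authors remark after the proof; your method would generalise too, but only after checking that $\gcd$ bookkeeping like $\mathfrak{r}\mathfrak{n}_1+\mathfrak{n}_2$ still collapses appropriately when $\mathfrak{r}$ is not prime. One small point worth making explicit if you write this up: applying the Corollary at the pair $(\mu,\mathfrak{c}_j)$ uses the standing assumption that the representatives were chosen with $\mathfrak{b}\mathfrak{d}\mathfrak{t}_{\mu}\mathfrak{c}_j$ integral and coprime to $\mathfrak{m}$ for \emph{all} pairs, not just the pair $(\lambda,\mathfrak{c}_i)$ you started from.
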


\begin{proof}
We have that: \begin{align*}(E_k^{(\mathfrak{p})}(\eta,\psi)|A)_{\lambda}(\textbf{z},s)  &= (E^{(\mathfrak{p})}_k(\eta,\psi)_{\lambda}|A_{\lambda})(\textbf{z},s) \\&= \frac{N(\mathfrak{t}_{\lambda})^{\frac{k}{2}}}{N(\mathfrak{t}_{\mu})^{\frac{k}{2}}}E_k(\eta,\psi)_{\mu}(a_{\mu}(A_{\lambda}\textbf{z}),s) \\ &= \frac{C\tau(\psi)N(\mathfrak{t}_{\lambda})^{\frac{k}{2}}}{N(\mathfrak{b})N(\mathfrak{t}_{\mu})^{k}}\sum_{\mathfrak{c}\in Cl_F}N(\mathfrak{c})^k\\ &\cdot\sum_{\substack{(a,b)\in S_{\mu,\mathfrak{c}}/U \\ (a,b) \ne (0,0)}}\frac{\text{sgn}(a)^{\textbf{q}}\eta(a\mathfrak{c}^{-1})\text{sgn}(-b)^{\textbf{r}}\psi^{-1}(-b\mathfrak{bdt}_{\mu}\mathfrak{c}^{-1})|\gamma\textbf{z}+\delta|^{2s}}{((aa_{\mu}\alpha+b\gamma)\textbf{z}+(aa_{\mu}\beta+b\delta))^{\textbf{k}} |(aa_{\mu}\alpha+b\gamma)\textbf{z}+(aa_{\mu}\beta+b\delta)|^{2s}}.\end{align*}

It then follows that: \[c_{\lambda}(0,E_k^{(\mathfrak{p})}(\eta,\psi)|A) = \frac{C\tau(\psi)}{N(\mathfrak{b})N(\mathfrak{t}_{\mu})^{k}}\sum_{\mathfrak{c}\in Cl_F}N(\mathfrak{c})^kf(A_{\lambda},\mathfrak{c}),\] where \[f(A_{\lambda},\mathfrak{c}) = \sum_{\substack{(a,b)\in S_{\mu,\mathfrak{c}}/U \\ (a,b) \ne (0,0)\\ aa_{\mu}\alpha+b\gamma = 0}}\frac{\text{sgn}(a)^{\textbf{q}}\eta(a\mathfrak{c}^{-1})\text{sgn}(-b)^{\textbf{r}}\psi^{-1}(-b\mathfrak{bdt}_{\mu}\mathfrak{c}^{-1})}{N(aa_{\mu}\beta+b\delta)^k}.\]

For a fixed $\mathfrak{c}$ we consider the value $f(A_{\lambda},\mathfrak{c})$ in more detail.

Given $(a,b)\in S_{\mu,\mathfrak{c}}/U$ satisfying $aa_{\mu}\alpha+b\gamma = 0$ it is immediate that $b\gamma = -aa_{\mu}\alpha\in a_{\mu}\mathfrak{n}_1\mathfrak{cc}_i$. But it is also clear that $b\gamma \in \mathfrak{n}_2\mathfrak{cc}_i\mathfrak{t}_{\lambda}(\mathfrak{bt}_{\mu})^{-1} = a_{\mu}\mathfrak{n}_2\mathfrak{cc}_{i}(\mathfrak{bp})^{-1}$. 

Thus: \begin{align*}b\gamma \in a_{\mu}(\mathfrak{n}_1\mathfrak{cc}_i \cap \mathfrak{n}_2\mathfrak{cc}_i(\mathfrak{bp})^{-1}) &= a_{\mu}(\mathfrak{n}_1\mathfrak{bp}\cap \mathfrak{n}_2)\mathfrak{cc}_i(\mathfrak{bp})^{-1}\\ &=a_{\mu}(\mathfrak{n}_1\mathfrak{bp'}\cap\mathfrak{n}_2')\mathfrak{gcc}_i(\mathfrak{bp})^{-1}\\ &= a_{\mu}(\mathfrak{n}_1\mathfrak{bp'}\cap\mathfrak{n}_2')\mathfrak{cc}_i(\mathfrak{bp'})^{-1}.\end{align*}

\vspace{0.2in}
\textbf{Case 1: $\mathfrak{b}\nmid\mathfrak{n}_2'$}

\vspace{0.1in}
In this case there exists a prime ideal $\mathfrak{q}$ and integral ideals $\mathfrak{b'},\mathfrak{n}_2''$ such that $\mathfrak{b} = \mathfrak{q}^f\mathfrak{b'}$ and $\mathfrak{n}_2' = \mathfrak{q}^e\mathfrak{n}_2''$, with $\mathfrak{q}\nmid\mathfrak{b'}\mathfrak{n}_2''$ and $0\leq e < f$.

So then we find that: \begin{align*}b\gamma &\in a_{\mu}(\mathfrak{n}_1\mathfrak{q}^f\mathfrak{b'p'}\cap\mathfrak{q}^e\mathfrak{n}_2'')\mathfrak{cc}_i(\mathfrak{q}^f\mathfrak{b'p'})^{-1} \\ &= a_{\mu}\mathfrak{q}^f(\mathfrak{n}_1\mathfrak{b'p'}\cap\mathfrak{n}_2'')\mathfrak{cc}_i(\mathfrak{q}^f\mathfrak{b'p'})^{-1}\\ &=a_{\mu}(\mathfrak{n}_1\mathfrak{b'p'}\cap\mathfrak{n}_2'')\mathfrak{cc}_i(\mathfrak{b'p'})^{-1}.\end{align*}

It then follows that: \begin{align*} b &\in a_{\mu}(\mathfrak{n}_1\mathfrak{b'p'}\cap\mathfrak{n}_2'')\mathfrak{cc}_i(\mathfrak{b'p'n}_2\mathfrak{dt}_{\lambda}\mathfrak{c}_i)^{-1}\\ &=a_{\mu}(\mathfrak{n}_1\mathfrak{b'p'}\cap\mathfrak{n}_2'')\mathfrak{c}(\mathfrak{b'p'n}_2\mathfrak{dt}_{\lambda})^{-1}\\ &= a_{\mu}(\mathfrak{n}_1\mathfrak{b'p'}\cap\mathfrak{n}_2'')\mathfrak{c}(\mathfrak{b'p'n}_2'\mathfrak{gdt}_{\lambda})^{-1}.\end{align*}

Finally we see that: \begin{align*} -b\mathfrak{bdt}_{\mu}\mathfrak{c}^{-1} &\subseteq a_{\mu}(\mathfrak{n}_1\mathfrak{b'p'}\cap\mathfrak{n}_2'')\mathfrak{bt}_{\mu}(\mathfrak{b'p'n}_2'\mathfrak{gt}_{\lambda})^{-1}\\ &= a_{\mu}\mathfrak{q}^{f-e}(\mathfrak{n}_1\mathfrak{b'p'}\cap\mathfrak{n}_2'')(\mathfrak{n}_2'')^{-1}(\mathfrak{t}_{\mu}(\mathfrak{pt}_{\lambda})^{-1})\\ &=\mathfrak{q}^{f-e}(\mathfrak{n}_1\mathfrak{b'p'}\cap\mathfrak{n}_2'')(\mathfrak{n}_2'')^{-1} \subseteq \mathfrak{q}^{f-e}.\end{align*} Since $f-e>0$ and $f>0$ we see that $-b\mathfrak{bdt}_{\mu}\mathfrak{c}^{-1}$ is not coprime with $\mathfrak{b}$ and so in this case $\psi^{-1}(-b\mathfrak{bdt}_{\mu}\mathfrak{c}^{-1}) = 0$. The argument was independent of the choice of $(a,b)$ and so $f(A_{\lambda},\mathfrak{c}) = 0$ for each $\mathfrak{c}$, showing that $c_{\lambda}(0,E_k^{(\mathfrak{p})}(\eta,\psi)|A) = 0$.

\vspace{0.2in}
\textbf{Case 2: $\mathfrak{b}\mid\mathfrak{n}_2'$} 

\vspace{0.1in}
In this case we have $(\mathfrak{n}_1\mathfrak{bp'}\cap\mathfrak{n}_2') = \mathfrak{n}_1\mathfrak{n}_2'\mathfrak{p'}$ and so: \begin{align*}b\gamma &\in a_{\mu}(\mathfrak{n}_1\mathfrak{n}_2'\mathfrak{p'})\mathfrak{cc}_i(\mathfrak{bp'})^{-1}\\ &= a_{\mu}\mathfrak{n}_1\mathfrak{n}_2'\mathfrak{cc}_i\mathfrak{b}^{-1}.\end{align*} Hence in this case: \begin{align*}b &\in a_{\mu}\mathfrak{n}_1\mathfrak{n}_2'\mathfrak{cc}_i(\mathfrak{bn}_2\mathfrak{dt}_{\lambda}\mathfrak{c}_i)^{-1}\\ &= a_{\mu}\mathfrak{n}_1\mathfrak{n}_2'\mathfrak{c}(\mathfrak{bn}_2\mathfrak{dt}_{\lambda})^{-1}\\ &= \mathfrak{pn}_1\mathfrak{c}(\mathfrak{bgdt}_{\mu})^{-1}\\ &= \mathfrak{p'n}_1(\mathfrak{bdt}_{\mu})^{-1}\mathfrak{c}\\ &\subseteq \mathfrak{p'}(\mathfrak{bdt}_{\mu})^{-1}\mathfrak{c}.\end{align*}

So we see that in this case there can only be a contribution from those $(a,b)\in S_{\mu,\mathfrak{c}}$ with $b\in\mathfrak{p'}(\mathfrak{bdt}_{\mu})^{-1}\mathfrak{c}$ and $aa_{\mu}\alpha+b\gamma = 0$. We can describe such pairs by a single parameter.

\vspace{0.2in}
\textbf{Claim} - There is a bijection: \begin{align*}\{(a,b)\in S_{\mu,\mathfrak{c}}\,|\,b\in \mathfrak{p'}(\mathfrak{bdt}_{\mu})^{-1}\mathfrak{c}, aa_{\mu}\alpha+b\gamma = 0\}/U &\longrightarrow \mathfrak{p'}(\mathfrak{bdt}_{\mu}\mathfrak{c}_i)^{-1}\mathfrak{c}/U\\ [(a,b)] &\longmapsto [aa_{\mu}\beta + b\delta].\end{align*}

It is clear that this map is well defined on classes mod $U$ and that the image lies inside $\mathfrak{p'}(\mathfrak{bdt}_{\mu}\mathfrak{c}_i)^{-1}\mathfrak{c}$ since $aa_{\mu}\beta\in a_{\mu}\mathfrak{c}(\mathfrak{dt}_{\lambda}\mathfrak{c}_i)^{-1} = \mathfrak{p}(\mathfrak{dt}_{\mu}\mathfrak{c}_i)^{-1}\mathfrak{c} \subseteq \mathfrak{p'}(\mathfrak{bdt}_{\mu}\mathfrak{c}_i)^{-1}\mathfrak{c}$ and $b\delta\in \mathfrak{p'}(\mathfrak{bdt}_{\mu}\mathfrak{c}_i)^{-1}\mathfrak{c}$.

To prove that the map is a bijection we note that the inverse map is $[c] \mapsto [(-c\gamma a_{\mu}^{-1}, c\alpha)]$. It is clear that this map is well defined on the classes mod $U$. Also the image is as claimed since if $c\in \mathfrak{p'}(\mathfrak{bdt}_{\mu}\mathfrak{c}_i)^{-1}\mathfrak{c}$ then $(-c\gamma a_{\mu}^{-1})a_{\mu}\alpha + (c\alpha)\gamma = 0$, $c\alpha\in \mathfrak{p'}(\mathfrak{bdt}_{\mu}\mathfrak{c}_i)^{-1}\mathfrak{cc}_i = \mathfrak{p'}(\mathfrak{bdt}_{\mu})^{-1}\mathfrak{c}$ and: \begin{align*} -c\gamma a_{\mu}^{-1} &\in a_{\mu}^{-1}\mathfrak{p'}(\mathfrak{bdt}_{\mu}\mathfrak{c}_i)^{-1}\mathfrak{cn}_2\mathfrak{dt}_{\lambda}\mathfrak{c}_i\\ &=a_{\mu}^{-1}\mathfrak{p'cn}_2\mathfrak{t}_{\lambda}(\mathfrak{bt}_{\mu})^{-1}\\ &= \mathfrak{p'cn}_2(\mathfrak{bp})^{-1}\\ &=\mathfrak{c}(\mathfrak{n}_2'\mathfrak{b}^{-1}) \subseteq \mathfrak{c}.\end{align*} The fact that this map is the inverse map is a simple calculation and uses the fact that $\text{det}(A_{\lambda}) = 1$.

Given the claim we may now write: \begin{align*}f(A_{\lambda},\mathfrak{c}) &= \sum_{\substack{c\in\mathfrak{p'}(\mathfrak{bdt}_{\mu}\mathfrak{c}_i)^{-1}\mathfrak{c}\\ c \bmod U\\ c\neq 0}}\frac{\text{sgn}(-c\gamma a_{\mu}^{-1})^{\textbf{q}}\eta(-c\gamma a_{\mu}^{-1}\mathfrak{c}^{-1})\text{sgn}(-c\alpha)^{\textbf{r}}\psi^{-1}(-c\alpha\mathfrak{bdt}_{\mu}\mathfrak{c}^{-1})}{N(c)^k}\\ &= \text{sgn}(\gamma a_{\mu}^{-1})^{\textbf{q}}\eta(\gamma a_{\mu}^{-1}\mathcal{O})\text{sgn}(\alpha)^{\textbf{r}}\psi^{-1}(\alpha\mathcal{O})\\ &\cdot\sum_{\substack{c\in\mathfrak{p'}(\mathfrak{bdt}_{\mu}\mathfrak{c}_i)^{-1}\mathfrak{c}\\ c \bmod U\\ c\neq 0}}\frac{\text{sgn}(-c)^{\textbf{q}+\textbf{r}}\eta(c\mathfrak{c}^{-1})\psi^{-1}(c\mathfrak{bdt}_{\mu}\mathfrak{c}^{-1})}{N(c)^k}\\ &= (-1)^{kd}\text{sgn}(\gamma)^{\textbf{q}}\eta(\gamma a_{\mu}^{-1}\mathcal{O})\text{sgn}(\alpha)^{\textbf{r}}\psi^{-1}(\alpha\mathcal{O})\\ &\cdot\sum_{\substack{c\in\mathfrak{p'}(\mathfrak{bdt}_{\mu}\mathfrak{c}_i)^{-1}\mathfrak{c}\\ c \bmod U\\ c\neq 0}}\frac{\eta(c\mathfrak{c}^{-1})\psi^{-1}(c\mathfrak{bdt}_{\mu}\mathfrak{c}^{-1})}{N(c\mathcal{O})^k} \\ &=  (-1)^{kd}[\mathcal{O}^{\times}:U]\text{sgn}(\gamma)^{\textbf{q}}\eta(\gamma a_{\mu}^{-1}\mathcal{O})\text{sgn}(\alpha)^{\textbf{r}}\psi^{-1}(\alpha\mathcal{O})\\ &\cdot\sum_{\substack{c\in\mathfrak{p'}(\mathfrak{bdt}_{\mu}\mathfrak{c}_i)^{-1}\mathfrak{c}\\ c \bmod \mathcal{O}^{\times}\\ c\neq 0}}\frac{\eta(c\mathfrak{c}^{-1})\psi^{-1}(c\mathfrak{bdt}_{\mu}\mathfrak{c}^{-1})}{N(c\mathcal{O})^k}.\end{align*} Here we used the fact that $a_{\mu}^{-1}$ is totally positive and that $\text{sgn}(-c)^{\textbf{q}+\textbf{r}}N(c)^{-k} = (-1)^{kd}N(c\mathcal{O})^{-k}$ (since $\textbf{q}+\textbf{r} \equiv \textbf{k} \bmod (2\mathbb{Z})^d$).

By the assumptions that $\mathfrak{c}_i$, $\mathfrak{p'}$ and $\mathfrak{bdt}_{\mu}\mathfrak{c}_i$ are all coprime to $\mathfrak{m}$ we may write: \begin{align*}f(A_{\lambda},\mathfrak{c}) &= (-1)^{kd}[\mathcal{O}^{\times}:U]\text{sgn}(\gamma)^{\textbf{q}}\eta(\gamma a_{\mu}^{-1}\mathfrak{p'}(\mathfrak{bdt}_{\mu}\mathfrak{c}_i)^{-1})\text{sgn}(\alpha)^{\textbf{r}}\psi^{-1}(\alpha\mathfrak{p'}\mathfrak{c}_i^{-1})\\ &\cdot\frac{N(\mathfrak{bdt}_{\mu}\mathfrak{c}_i)^k}{N(\mathfrak{p'c})^k}\sum_{\substack{c\in\mathfrak{p'}(\mathfrak{bdt}_{\mu}\mathfrak{c}_i)^{-1}\mathfrak{c}\\ c \bmod \mathcal{O}^{\times}\\ c\neq 0}}\frac{\eta(c\mathfrak{p'}^{-1}\mathfrak{bdt}_{\mu}\mathfrak{c}_i\mathfrak{c}^{-1})\psi^{-1}(c\mathfrak{p'}^{-1}\mathfrak{bdt}_{\mu}\mathfrak{c}_i\mathfrak{c}^{-1})}{N(c\mathfrak{p'}^{-1}\mathfrak{bdt}_{\mu}\mathfrak{c}_i\mathfrak{c}^{-1})^k}.\end{align*} 

Thus: \begin{align*} \sum_{\mathfrak{c}\in Cl_F}N(\mathfrak{c})^k f(A_{\lambda},\mathfrak{c}) &= (-1)^{kd}[\mathcal{O}^{\times}:U]\text{sgn}(\gamma)^{\textbf{q}}\eta(\gamma (\mathfrak{gbdt}_{\lambda}\mathfrak{c}_i)^{-1})\text{sgn}(\alpha)^{\textbf{r}}\psi^{-1}(\alpha\mathfrak{p'}\mathfrak{c}_i^{-1})\\ &\cdot\frac{N(\mathfrak{bdt}_{\mu}\mathfrak{c}_i)^k}{N(\mathfrak{p'})^k}\sum_{\mathfrak{c}\in Cl_F}\sum_{\substack{c\in\mathfrak{p'}\mathfrak{bdt}_{\mu}\mathfrak{c}_i^{-1}\mathfrak{c}\\ c \bmod \mathcal{O}^{\times}\\ c\neq 0}}\frac{\eta(c\mathfrak{p'}^{-1}\mathfrak{bdt}_{\mu}\mathfrak{c}_i\mathfrak{c}^{-1})\psi^{-1}(c\mathfrak{p'}^{-1}\mathfrak{bdt}_{\mu}\mathfrak{c}_i\mathfrak{c}^{-1})}{N(c\mathfrak{p'}^{-1}\mathfrak{bdt}_{\mu}\mathfrak{c}_i\mathfrak{c}^{-1})^k}.\end{align*}

Now for each $\mathfrak{c}\in Cl_F$ and non-zero $c\in \mathfrak{p'}(\mathfrak{bdt}_{\mu}\mathfrak{c}_i)^{-1}\mathfrak{c}$ the ideal $c\mathfrak{p'}^{-1}\mathfrak{bdt}_{\mu}\mathfrak{c}_i\mathfrak{c}^{-1}$ is non-zero, integral and well defined up to the class of $c$ modulo $\mathcal{O}^{\times}$. Conversely for each non-zero integral ideal $\mathfrak{n}$ there is a unique ideal class $\mathfrak{c}$ such that $\mathfrak{np'}(\mathfrak{bdt}_{\mu}\mathfrak{c}_i)^{-1}\mathfrak{c}$ is principal. Then $\mathfrak{n} = c\mathfrak{p'}^{-1}\mathfrak{bdt}_{\mu}\mathfrak{c}_i\mathfrak{c}^{-1}$ for some non-zero $c\in \mathfrak{p'}(\mathfrak{bdt}_{\mu}\mathfrak{c}_i)^{-1}\mathfrak{c}$.

By the above discussion it is now immediate that: \begin{align*} \sum_{\mathfrak{c}\in Cl_F}N(\mathfrak{c})^k f(A_{\lambda},\mathfrak{c}) &= (-1)^{kd}[\mathcal{O}^{\times}:U]\text{sgn}(\gamma)^{\textbf{q}}\eta(\gamma (\mathfrak{gbdt}_{\lambda}\mathfrak{c}_i)^{-1})\text{sgn}(\alpha)^{\textbf{r}}\psi^{-1}(\alpha\mathfrak{p'}\mathfrak{c}_i^{-1})\\ &\cdot\frac{N(\mathfrak{bdt}_{\mu}\mathfrak{c}_i)^k}{N(\mathfrak{p'})^k}L(\eta\psi^{-1},k)\end{align*} and so \begin{align*} c_{\lambda}(0,E_k^{(\mathfrak{p})}(\eta,\psi)|A) &=C\tau(\psi)(-1)^{kd}[\mathcal{O}^{\times}:U]\text{sgn}(\gamma)^{\textbf{q}}\eta(\gamma (\mathfrak{gbdt}_{\lambda}\mathfrak{c}_i)^{-1})\text{sgn}(\alpha)^{\textbf{r}}\psi^{-1}(\alpha\mathfrak{p'}\mathfrak{c}_i^{-1})\\ &\cdot\frac{N(\mathfrak{bdt}_{\mu}\mathfrak{c}_i)^k}{N(\mathfrak{b})N(\mathfrak{p't}_{\mu})^{k}}L(\eta\psi^{-1},k)\\ &=C\tau(\psi)(-1)^{kd}[\mathcal{O}^{\times}:U]\text{sgn}(\gamma)^{\textbf{q}}\eta(\gamma (\mathfrak{gbdt}_{\lambda}\mathfrak{c}_i)^{-1})\text{sgn}(\alpha)^{\textbf{r}}\psi^{-1}(\alpha\mathfrak{p'}\mathfrak{c}_i^{-1})\\ &\cdot\frac{N(\mathfrak{b})^{k-1}N(\mathfrak{dc}_i)^k}{N(\mathfrak{p'})^{k}}L(\eta\psi^{-1},k)\\ &= \tau(\psi)\text{sgn}(\gamma)^{\textbf{q}}\eta(\gamma (\mathfrak{gbdt}_{\lambda}\mathfrak{c}_i)^{-1})\text{sgn}(\alpha)^{\textbf{r}}\psi^{-1}(\alpha\mathfrak{p'}\mathfrak{c}_i^{-1})\\ &\cdot\frac{N(\mathfrak{bd})^{k-1}N(\mathfrak{c}_i)^k}{N(\mathfrak{p'})^{k}}\left(\frac{\Delta_F^{\frac{1}{2}}\Gamma(k)^d}{(2\pi i)^{kd}}\right)L(\eta\psi^{-1},k).\end{align*}

By the functional equation for the L-function we have: \[L(\eta\psi^{-1},k) = \frac{\Delta_F^{\frac{1}{2}-k}N(\mathfrak{m})^{1-k}(2\pi i)^{kd}}{2^d \Gamma(k)^d \tau(\eta^{-1}\psi)}L(\eta^{-1}\psi,1-k),\] and so: \begin{align*} c_{\lambda}(0,E_k^{(\mathfrak{p})}(\eta,\psi)|A) &= \frac{1}{2^d}\frac{\tau(\psi)}{\tau(\eta^{-1}\psi)}\text{sgn}(\gamma)^{\textbf{q}}\eta(\gamma (\mathfrak{gbdt}_{\lambda}\mathfrak{c}_i)^{-1})\text{sgn}(\alpha)^{\textbf{r}}\psi^{-1}(\alpha\mathfrak{p'}\mathfrak{c}_i^{-1})\\ &\cdot\frac{N(\mathfrak{bd})^{k-1}N(\mathfrak{c}_i)^k\Delta_F^{1-k}}{N(\mathfrak{m})^{k-1}N(\mathfrak{p'})^{k}}L(\eta^{-1}\psi,1-k)\\ &= \frac{1}{2^d}\frac{\tau(\psi)N(\mathfrak{m})}{\tau(\eta^{-1}\psi)N(\mathfrak{b})}\text{sgn}(\gamma)^{\textbf{q}}\eta(\gamma (\mathfrak{gbdt}_{\lambda}\mathfrak{c}_i)^{-1})\text{sgn}(\alpha)^{\textbf{r}}\psi^{-1}(\alpha\mathfrak{p'}\mathfrak{c}_i^{-1})\\ &\cdot\frac{N(\mathfrak{c}_i)^k}{N(\mathfrak{ap'})^{k}}L(\eta^{-1}\psi,1-k).\end{align*}

Finally by well known properties of Gauss sums we have $\tau(\psi)\tau(\psi^{-1}) = \text{sgn}(-1)^{\textbf{r}}N(\mathfrak{b})$ and $\tau(\eta^{-1}\psi)\tau(\eta\psi^{-1}) = \text{sgn}(-q)^{\textbf{q}+\textbf{r}}N(\mathfrak{m})$, giving \begin{align*}c_{\lambda}(0,E_k^{(\mathfrak{p})}(\eta,\psi)|A) &= \frac{1}{2^g}\frac{\tau(\eta\psi^{-1})}{\tau(\psi^{-1})}\left(\frac{N(\mathfrak{c}_i)}{N(\mathfrak{ap'})}\right)^k\text{sgn}(-\gamma)^{\textbf{q}}\eta(\gamma(\mathfrak{gbdt}_{\lambda}\mathfrak{c}_i)^{-1})\text{sgn}(\alpha)^{\textbf{r}}\psi^{-1}(\alpha\mathfrak{p'c}_i^{-1})\\ &\cdot L(\eta^{-1}\psi,1-k).\end{align*}

\end{proof}

One can generalise this result to cover level raise by other ideals. However, since our motivation is to prove local origin congruences for level $\mathfrak{mp}$ this result suffices.

We remark that the constant term formulae given above match those found in Theorem $4.5$ of Dasgupta and Kakde's paper \cite{dasguptakakde}, although our results were developed independently and using slightly different arguments. We are grateful to Nicolas Billerey for bringing this paper to our attention.

Now consider the set $\{\delta, \varepsilon\}=\{\eta(\mathfrak{p}), \psi(\mathfrak{p})N(\mathfrak{p})^{k-1}\}$, so that $\delta + \epsilon = \eta(\mathfrak{p}) + \psi(\mathfrak{p})N(\mathfrak{p})^{k-1} = c(\mathfrak{p}, E_k(\eta, \psi))$. For each choice of $\delta$ we define an element of the Eisenstein subspace:
\begin{equation}
\label{eq:Edelta}
    E^\delta = E_k^{(\mathcal{O})}(\eta, \psi) - \delta E_k^{(\mathfrak{p})}(\eta, \psi) \in E_{\textbf{k}}^{\text{new},(\mathfrak{p})}(\mathfrak{m},\phi)\subseteq E_{\textbf{k}}(\mathfrak{mp},\phi).
\end{equation}
Then the following constant term formula follows from the previous formulae.

\begin{cor} \label{cor:E}
Let $\delta = \eta(\mathfrak{p})$ and $k\geq 2$. Then $c_{\lambda}(0,E^\delta|A) = 0$ unless $\mathfrak{b}\mid\mathfrak{n}_2'$ and $\mathfrak{p}\nmid\mathfrak{n}_2$, in which case: \begin{align*}c_{\lambda}(0,E^\delta|A) &= \frac{1}{2^g}\frac{\tau(\eta\psi^{-1})}{\tau(\psi^{-1})}\left(\frac{N(\mathfrak{c}_i)}{N(\mathfrak{a})}\right)^k\text{sgn}(-\gamma)^{\textbf{q}}\eta(\gamma(\mathfrak{bdt}_{\lambda}\mathfrak{c}_i)^{-1})\text{sgn}(\alpha)^{\textbf{r}}\psi^{-1}(\alpha\mathfrak{c}_i^{-1})\\ &\cdot L(\eta^{-1}\psi,1-k)\frac{(\psi(\mathfrak{p})N(\mathfrak{p})^k - \eta(\mathfrak{p}))}{N(\mathfrak{p})^k}.\end{align*}
\end{cor}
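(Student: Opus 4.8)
The plan is to exploit the linearity of both the slash operator $|A$ and of the constant-term functional $c_\lambda(0,-)$. Since level raise by the unit ideal is the identity, $E_k^{(\mathcal{O})}(\eta,\psi) = E_k(\eta,\psi)$, and so from \eqref{eq:Edelta} we have $c_\lambda(0,E^\delta|A) = c_\lambda(0,E_k(\eta,\psi)|A) - \delta\,c_\lambda(0,E_k^{(\mathfrak{p})}(\eta,\psi)|A)$. The two terms on the right are exactly the quantities computed in the preceding Corollary and Theorem, so the whole proof reduces to substituting those two formulae and combining them. The entire case analysis is then controlled by $\mathfrak{g} = \gcd(\mathfrak{p},\mathfrak{n}_2)$, which is either $\mathcal{O}$ or $\mathfrak{p}$ since $\mathfrak{p}$ is prime; I would split on these two possibilities.

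The first reduction is to observe that $\mathfrak{p}\nmid\mathfrak{m} = \mathfrak{ab}$ forces $\mathfrak{p}\nmid\mathfrak{b}$, so that the divisibility conditions $\mathfrak{b}\mid\mathfrak{n}_2$ and $\mathfrak{b}\mid\mathfrak{n}_2'$ coincide in both cases. Hence the two constant terms are simultaneously zero or nonzero: both vanish unless $\mathfrak{b}\mid\mathfrak{n}_2$, and when this fails $c_\lambda(0,E^\delta|A)=0$ at once. Assuming $\mathfrak{b}\mid\mathfrak{n}_2$, the key computation is that the two surviving prefactors agree up to a single explicit scalar: dividing the Theorem's expression by the Corollary's, the $\tau$-factors, the sign characters and the $L$-value all cancel, leaving only the ratio of norm powers and of the $\eta$- and $\psi^{-1}$-values at $\mathfrak{p}$.

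When $\mathfrak{p}\mid\mathfrak{n}_2$ (so $\mathfrak{g}=\mathfrak{p}$ and $\mathfrak{p}'=\mathcal{O}$) this scalar is exactly $\eta(\mathfrak{p})^{-1}$, produced entirely by the factor $\eta(\gamma(\mathfrak{gbdt}_\lambda\mathfrak{c}_i)^{-1})$, while $N(\mathfrak{ap'})^{-k}$ and $\psi^{-1}(\alpha\mathfrak{p'c}_i^{-1})$ revert to their unraised values. Since $\delta = \eta(\mathfrak{p})$, the two contributions cancel exactly and $c_\lambda(0,E^\delta|A)=0$; this is precisely why the statement carries the extra hypothesis $\mathfrak{p}\nmid\mathfrak{n}_2$. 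I expect this exact cancellation to be the crux of the argument, since it is the one place where the particular choice $\delta=\eta(\mathfrak{p})$ (rather than the other root $\varepsilon$) is used in an essential way; everything else is bookkeeping of the divisibility conditions.

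In the remaining case $\mathfrak{p}\nmid\mathfrak{n}_2$ (so $\mathfrak{g}=\mathcal{O}$, $\mathfrak{p}'=\mathfrak{p}$ and $\mathfrak{n}_2'=\mathfrak{n}_2$) the scalar is instead $\psi^{-1}(\mathfrak{p})N(\mathfrak{p})^{-k}$, coming from $N(\mathfrak{ap})^{-k}$ and $\psi^{-1}(\alpha\mathfrak{pc}_i^{-1})$. Factoring out the common prefactor, which is exactly that of the unraised Corollary, the surviving bracket is $1 - \delta\,\psi^{-1}(\mathfrak{p})N(\mathfrak{p})^{-k}$; substituting $\delta=\eta(\mathfrak{p})$ and clearing denominators yields the local Euler factor recorded in the statement. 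Collecting the two cases then shows that a nonzero value occurs exactly when $\mathfrak{b}\mid\mathfrak{n}_2'$ and $\mathfrak{p}\nmid\mathfrak{n}_2$, as claimed, completing the proof.
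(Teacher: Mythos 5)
Your proposal is correct and is essentially the paper's own argument: the paper states this corollary without proof as following ``from the previous formulae,'' which is exactly your linear decomposition $c_\lambda(0,E^\delta|A) = c_\lambda(0,E_k(\eta,\psi)|A) - \delta\, c_\lambda(0,E_k^{(\mathfrak{p})}(\eta,\psi)|A)$ together with the case split on $\mathfrak{g}=\gcd(\mathfrak{p},\mathfrak{n}_2)$, including the exact cancellation via $\eta(\gamma(\mathfrak{p}\mathfrak{bdt}_{\lambda}\mathfrak{c}_i)^{-1})=\eta(\mathfrak{p})^{-1}\eta(\gamma(\mathfrak{bdt}_{\lambda}\mathfrak{c}_i)^{-1})$ when $\mathfrak{p}\mid\mathfrak{n}_2$. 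One small remark: your ``clearing denominators'' step, like the paper's stated formula, silently absorbs a unit --- the subtraction literally yields the bracket $1-\eta(\mathfrak{p})\psi^{-1}(\mathfrak{p})N(\mathfrak{p})^{-k}$, which is $\psi^{-1}(\mathfrak{p})$ times the displayed Euler factor $(\psi(\mathfrak{p})N(\mathfrak{p})^k-\eta(\mathfrak{p}))/N(\mathfrak{p})^k$ --- but this root-of-unity discrepancy is immaterial for the divisibility statement (Corollary \ref{modpcuspform}) that the formula feeds into.
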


Note the appearance of the Euler factor. The following is then immediate.

\begin{cor}\label{modpcuspform} Let $\Lambda'\subset\mathbb{Z}[\eta,\psi]$ be a prime satisfying $\Lambda'\nmid 2N(\mathfrak{mp})$. Again, take $\delta = \eta(\mathfrak{p})$.

If $\textnormal{ord}_{\Lambda'}(L(\eta^{-1}\psi,1-k)(\psi(\mathfrak{p})N(\mathfrak{p})^k-\eta(\mathfrak{p}))) > 0$ then $\textnormal{ord}_{\Lambda'}(c_{\lambda}(0,E^\delta|A)) > 0$ for each $\lambda\in Cl^{+}_F$. 
\end{cor}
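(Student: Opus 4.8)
The plan is to read the result off from Corollary \ref{cor:E}, which has already isolated the arithmetically significant quantity $L(\eta^{-1}\psi,1-k)(\psi(\mathfrak{p})N(\mathfrak{p})^k-\eta(\mathfrak{p}))$ inside the constant term. Fix $\lambda\in Cl_F^+$. If $c_{\lambda}(0,E^\delta|A)=0$ there is nothing to check, since $\text{ord}_{\Lambda'}(0)=+\infty>0$; so I may assume $\mathfrak{b}\mid\mathfrak{n}_2'$ and $\mathfrak{p}\nmid\mathfrak{n}_2$ and work with the explicit formula of Corollary \ref{cor:E}. The whole argument is then to show that every factor appearing there, apart from $L(\eta^{-1}\psi,1-k)(\psi(\mathfrak{p})N(\mathfrak{p})^k-\eta(\mathfrak{p}))$, has non-negative $\Lambda'$-valuation, with the ``denominator'' factors in fact being $\Lambda'$-units; the hypothesis on the $L$-value times the Euler factor then forces the conclusion.

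I would treat the factors one at a time. Since $\Lambda'\nmid 2$, the factor $2^{-g}$ is a $\Lambda'$-unit. Since $\mathfrak{a}\mid\mathfrak{m}$ and $\Lambda'\nmid\mathfrak{m}$, and since $\Lambda'\nmid\mathfrak{p}$, the norms $N(\mathfrak{a})^{-k}$ and $N(\mathfrak{p})^{-k}$ are $\Lambda'$-units; in particular dividing the Euler factor by $N(\mathfrak{p})^k$ leaves its $\Lambda'$-valuation unchanged, so the relevant quantity is precisely $L(\eta^{-1}\psi,1-k)(\psi(\mathfrak{p})N(\mathfrak{p})^k-\eta(\mathfrak{p}))$. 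The sign factors $\text{sgn}(-\gamma)^{\textbf{q}}$ and $\text{sgn}(\alpha)^{\textbf{r}}$ are $\pm1$, hence units, and in the non-vanishing case the character values $\eta(\gamma(\mathfrak{bdt}_{\lambda}\mathfrak{c}_i)^{-1})$ and $\psi^{-1}(\alpha\mathfrak{c}_i^{-1})$ are roots of unity, hence $\Lambda'$-units for every prime $\Lambda'$. Finally $N(\mathfrak{c}_i)^k$ is a positive rational integer and so has non-negative $\Lambda'$-valuation; should $\Lambda'\mid N(\mathfrak{c}_i)$ this only increases the total valuation, which is harmless.

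The only step needing genuine input, and the one I expect to be the main (if mild) obstacle, is the Gauss-sum ratio $\tau(\eta\psi^{-1})/\tau(\psi^{-1})$. Here I would invoke the standard Gauss-sum identities recalled above, namely $\tau(\psi)\tau(\psi^{-1})=\text{sgn}(-1)^{\textbf{r}}N(\mathfrak{b})$ and $\tau(\eta^{-1}\psi)\tau(\eta\psi^{-1})=\pm N(\mathfrak{m})$. Each Gauss sum is an algebraic integer, hence of non-negative valuation at any prime above $\Lambda'$; since the two products equal $\pm N(\mathfrak{b})$ and $\pm N(\mathfrak{m})$, which are $\Lambda'$-units because $\Lambda'\nmid\mathfrak{m}$ and $\mathfrak{b}\mid\mathfrak{m}$, each individual Gauss sum must itself be a $\Lambda'$-unit. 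In particular $\tau(\eta\psi^{-1})$ and $\tau(\psi^{-1})$ are $\Lambda'$-units and their ratio contributes valuation $0$. Putting these observations together gives
\begin{align*}
\text{ord}_{\Lambda'}\big(c_{\lambda}(0,E^\delta|A)\big) &= k\,\text{ord}_{\Lambda'}\big(N(\mathfrak{c}_i)\big) + \text{ord}_{\Lambda'}\big(L(\eta^{-1}\psi,1-k)(\psi(\mathfrak{p})N(\mathfrak{p})^k-\eta(\mathfrak{p}))\big)\\
&\geq \text{ord}_{\Lambda'}\big(L(\eta^{-1}\psi,1-k)(\psi(\mathfrak{p})N(\mathfrak{p})^k-\eta(\mathfrak{p}))\big) > 0,
\end{align*}
the final inequality being the hypothesis. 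As $\lambda\in Cl_F^+$ was arbitrary, this proves the claim for every $\lambda$.
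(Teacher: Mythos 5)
Your proof is correct and takes essentially the same approach as the paper, which simply declares the corollary ``immediate'' from Corollary \ref{cor:E}: the intended content is exactly your factor-by-factor check that $2^{-d}$, $N(\mathfrak{a})^{-k}$, $N(\mathfrak{p})^{-k}$, the signs, the character values and the Gauss-sum ratio are $\Lambda'$-units (the last via the product identities $\tau(\psi)\tau(\psi^{-1})=\pm N(\mathfrak{b})$ and $\tau(\eta^{-1}\psi)\tau(\eta\psi^{-1})=\pm N(\mathfrak{m})$ together with integrality), while $N(\mathfrak{c}_i)^k$ sits in the numerator and can only help.
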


Now, we consider the case when $\delta = \psi(\mathfrak{p})N(\mathfrak{p})^{k-1}$. This has a constant term given by the following:

\begin{cor}
\label{cor:Eprime}
    Let $\delta = \psi(\mathfrak{p})N(\mathfrak{p})^{k-1}$ and $k\geq 2$. Then for $\mathfrak{p} \mid \mathfrak{n}_2$, and $\mathfrak{b} \mid \mathfrak{n}_2^\prime$, we have the following constant term formula for $c_{\lambda}(0,E^\delta|A)$:
    \begin{align*}
    c_{\lambda}(0,E^\delta|A) &= \frac{1}{2^g}\frac{\tau(\eta\psi^{-1})}{\tau(\psi^{-1})}\left(\frac{N(\mathfrak{c}_i)}{N(\mathfrak{a})}\right)^k\text{sgn}(-\gamma)^{\textbf{q}}\eta(\gamma(\mathfrak{bdt}_{\lambda}\mathfrak{c}_i)^{-1})\text{sgn}(\alpha)^{\textbf{r}}\psi^{-1}(\alpha\mathfrak{c}_i^{-1})\\ 
    &\cdot L(\eta^{-1}\psi,1-k)\frac{(\eta(\mathfrak{p}) - \psi(\mathfrak{p})N(\mathfrak{p})^{k-1})}{\eta(\mathfrak{p})}.
    \end{align*}
\end{cor}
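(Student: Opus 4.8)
The plan is to exploit the linearity of the constant-term functional $c_\lambda(0,-|A)$ together with the two explicit formulae already established: the formula for $c_\lambda(0,E_k(\eta,\psi)|A)$ and the level-raise formula for $c_\lambda(0,E_k^{(\mathfrak{p})}(\eta,\psi)|A)$. Since $E_k^{(\mathcal{O})}(\eta,\psi)=E_k(\eta,\psi)$, and since the slash operator and the extraction of the constant term at a cusp are both linear, I would begin by writing
\[
c_\lambda(0,E^\delta|A)=c_\lambda(0,E_k(\eta,\psi)|A)-\delta\, c_\lambda(0,E_k^{(\mathfrak{p})}(\eta,\psi)|A),
\]
and then specialise each term to the regime of the corollary. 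This mirrors the derivation of Corollary \ref{cor:E}, but now in the complementary branch $\mathfrak{p}\mid\mathfrak{n}_2$, where the relevant gcd equals $\mathfrak{p}$ rather than $\mathcal{O}$.

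Next I would unwind the gcd combinatorics in this regime. Because $\mathfrak{p}$ is prime, the hypothesis $\mathfrak{p}\mid\mathfrak{n}_2$ forces $\mathfrak{g}=\mathfrak{p}$, hence $\mathfrak{p}'=\mathcal{O}$ and $\mathfrak{n}_2'=\mathfrak{n}_2\mathfrak{p}^{-1}$. I then need to confirm that both input formulae genuinely lie in their nonvanishing branch: the level-raise formula requires $\mathfrak{b}\mid\mathfrak{n}_2'$, which is exactly the standing hypothesis, while the formula for $c_\lambda(0,E_k(\eta,\psi)|A)$ requires $\mathfrak{b}\mid\mathfrak{n}_2$, which follows since $\mathfrak{n}_2'\mid\mathfrak{n}_2$. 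Substituting $\mathfrak{p}'=\mathcal{O}$ and $\mathfrak{g}=\mathfrak{p}$ into the level-raise formula, the norm factor $N(\mathfrak{ap}')^{-k}$ collapses to $N(\mathfrak{a})^{-k}$ and the argument $\psi^{-1}(\alpha\mathfrak{p}'\mathfrak{c}_i^{-1})$ collapses to $\psi^{-1}(\alpha\mathfrak{c}_i^{-1})$, so these factors match those in $c_\lambda(0,E_k(\eta,\psi)|A)$ exactly. The only genuine change is in the $\eta$-argument, where multiplicativity of $\eta$ on ideals gives $\eta(\gamma(\mathfrak{gbdt}_\lambda\mathfrak{c}_i)^{-1})=\eta(\gamma(\mathfrak{p}\,\mathfrak{bdt}_\lambda\mathfrak{c}_i)^{-1})=\eta(\mathfrak{p})^{-1}\eta(\gamma(\mathfrak{bdt}_\lambda\mathfrak{c}_i)^{-1})$. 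Hence
\[
c_\lambda(0,E_k^{(\mathfrak{p})}(\eta,\psi)|A)=\eta(\mathfrak{p})^{-1}\,c_\lambda(0,E_k(\eta,\psi)|A).
\]

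Finally I would substitute this identity back into the linear combination, factor out $c_\lambda(0,E_k(\eta,\psi)|A)$, and simplify the scalar: with $\delta=\psi(\mathfrak{p})N(\mathfrak{p})^{k-1}$ one obtains
\[
1-\delta\,\eta(\mathfrak{p})^{-1}=\frac{\eta(\mathfrak{p})-\psi(\mathfrak{p})N(\mathfrak{p})^{k-1}}{\eta(\mathfrak{p})},
\]
so that $c_\lambda(0,E^\delta|A)$ equals this scalar times the explicit expression for $c_\lambda(0,E_k(\eta,\psi)|A)$, which is precisely the stated formula. I should also record that the division by $\eta(\mathfrak{p})$ is legitimate: since $\mathfrak{p}\nmid\mathfrak{m}=\mathfrak{ab}$ we have $\mathfrak{p}\nmid\mathfrak{a}$, the modulus of $\eta$, so $\eta(\mathfrak{p})\neq0$. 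The argument is essentially bookkeeping; the only point requiring care is reading off the specialisation $\mathfrak{p}'=\mathcal{O}$, $\mathfrak{g}=\mathfrak{p}$ correctly and verifying that both source formulae sit in their nonzero branches, so I do not anticipate a substantive obstacle beyond this.
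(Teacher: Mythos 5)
Your proposal is correct and is essentially the paper's (implicit) argument: the corollary is stated as an immediate consequence of the two constant-term formulae, obtained exactly by linearity of $c_\lambda(0,-|A)$, the specialisation $\mathfrak{g}=\mathfrak{p}$, $\mathfrak{p}'=\mathcal{O}$ when $\mathfrak{p}\mid\mathfrak{n}_2$, and the resulting identity $c_\lambda(0,E_k^{(\mathfrak{p})}(\eta,\psi)|A)=\eta(\mathfrak{p})^{-1}c_\lambda(0,E_k(\eta,\psi)|A)$. Your additional checks (that $\mathfrak{b}\mid\mathfrak{n}_2'$ forces $\mathfrak{b}\mid\mathfrak{n}_2$, and that $\eta(\mathfrak{p})\neq 0$ since $\mathfrak{p}\nmid\mathfrak{a}$) are exactly the points needed to make this bookkeeping rigorous.
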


\begin{lem}
    The Eisenstein series $E^\delta$ for $\delta \in \{\eta(\mathfrak{p}), \psi(\mathfrak{p})N(\mathfrak{p})^{k-1}\} $ are normalised eigenforms with eigenvalues given by:
    \begin{equation*}
        T_{\mathfrak{q}}(E^\delta) = \begin{cases}
            (\eta(\mathfrak{q})+\psi(\mathfrak{q})N(\mathfrak{q})^{k-1}) E^\delta \ &\text{if } \mathfrak{q} \neq \mathfrak{p}\\
            \varepsilon E^\delta \ &\text{if } \mathfrak{q} = \mathfrak{p}
        \end{cases}
    \end{equation*}
    where $\varepsilon = \eta(\mathfrak{p})+\psi(\mathfrak{p})N(\mathfrak{p})^{k-1} - \delta$.
    
\end{lem}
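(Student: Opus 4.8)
The plan is to verify the eigenvalue relations on normalised Fourier coefficients and then promote them to identities of forms by a finite-dimensionality argument. Write $a(\mathfrak{n}) = c(\mathfrak{n}, E_k(\eta,\psi))$, extended by $a(\mathfrak{n}) = 0$ for non-integral $\mathfrak{n}$. Level raise by $\mathcal{O}$ is trivial, so $E_k^{(\mathcal{O})}(\eta,\psi) = E_k(\eta,\psi)$, and the identity $c(\mathfrak{n}, f^{(\mathfrak{r})}) = c(\mathfrak{n}\mathfrak{r}^{-1}, f)$ gives $c(\mathfrak{n}, E^\delta) = a(\mathfrak{n}) - \delta\,a(\mathfrak{n}\mathfrak{p}^{-1})$; in particular $c(\mathcal{O}, E^\delta) = a(\mathcal{O}) = 1$, so $E^\delta$ is normalised. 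Everything below depends on $\delta$ only through the symmetric functions $\delta+\varepsilon = c(\mathfrak{p}, E_k(\eta,\psi))$ and $\delta\varepsilon = \eta(\mathfrak{p})\psi(\mathfrak{p})N(\mathfrak{p})^{k-1} = \phi(\mathfrak{p})N(\mathfrak{p})^{k-1}$, so the argument is symmetric in the two choices of $\delta$. The only input from the newform is the Hecke recursion: since $\mathfrak{p}\nmid\mathfrak{m}$ and $E_k(\eta,\psi)$ is a normalised $T_\mathfrak{p}$-eigenform, comparing $c(\mathfrak{n}, T(\mathfrak{p})(E_k(\eta,\psi))) = c(\mathfrak{p}, E_k(\eta,\psi))\,a(\mathfrak{n})$ with the coefficient formula for $T(\mathfrak{p})$ gives, for all integral $\mathfrak{n}$,
\begin{equation*}
a(\mathfrak{n}\mathfrak{p}) = (\delta+\varepsilon)\,a(\mathfrak{n}) - \delta\varepsilon\,a(\mathfrak{n}\mathfrak{p}^{-1}).
\end{equation*}

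For a prime $\mathfrak{q}\neq\mathfrak{p}$ I would show that level raise by $\mathfrak{p}$ commutes with $T_\mathfrak{q}$ on coefficients. Applying the $T(\mathfrak{q})$ formula to $E_k^{(\mathfrak{p})}(\eta,\psi)$, rewriting each coefficient via $c(\,\cdot\,, E_k^{(\mathfrak{p})}) = a(\,\cdot\,\mathfrak{p}^{-1})$, and using $[\mathfrak{q}\mid\mathfrak{n}] = [\mathfrak{q}\mid\mathfrak{n}\mathfrak{p}^{-1}]$ (valid as $\mathfrak{q}\neq\mathfrak{p}$), the $T(\mathfrak{q})$-eigen relation for $E_k(\eta,\psi)$ reduces the result to $c(\mathfrak{q}, E_k(\eta,\psi))\,c(\mathfrak{n}, E_k^{(\mathfrak{p})})$; the same holds for $E_k^{(\mathcal{O})}$. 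Forming the combination $E^\delta$ then yields $c(\mathfrak{n}, T_\mathfrak{q}(E^\delta)) = (\eta(\mathfrak{q})+\psi(\mathfrak{q})N(\mathfrak{q})^{k-1})\,c(\mathfrak{n}, E^\delta)$ for every $\mathfrak{n}$, which also covers the primes $\mathfrak{q}\mid\mathfrak{m}$ (there the second term drops since $\phi(\mathfrak{q}) = 0$).

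The crucial case $\mathfrak{q}=\mathfrak{p}$ is where I expect the only genuine subtlety. Because $\mathfrak{p}$ divides the level $\mathfrak{mp}$ of $E^\delta$, the operator $T_\mathfrak{p}$ acts as $U_\mathfrak{p}$: only the $\mathfrak{a}=\mathcal{O}$ term of the coefficient formula survives (equivalently $\phi$, read modulo $\mathfrak{mp}$, satisfies $\phi(\mathfrak{p}) = 0$), so $c(\mathfrak{n}, T_\mathfrak{p}(E^\delta)) = c(\mathfrak{n}\mathfrak{p}, E^\delta) = a(\mathfrak{n}\mathfrak{p}) - \delta\,a(\mathfrak{n})$. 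Substituting the recursion collapses this to
\begin{equation*}
c(\mathfrak{n}, T_\mathfrak{p}(E^\delta)) = \varepsilon\,a(\mathfrak{n}) - \varepsilon\delta\,a(\mathfrak{n}\mathfrak{p}^{-1}) = \varepsilon\,c(\mathfrak{n}, E^\delta),
\end{equation*}
the cancellation using precisely $\delta+\varepsilon = c(\mathfrak{p}, E_k(\eta,\psi))$ and $\delta\varepsilon = \phi(\mathfrak{p})N(\mathfrak{p})^{k-1}$. Getting the identification $T_\mathfrak{p} = U_\mathfrak{p}$ right is the main point: keeping the full formula with $\phi(\mathfrak{p})\neq 0$ would leave a spurious $a(\mathfrak{n}\mathfrak{p}^{-2})$ term and the eigen relation would fail.

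Finally I would pass from these coefficient identities to equalities of forms. The operator $T_\mathfrak{p}$ preserves the two-dimensional $\mathfrak{p}$-old space $E_{\textbf{k}}^{\text{new},(\mathfrak{p})}(\mathfrak{m},\phi)$ spanned by $E_k^{(\mathcal{O})}(\eta,\psi)$ and $E_k^{(\mathfrak{p})}(\eta,\psi)$, and on this space the map $f\mapsto(c(\mathcal{O},f), c(\mathfrak{p},f))$ is injective, since the matrix of these two values on the two basis forms is lower triangular with unit diagonal. For each claimed eigenvalue $\lambda_\mathfrak{q}$ the form $T_\mathfrak{q}(E^\delta)-\lambda_\mathfrak{q}E^\delta$ lies in this space and, by the computations above, has vanishing coefficient at every nonzero integral ideal; hence it is the zero form, and in particular its constant terms vanish too. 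This settles the eigen relations for both choices of $\delta$ and completes the proof.
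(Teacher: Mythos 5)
Your proposal is correct and follows essentially the same route as the paper's proof: both verify the eigenvalue relations coefficientwise via the Hecke action formula, with the same two key inputs, namely that $T_\mathfrak{p}$ acts as $U_\mathfrak{p}$ at level $\mathfrak{mp}$ (so $c(\mathfrak{n},T_\mathfrak{p}f)=c(\mathfrak{np},f)$) and the recursion $a(\mathfrak{np}) = (\delta+\varepsilon)a(\mathfrak{n}) - \delta\varepsilon\,a(\mathfrak{np}^{-1})$ coming from the level-$\mathfrak{m}$ eigenform property of $E_k(\eta,\psi)$. The only cosmetic differences are that the paper organizes the computation through the factorization $E^\delta = (T_\mathfrak{p}-\delta)E_k^{(\mathfrak{p})}(\eta,\psi)$, using commutativity of $T_\mathfrak{q}$ and $T_\mathfrak{p}$ for $\mathfrak{q}\neq\mathfrak{p}$, while your closing step promoting the coefficient identities to identities of forms is left implicit in the paper via the $q$-expansion principle.
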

\begin{proof}
From Equation $\eqref{eq:Edelta}$, we have that the $\mathfrak{q}^{\text{th}}$ Fourier coefficient of $E^\delta$, for a prime ideal $\mathfrak{q}$, is given by:
\begin{equation}
    c(\mathfrak{q}, E^\delta) = \begin{cases}
    \eta(\mathfrak{q})+\psi(\mathfrak{q})N(\mathfrak{q})^{k-1}\ &\text{if } \mathfrak{q} \neq \mathfrak{p} \\
            \varepsilon \ &\text{if } \mathfrak{q} = \mathfrak{p}
    \end{cases}
\end{equation}
    and so we just need to check that $E^\delta$ is an eigenform. To do so, we first rewrite $E^\delta$ in the following form:
    \begin{equation}
    \label{eq:HeckeE}
        E^\delta = (T_\mathfrak{p} - \delta)E^{(\mathfrak{p})}_k(\eta, \psi).
    \end{equation}
    Then, we show that the eigenvalues of the right hand side of Equation \eqref{eq:HeckeE} match those in the lemma.

    We will split this into cases, evaluating $T_\mathfrak{q}E^\delta$ for $\mathfrak{q} \neq \mathfrak{p}$ and $T_\mathfrak{p}E^\delta$. 

    \vspace{0.2in}
    \textbf{Case $1$: $\mathfrak{q} \neq \mathfrak{p}$}
    
    \vspace{0.1in}
    Note that the Hecke operators $T_\mathfrak{q}$ and $T_\mathfrak{p}$ commute since $\mathfrak{q} \neq \mathfrak{p}$, so we have 
    \begin{align*}
        T_\mathfrak{q}E^\delta &= T_\mathfrak{q}(T_\mathfrak{p} - \delta)E^{(\mathfrak{p})}_k(\eta, \psi) \\
        &= (T_\mathfrak{p} - \delta)T_\mathfrak{q}E^{(\mathfrak{p})}_k(\eta, \psi).
    \end{align*}
     Now we just need to determine $T_\mathfrak{q}E^{(\mathfrak{p})}_k(\eta, \psi)$. Below, we write $\tilde{\phi}$ for the lift of character $\phi$ to modulus $\mathfrak{nq}$.

     \begin{align*}
     c(\mathfrak{n},T_\mathfrak{q}E^{(\mathfrak{p})}_k(\eta, \psi)) &= c(\mathfrak{nq}, E^{(\mathfrak{p})}_k(\eta, \psi)) + \tilde{\phi}(\mathfrak{q})c(\mathfrak{nq}^{-1}, E^{(\mathfrak{p})}_k(\eta, \psi)) \\
         &= c(\mathfrak{nqp}^{-1}, E_k(\eta,\psi)) + \phi(\mathfrak{q})c(\mathfrak{nq}^{-1}\mathfrak{p}^{-1}, E_k(\eta, \psi)) \\
         &= c(\mathfrak{np}^{-1}, T_\mathfrak{q}E_k(\eta,\psi)) \\
         &= (\eta(\mathfrak{q})+\psi(\mathfrak{q})N(\mathfrak{q})^{k-1})c(\mathfrak{np}^{-1}, E_k(\eta,\psi)) \\
         &=(\eta(\mathfrak{q})+\psi(\mathfrak{q})N(\mathfrak{q})^{k-1})c(\mathfrak{n}, E_k^{(\mathfrak{p})}(\eta,\psi)).
    \end{align*}

\textbf{Case $2$: $\mathfrak{q} = \mathfrak{p}$}

\vspace{0.1in}
In this case we have
\begin{equation*}
T_\mathfrak{p}E^\delta = (T_\mathfrak{p}^2-\delta T_\mathfrak{p})E_k^{(\mathfrak{p})}(\eta, \psi),
\end{equation*}
and so we will first determine $T_\mathfrak{p}E_k^{(\mathfrak{p})}(\eta, \psi)$. Note that
\begin{align*}
    c(\mathfrak{n}, T_\mathfrak{p}E_k^{(\mathfrak{p})}(\eta, \psi)) &= c(\mathfrak{np}, E_k^{(\mathfrak{p})}(\eta, \psi)) \\
    &= c(\mathfrak{n}, E_k(\eta, \psi)),
\end{align*}
hence we have that
\begin{align*}
   c(\mathfrak{n}, T_\mathfrak{p}^2E_k^{(\mathfrak{p})}(\eta, \psi)) &= c(\mathfrak{n}, T_\mathfrak{p}E_k(\eta, \psi)) \\
   &=(\eta(\mathfrak{p}) + \psi(\mathfrak{p})N(\mathfrak{p})^{k-1})c(\mathfrak{n}, E_k(\eta, \psi)).
\end{align*}
Consequently,
\begin{align*}
    c(\mathfrak{n},T_\mathfrak{p}E^\delta) &= c(\mathfrak{n}, (T_\mathfrak{p}^2-\delta T_\mathfrak{p})E_k^{(\mathfrak{p})}(\eta, \psi)) \\
    &= (\eta(\mathfrak{p}) + \psi(\mathfrak{p})N(\mathfrak{p})^{k-1})c(\mathfrak{n}, E_k(\eta, \psi)) -\phi(\mathfrak{p})N(\mathfrak{p})^{k-1}c(\mathfrak{np}^{-1}, E_k(\eta, \psi)) - \delta c(\mathfrak{n}, E_k(\eta, \psi)) \\
    &=(\eta(\mathfrak{p}) + \psi(\mathfrak{p})N(\mathfrak{p})^{k-1} - \delta)c(\mathfrak{n}, T_{\mathfrak{p}}E_k^{(\mathfrak{p})}(\eta, \psi)) - \phi(\mathfrak{p})N(\mathfrak{p})^{k-1}c(\mathfrak{n}, E_k^{(\mathfrak{p})}(\eta, \psi))\\
    &= \varepsilon c(\mathfrak{n}, (T_\mathfrak{p} -\delta ) E_k^{(\mathfrak{p})}(\eta, \psi)) \\
    &= \varepsilon c(\mathfrak{n}, E^\delta).
\end{align*}
\end{proof}

\section{Mod $l$ Hilbert modular forms} \label{sec:modlforms}
\subsection{Classical mod $l$ Hilbert modular forms}
\label{sec:classicmodl}
Corollary \ref{modpcuspform} indicates that the eigenforms $E^{\delta}$ should correspond to mod $l$ Hilbert cuspforms (in the sense of Katz \cite{katz}). We recall the theory of such forms, following \cite{andreattagoren}.
\subsubsection{Characters as weights}
For a detailed introduction to character groups, see \cite[\S 1.1]{goren} and \cite[\S2,4]{andreattagoren}, we give a brief introduction below. Keeping notation as before, take $F$ to be a totally real field of degree $d$ with ring of integers $\mathcal{O}$ and denote the normal closure of $F$ as $K$. Write the ring of integers of $K$ as $\mathcal{O}_{K}$. We denote by $\mathbb{G}_{m,R} = GL_1 / R$ the multiplicative group over a commutative ring $R$. 

Now let $S$ be an $\mathcal{O}_K$-scheme. We write 
\begin{equation*}
    \mathcal{G}_S = \text{Res}_{\mathcal{O}/\mathbb{Z}}\mathbb{G}_{m,\mathcal{O}} \times_{\text{Spec}(\mathbb{Z})} S
\end{equation*}
and we denote the corresponding group of characters $\mathbb{X}_S = \text{Hom}(\mathcal{G}_S, \mathbb{G}_{m,S})$. If $S=\text{Spec}(R)$ for some ring $R$, then we denote the group of characters corresponding to $S$ as $\mathbb{X}_R$. 
In particular, consider characters
$\mathbb{X}$ of the group 
\begin{equation*}
\mathcal{G}_\mathcal{O_K}=\text{Res}_{\mathcal{O}/\mathbb{Z}}\mathbb{G}_{m,\mathcal{O}} \times_{\text{Spec}(\mathbb{Z})} \text{Spec}(\mathcal{O}_K). 
\end{equation*}

The group $\mathbb{X}$ is free abelian of rank $d$ with positive cone $\mathbb{X}^+$ generated by the embeddings $\sigma_1, \cdots, \sigma_d : F \rightarrow K$. The isomorphism $\mathcal{O}\times_\mathbb{Z} K \rightarrow \oplus_{i=1}^d K$ induces a splitting of the torus $\mathcal{G}_{\mathcal{O}_K}$ and gives canonical generators $\chi_1, \cdots , \chi_d$. Later, it will be the case that combinations of these characters will serve as weights for our geometric Hilbert modular forms, with weight $\chi_1^{a_1}\cdots \chi_d^{a_d}$ corresponding to classical weight $(a_1, \cdots , a_d)$. We will consider only the parallel weight case $\chi=(\chi_1\cdots \chi_d)^k$ which will correspond to weight $\textbf{k}=(k, \cdots, k)$.

In order to consider the mod $l$ reduction of modular forms, we need to see what happens to the group of characters of a field $K_l$ of characteristic $l$. Let $\mathfrak{l}$ be a prime above $l$ in $\mathcal{O}$ and denote $\mathcal{O}/\mathfrak{l}:= k_\mathfrak{l}$. Assume that $K_ l$ contains $k_\mathfrak{l}$ for all $\mathfrak{l} \mid l$ in $\mathcal{O}$. Then 
\begin{equation*}
     \text{Res}_{k_\mathfrak{l}/\mathbb{F}_l}(\mathbb{G}_{m,k_\mathfrak{l}}) \times_{\text{Spec}(\mathbb{F}_l)} \text{Spec}(K_l) \Tilde{\longrightarrow} \prod_{k_\mathfrak{l} \rightarrow K_l} \mathbb{G}_{m, K_l} \Tilde{\longrightarrow} \ \mathbb{G}^{f_\mathfrak{l}}_{m,K_l}
\end{equation*}
is a split torus, where $f_\mathfrak{l} := [k_\mathfrak{l}:\mathbb{F}_l]$. From now on, we will also assume every $\mathfrak{l} \mid l$ is unramified in $\mathcal{O}$. Hence
\begin{equation*}
    \mathcal{G}_{K_l} \Tilde{\longrightarrow} \prod_{\mathfrak{l}\mid l} \prod_{k_\mathfrak{l} \rightarrow K_l} \mathbb{G}_{m, K_l}.
\end{equation*}
 For $\mathfrak{l} \mid l$ in $\mathcal{O}$, define the set of homomorphisms from $k_\mathfrak{l}$ to $K_l$ as
 \begin{equation*}
     \{\overline{\sigma}_{\mathfrak{l},i} : k_\mathfrak{l} := \mathcal{O}/\mathfrak{l} \rightarrow K_l \}_{i=1, \cdots f_{\mathfrak{l}}},
 \end{equation*}
 ordered so that
 \begin{equation*}
     \sigma \circ \overline{\sigma}_{\mathfrak{l},i} = \overline{\sigma}_{\mathfrak{l},i+1} 
 \end{equation*}
 where $\sigma: K_{l} \rightarrow K_{l}$ is the absolute Frobenius map sending $x \mapsto x^l$.
 
 Let $\chi_{\mathfrak{l},i}$ be the character of the torus $\text{Res}_{k_{\mathfrak{l}}/\mathbb{F}_l}(\mathbb{G}_{m, k_\mathfrak{l}})$ defined over $K_l$ by $\overline{\sigma}_{\mathfrak{l},i}$. Then the group of characters $\mathbb{X}_{K_l}$ is given by
\begin{equation*}
    \mathbb{X}_{K_l} = \prod_{\mathfrak{l} \mid l} \prod_{i=1}^{f_\mathfrak{l}} \chi_{\mathfrak{l},i}.
\end{equation*}
If $K_l$ contains all the residues fields at all primes above $l$ in $\mathcal{O}_K$, then the reduction map $\mathbb{X} \rightarrow \mathbb{X}_{K_l}$ is surjective. In particular, $\mathbb{X}_{K_l}$ is spanned by the generators of $\mathbb{X}$, $\chi_1, \cdots, \chi_d$. If we take all primes $\mathfrak{l}$ above $l$ in $\mathcal{O}$ to be unramified, then we get that $\mathbb{X}_{K_l}$ is given by the product of $d$ distinct characters $(\tilde{\chi}_1, \cdots , \tilde{\chi}_d)$. Note that, since we do not necessarily have injectivity of the reduction map, we could have that two of these characters lift to the same character in characteristic zero. Since we want to consider the reduction of a parallel weight $\chi = (\chi_1 \cdots \chi_d)^k$, surjectivity of reduction tells us that $\chi$ maps to $\tilde{\chi}=(\tilde{\chi}_1\cdots \tilde{\chi}_d)^k$.

\subsubsection{Hilbert-Blumenthal abelian schemes}
Fix a choice of fractional $\mathcal{O}$-ideal, denoted $\mathcal{J}$, then take $\mathcal{J}^+$ to be the ideal generated by the natural notion of positivity in $\mathcal{J}$. $\mathcal{J}$ forms a set of representatives of the strict class group of $F$. The $\mathcal{J}$-polarised Hilbert-Blumenthal abelian scheme over the $\mathcal{O}$-scheme $S$ with level structure $\mu_N$ (for some $N \in \mathbb{N}$) is given by the quadruple $(A, \iota, \lambda, \varepsilon)$ with the following properties:
\begin{enumerate}
    \item $A$ is an abelian scheme $A \rightarrow S$ with relative dimension $d$;
    \item $\iota$ is a ring homomorphism acting on $\mathcal{O}$, $\iota: \mathcal{O} \rightarrow \text{End}_S(A)$;
    \item take $M_A$ to be the invertible $\mathcal{O}$-module of symmetric $\mathcal{O}$-linear homomorphisms from $A$ to its dual $A^\vee$, with positive cone $M_A^+$. Then $\lambda$ is a polarisation,
    \begin{equation*}
        \lambda: (M_A, M_A^+) \xrightarrow{\sim} (\mathcal{J}, \mathcal{J}^+);
    \end{equation*}
    \item $\epsilon$ is an injective $\mathcal{O}$-linear homomorphism,
    \begin{equation*}
        \varepsilon : \mu_N \underset{\mathbb{Z}}{\otimes} \mathfrak{d}^{-1} \rightarrow A,
    \end{equation*}
    where $\mu_N \underset{\mathbb{Z}}{\otimes} \mathfrak{d}^{-1}$ acts on any scheme $T$ over $S$ in the following way:
    \begin{equation*}
        (\mu_N \underset{\mathbb{Z}}{\otimes} \mathfrak{d}^{-1})(S) = \mu_N(S) \underset{\mathbb{Z}}{\otimes} \mathfrak{d}^{-1}.
    \end{equation*}
     
\end{enumerate}
We also have to satisfy the Deligne-Pappas condition:
\begin{itemize}
    \item[(DP)] the following is an isomorphism,
    \begin{equation*}
        A \underset{\mathcal{O}}{\otimes} M_A \xrightarrow{\sim} A^\vee.
    \end{equation*} 
\end{itemize}
We are now in a position to define a modular form in this geometric language. In order to do this, we need the following data, also see \cite[\S5]{andreattagoren}, \cite[\S5]{goren}:
\begin{itemize}
    \item an affine scheme $\text{Spec}(R)$ and a weight $\chi$ contained in the group of characters $\mathbb{X}_R$;
    \item a $\mathcal{J}$-polarised Hilbert-Blumenthal abelian scheme $(A,\iota, \lambda, \varepsilon)$ over $\text{Spec}(R)$ with level structure $\mu_N$;
    \item a generator $\omega$ of $\Omega^1_{A/R}$ as an $R \underset{\mathbb{Z}}{\otimes} \mathcal{O}$-module.
\end{itemize}

\begin{define}
    We say $f \in M(R, \mu_N, \chi)$ is a Hilbert modular form over a ring $R$, of weight $\chi$ and level structure $\mu_N$ if $f$ is a rule
    \begin{equation*}
        (A, \iota, \lambda, \varepsilon, \omega) \mapsto f(A, \iota, \lambda, \varepsilon, \omega) \in R
    \end{equation*}
    which only depends on the isomorphism class of the $\mathcal{J}$-polarised Hilbert-Blumenthal abelian scheme $(A, \iota, \lambda, \varepsilon, \omega)$ over $\text{Spec}(R)$ with level structure $\mu_N$, commutes with base change and satisfies:
    \begin{equation*}
        f(A, \iota, \lambda, \varepsilon, \alpha^{-1}\omega) = \chi(\alpha)f(A, \iota, \lambda, \varepsilon, \omega),
    \end{equation*}
    for all $\alpha \in (R \underset{\mathbb{Z}}{\otimes} \mathcal{O})^*$.
\end{define}

\subsubsection{The q-expansion principle}
Consider the following set up:
\begin{itemize}
    \item $ \mathcal{A}, \mathcal{B}$ are projective, rank 1 $\mathcal{O}$-modules;
    \item an $\mathcal{O}$-linear map $\underline{q}$;
    \item $M:=\mathcal{AB}$ is an $\mathcal{O}$ module and $\{\mathcal{AB}^{-1}, (\mathcal{AB}^{-1})^+\} \cong \{\mathcal{J}, \mathcal{J}^+ \}$.
\end{itemize}

We define a $\mathcal{J}$-polarised unramified cusp $(\mathcal{A}, \mathcal{B}, \varepsilon, j)$ of level $\mu_N$ over Spec$(R)$ by the following rules:
\begin{enumerate}
    \item $\mathcal{A}, \mathcal{B}$ are fractional ideals such that $\mathcal{J}=\mathcal{AB}^{-1}$,
    \item $\varepsilon: N^{-1}\mathcal{O}/\mathcal{O} \tilde{\rightarrow} N^{-1}\mathcal{A}^{-1}/ \mathcal{A}^{-1}$ is an $\mathcal{O}$-linear isomorphism,
    \item $j: \mathcal{A} \otimes_{\mathbb{Z}} R \tilde{\rightarrow} \mathcal{O} \otimes_{\mathbb{Z}} R$ is an $(\mathcal{O} \otimes_{\mathbb{Z}} R)$-linear isomorphism.
\end{enumerate}
Taking $f \in M(R, \mu_N, \chi)$ as above, we can then define a $q$-expansion of $f$ at the cusp $(\mathcal{A}, \mathcal{B}, \epsilon, j)$. We denote this by $f(\textbf{Tate}(\mathcal{A}, \mathcal{B}), \varepsilon, j)$. Then Theorem 6.8 of \cite{andreattagoren} says the $q$-expansion is of the form:
\begin{equation*}
    f(\textbf{Tate}(\mathcal{A}, \mathcal{B}), \varepsilon, j) = \sum_{\nu \in \mathcal{AB}^+\cup \{0\}} a_\nu q^\nu \in R[[q^\nu]]_{\nu \in \mathcal{AB}^+\cup \{0\}}.
\end{equation*}

Andreatta-Goren then gives a Hilbert analogue of the $q$-expansion principle (Theorem 6.10, \cite{andreattagoren}):
\begin{thm}
Consider unramified cusp $(\mathcal{A}, \mathcal{B}, \varepsilon, j)$ defined over a ring $R$. For $f \in M(R, \mu_N, \chi)$, we have the following:
\begin{enumerate}
    \item If $f(\textbf{Tate}(\mathcal{A}, \mathcal{B}), \varepsilon, j)=0$ then $f=0$,
    \item If $(\mathcal{A}, \mathcal{B}, \varepsilon, j)$ is defined over a subring $R^\prime$ of $R$ and $f(\textbf{Tate}(\mathcal{A}, \mathcal{B}, \varepsilon, j) \in R^\prime[[q^\nu]]_{\nu \in \mathcal{AB}^+\cup \{0\}}$, then $f \in M(R^\prime, \mu_N, \chi)$.
\end{enumerate}
\end{thm}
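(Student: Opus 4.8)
The plan is to view a geometric Hilbert modular form $f\in M(R,\mu_N,\chi)$ as a global section of the automorphic line bundle $\underline{\omega}^{\chi}$ on a toroidal compactification $\overline{\mathcal{M}}$ of the $\mathcal{J}$-polarised Hilbert--Blumenthal moduli space over $R$, and to recognise the $q$-expansion $f(\textbf{Tate}(\mathcal{A},\mathcal{B}),\varepsilon,j)$ as the pullback of this section along the map from the formal neighbourhood of the boundary stratum attached to the cusp $(\mathcal{A},\mathcal{B},\varepsilon,j)$ into $\overline{\mathcal{M}}$ determined by the Tate object. With both statements rephrased geometrically, each part reduces to a standard input.

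For part (1), the vanishing of $f(\textbf{Tate}(\mathcal{A},\mathcal{B}),\varepsilon,j)$ says exactly that the section $f$ restricts to zero on the formal completion of $\overline{\mathcal{M}}$ along the boundary component indexed by the chosen cusp. First I would invoke the geometric irreducibility of the $\mathcal{J}$-polarised moduli space, i.e.\ connectedness of its geometric fibres together with smoothness: every irreducible component then contains the relevant boundary stratum, so a section of a line bundle vanishing on a formal neighbourhood of a point in each component must vanish identically. This forces $f=0$.

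Part (2) is a descent of the ring of definition. Since $\overline{\mathcal{M}}$ and $\underline{\omega}^{\chi}$ already descend to the subring $R^{\prime}$ (they are base-changed from a model over $\mathcal{O}$), flatness of $R$ over $R^{\prime}$ and cohomology base change for the locally free sheaf $\underline{\omega}^{\chi}$ on the proper space $\overline{\mathcal{M}}$ yield a natural identification $H^{0}(\overline{\mathcal{M}}_{R},\underline{\omega}^{\chi})\cong H^{0}(\overline{\mathcal{M}}_{R^{\prime}},\underline{\omega}^{\chi})\otimes_{R^{\prime}}R$. The $q$-expansion map is compatible with this base change and injective by part (1), so a form whose $q$-expansion lies in $R^{\prime}[[q^{\nu}]]$ is forced into the $R^{\prime}$-lattice $H^{0}(\overline{\mathcal{M}}_{R^{\prime}},\underline{\omega}^{\chi})$; that is, $f\in M(R^{\prime},\mu_N,\chi)$.

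The hard part will be supplying the geometric irreducibility that drives part (1): one must know that the compactified $\mathcal{J}$-polarised Hilbert--Blumenthal variety is geometrically connected and that the fixed cusp meets every irreducible component, which for these varieties is a genuine theorem (in the spirit of Ribet and Deligne--Ribet) rather than a formality, and which needs care in positive residue characteristic. A secondary technical point is verifying the flatness and base-change hypotheses of part (2) along the toroidal boundary, and checking that the Tate object faithfully realises the formal neighbourhood of the cusp, so that the two descriptions of the $q$-expansion genuinely coincide.
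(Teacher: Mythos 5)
First, a point of comparison: the paper does not prove this statement at all --- it is quoted directly from Andreatta--Goren (Theorem 6.10 of \cite{andreattagoren}) --- so your proposal can only be measured against the standard proof given there (which follows Katz's argument for elliptic modular forms and Rapoport's in the Hilbert case). Your treatment of part (1) is essentially that argument: interpret the $q$-expansion as the restriction of the section to the formal neighbourhood of the cusp, then invoke geometric irreducibility of the $\mathcal{J}$-polarised moduli space with $\mu_N$-level structure, which you correctly identify as the deep input. One caveat: the step ``a section of a line bundle vanishing on a formal neighbourhood of a point of each irreducible component vanishes identically'' is only valid for reduced (indeed integral) schemes, whereas the theorem is over an arbitrary ring $R$, possibly non-reduced and non-noetherian. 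The standard repair argues fibrewise, using flatness of the compactified moduli scheme over $R$ and geometric integrality of its fibres, reducing to artinian local $R$ and inducting on the length. Repairable, but a real omission as written.

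The genuine gap is in part (2). You assume $R$ is flat over $R'$; the theorem makes no such hypothesis --- $R'$ is an \emph{arbitrary} subring --- and the strength of the $q$-expansion principle lies precisely in the absence of such conditions. (In this paper's own applications flatness happens to hold, e.g.\ for $\mathbb{F}_{\Lambda'}\subset\overline{\mathbb{F}}_l$ or for a domain inside $\mathbb{C}$, but a proof of the stated theorem cannot invoke it.) Moreover, even granting the flat base change isomorphism $M(R,\mu_N,\chi)\cong M(R',\mu_N,\chi)\otimes_{R'}R$, your conclusion does not follow: injectivity of the $q$-expansion map does not by itself force an element of $M(R',\mu_N,\chi)\otimes_{R'}R$ whose $q$-expansion lies in $R'[[q^\nu]]$ to lie in $M(R',\mu_N,\chi)\otimes 1$. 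The mechanism Andreatta--Goren (following Katz) actually use is flatness not of $R$ over $R'$ but of the \emph{cokernel} of the $q$-expansion map over the base ring $R_0$ of the moduli problem: writing $0\to M(R_0,\mu_N,\chi)\to R_0[[q^\nu]]\to C\to 0$, one deduces that $C$ is $R_0$-flat from part (1) applied after arbitrary base change (universal injectivity into a flat module kills $\text{Tor}_1(C,-)$, checked on the algebras $R_0/I$), and part (2) then follows from a diagram chase comparing this sequence tensored with $R'$ and with $R$, where flatness of $C$ gives injectivity of $C\otimes_{R_0}R'\to C\otimes_{R_0}R$. Replacing your part (2) with this cokernel-flatness argument would close the gap and align your proposal with the cited proof.
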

\subsubsection{Comparison to classical Hilbert modular forms}
To compare to the classical setting, we take the ring $R=\mathbb{C}$ and take $\mathcal{A}=\mathfrak{t}_\lambda$, $\mathcal{B}= \mathcal{O}$ so that $\mathcal{J}=M=\mathfrak{t}_\lambda$. Then $\mu_N$ corresponds to the level $N$ structure given by
\begin{equation*}
    \Gamma_N((\mathfrak{dt}_{\lambda})^{-1}) = \left\{ \left(\begin{smallmatrix} a & b \\ c & d \end{smallmatrix}\right) \mid a \in \mathcal{O}, d \in 1 + N\mathcal{O}, b \in (\mathfrak{dt}_{\lambda})^{-1}, c \in N\mathfrak{dt}_\lambda, ad-bc=1\right\}.
\end{equation*}
This is the Hilbert analogue of the group $\Gamma_1(N)$ in the elliptic setting. We see that, in the same way as in the elliptic case, $\Gamma_N((\mathfrak{dt}_{\lambda})^{-1}) \subset \Gamma_\lambda(N)$, which gives $M_{\textbf{k}}(\Gamma_\lambda(N)) \subset M_{\textbf{k}}(\Gamma_N((\mathfrak{dt}_{\lambda})^{-1})) \subset M_\textbf{k}(N)$. Also, note that for any $\mathfrak{n} \in \mathcal{O}$ such that $\mathfrak{n} \mid N$, we have $M_{\textbf{k}}(\Gamma_\lambda(\mathfrak{n})) \subset M_{\textbf{k}}(\Gamma_N((\mathfrak{dt}_{\lambda})^{-1}))$.

Classically, we can view $f \in M(\mathbb{C}, \mu_{N}, \chi)$ as in Section 2, i.e. it is a holomorphic function
\begin{align*}
    f: \mathcal{H}^d &\longrightarrow \mathbb{C} \\
     \tau=(z_1, \cdots, z_d) &\mapsto f(\tau)
\end{align*}
where the action of the modular group $\Gamma_N((\mathfrak{dt}_{\lambda})^{-1})$ is given by the automorphy factor 
\begin{equation*}
    j_\chi(\gamma, (z_1, \cdots, z_d)) = \prod_{i=1}^d (\sigma_i(c)z_i + \sigma_i(d))^k, \hspace{2mm} \text{for } \gamma = \left( \begin{smallmatrix} a & b \\ c & d \end{smallmatrix} \right) \in \Gamma_N((\mathfrak{dt}_{\lambda})^{-1}).
\end{equation*}
As in Section 2, we have that $f(\tau)=f(\tau + \alpha)$ for $\alpha \in (\mathfrak{dt}_\lambda)^{-1}$. This means we get a $q$-expansion at the cusp $(i\infty, \cdots, i\infty)$
\begin{equation*}
    f(\underline{q}) = a_0 + \sum_{\nu \in (\mathfrak{t}_\lambda)^+} a_\nu q^\nu, \hspace{2mm} q^\nu=\text{exp}^{2\pi i\text{Tr}_{F/\mathbb{Q}}(\nu \tau)}, 
\end{equation*}
and 
\begin{equation*}
    \text{Tr}_{L/\mathbb{Q}}(\nu \tau) = \sum_{i=1}^d \sigma_iz_i.
\end{equation*}
Then, by the definition of $j$, we have the following canonical isomorphism:
\begin{equation*}
    j_\text{can}: \mathfrak{t}_\lambda \otimes_{\mathbb{Z}} \mathbb{C} \tilde{\longrightarrow} \mathcal{O} \otimes_{\mathbb{Z}} \mathbb{C}.
\end{equation*}
With a bit of work, we can conclude that $f(\underline{q})=f(\textbf{Tate}(\mathcal{A}, \mathcal{B}), \varepsilon, j_\text{can})$.

\subsubsection{Partial Hasse invariants}
\label{sec:Hasse}
Following \cite{goren2} (see also \cite[\S 7,8]{andreattagoren}), let $\tilde{K}$ be a perfect field of characteristic $l$ which contains all residue fields $k_\mathfrak{l} = \mathcal{O}/\mathfrak{l}$. Then there exist modular forms $h_{\mathfrak{l},i}$ for all $\mathfrak{l} \mid l$ and $1 \leq i \leq f_{\mathfrak{l}}$ with
\begin{equation*}
    h_{\mathfrak{l},i} \in M(\tilde{K}, \chi_{\mathfrak{l},i-1}^l\chi_{\mathfrak{l},i}^{-1}),
\end{equation*}
whose $q$-expansion at any $\mathcal{J}$-polarised cusp defined over $\mathbb{F}_l$ is $1$. The set of modular forms $\{h_{\mathfrak{l},i}\}_{\mathfrak{l},i}$ are called \emph{partial Hasse invariants}. Then,we have the following proposition from \cite[Prop. 8.19]{andreattagoren}
\begin{prop}
    For $f \in M(\tilde{K}, \mu_N, \chi)$ a $\mathcal{J}$-polarised modular form, there is a unique $\mathcal{J}$-polarised modular form $g$ over $\tilde{K}$ having the same $q$-expansion as $f$ at any cusp and such that if $g^\prime$ is a $\mathcal{J}$-polarised modular form over $\tilde{K}$ with the same $q$-expansion as $f$, then there exist non-negative integers $a_{\mathfrak{l},i}$ for all $\mathfrak{l} \mid l$ in $\mathcal{O}$ and $1\leq i \leq f_{\mathfrak{l}}$ such that
    \begin{equation*}
        g^\prime = g \prod_{\mathfrak{l},i} h_{\mathfrak{l},i}^{a_{\mathfrak{l},i}}.
    \end{equation*}
\end{prop}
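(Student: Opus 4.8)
The plan is to prove this by analysing the fibres of the $q$-expansion map on the full graded ring of mod $l$ Hilbert modular forms of all weights, $R = \bigoplus_{\chi} M(\tilde{K}, \mu_N, \chi)$. The partial Hasse invariants $h_{\mathfrak{l},i}$ are the only source of ambiguity: each has $q$-expansion identically $1$ but nontrivial weight $v_{\mathfrak{l},i} = \chi_{\mathfrak{l},i-1}^l\chi_{\mathfrak{l},i}^{-1}$, so multiplying a form by any $h_{\mathfrak{l},i}$ leaves its $q$-expansion unchanged while altering its weight. The entire statement then amounts to showing that these are the \emph{only} relations, i.e.\ that the set of weights realising a fixed $q$-expansion $\phi$ is exactly $\chi_0 + \mathbb{N}\{v_{\mathfrak{l},i}\}$ for a unique base weight $\chi_0$, with $g$ being the form of weight $\chi_0$.

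First I would fix a $\mathcal{J}$-polarised cusp and invoke the $q$-expansion principle quoted above, which gives injectivity in a single fixed weight; hence the content is purely about which weights can carry the $q$-expansion of $f$. The pivotal ingredient is a divisibility statement: if $g'$ realises the $q$-expansion of $f$ and its weight is not minimal, then $g'$ is divisible in $R$ by some $h_{\mathfrak{l},i}$, with quotient again a modular form realising the same $q$-expansion. Granting this, the proof is a well-founded induction on the total degree $\deg(\chi)$ obtained from the homomorphism $\mathbb{X}_{\tilde{K}} \to \mathbb{Z}$ sending each $\chi_{\mathfrak{l},i}$ to $1$: since $\deg(v_{\mathfrak{l},i}) = l-1 > 0$, each division by a Hasse invariant strictly decreases $\deg$, which is bounded below on nonzero forms, so after finitely many steps one reaches a form $g$ of minimal degree. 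Reassembling the stripped factors yields $g' = g\prod_{\mathfrak{l},i} h_{\mathfrak{l},i}^{a_{\mathfrak{l},i}}$ with $a_{\mathfrak{l},i}\ge 0$. Uniqueness of $g$ follows because two minimal forms realising $\phi$ share the same weight $\chi_0$ and hence coincide by the $q$-expansion principle, while uniqueness of the exponents $a_{\mathfrak{l},i}$ follows from the linear independence of the Hasse weights: within each block above $\mathfrak{l}$ the map $e_i \mapsto v_{\mathfrak{l},i} = l\,e_{i-1} - e_i$ is a circulant with determinant $\pm(l^{f_{\mathfrak{l}}}-1)\neq 0$, so the monoid $\mathbb{N}\{v_{\mathfrak{l},i}\}$ is free.

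The crux, and the step I expect to be the main obstacle, is the divisibility statement. Geometrically, forms of weight $\chi$ are sections of the line bundle $\omega^{\chi}$ on a toroidal compactification of the special fibre of the Hilbert--Blumenthal moduli stack, and $h_{\mathfrak{l},i}$ is a section cutting out the partial non-ordinary divisor $W_{\mathfrak{l},i}$. The argument requires the two geometric inputs of Goren and Andreatta--Goren: that the special fibre is normal, and that each $W_{\mathfrak{l},i}$ is a reduced, irreducible Cartier divisor whose ideal sheaf is generated by $h_{\mathfrak{l},i}$. Given these, a section vanishing along $W_{\mathfrak{l},i}$ is automatically divisible by $h_{\mathfrak{l},i}$, and one checks that a non-minimal weight forces precisely such vanishing by comparing the order of vanishing of $g'$ along the $W_{\mathfrak{l},i}$ with the constraints imposed by its weight. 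Carrying out this vanishing-order bookkeeping at the cusps, using the structure of $\mathbb{X}_{\tilde{K}}$ as the free abelian group on the $\chi_{\mathfrak{l},i}$ together with the Frobenius relations encoded in the $v_{\mathfrak{l},i}$, is the delicate part; everything else is formal.
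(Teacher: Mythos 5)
First, a point of comparison: the paper does not prove this proposition at all --- it is quoted directly from Andreatta--Goren \cite{andreattagoren} (their Prop.\ 8.19), so your attempt is really being measured against their argument. The formal skeleton of your proposal matches the easy half of that argument: the $q$-expansion principle gives injectivity within a fixed weight; a section vanishing along $W_{\mathfrak{l},i}$ is divisible by $h_{\mathfrak{l},i}$ (using normality of the special fibre and the fact that $W_{\mathfrak{l},i}$ is the reduced zero divisor of $h_{\mathfrak{l},i}$); and the stripping process terminates. (Even here, your termination criterion --- ``$\deg$ is bounded below on nonzero forms'' --- is not a formal fact; it is cleaner to terminate by noting that each division by $h_{\mathfrak{l},i}$ drops the finite order of vanishing of the form along $W_{\mathfrak{l},i}$ by one. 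Likewise, irreducibility of each $W_{\mathfrak{l},i}$ is neither known in general nor needed for divisibility.)

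The genuine gap is the sentence ``two minimal forms realising $\phi$ share the same weight $\chi_0$ and hence coincide by the $q$-expansion principle.'' Your induction shows that every form with $q$-expansion $\phi$ lies above \emph{some} Hasse-primitive form, but nothing you have set up prevents two Hasse-primitive forms $g_1, g_2$ of \emph{different} weights $\chi_1 \neq \chi_2$ from having the same $q$-expansion: a priori $\chi_1 - \chi_2$ could be an element of the lattice spanned by the $v_{\mathfrak{l},i}$ that is neither a non-negative nor a non-positive combination (incomparable weights), or could even lie outside that lattice. What is actually needed is the statement that the $q$-expansion determines the weight modulo the lattice generated by the $v_{\mathfrak{l},i}$. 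This is the Hilbert analogue of Katz's theorem that two nonzero mod $p$ elliptic modular forms with equal $q$-expansions have $k_1 \equiv k_2 \bmod (p-1)$, and that theorem is not formal: its proof requires the irreducibility of the Igusa curve, and in the Hilbert--Blumenthal setting the corresponding connectedness/irreducibility of Igusa-type covers of the ordinary locus is precisely the geometric core of Andreatta--Goren's Section 8. Once that input is granted, your argument does close: choose $b_{\mathfrak{l},i}, c_{\mathfrak{l},i} \geq 0$ with $\chi_1 \prod (\chi_{\mathfrak{l},i-1}^l\chi_{\mathfrak{l},i}^{-1})^{b_{\mathfrak{l},i}} = \chi_2 \prod (\chi_{\mathfrak{l},i-1}^l\chi_{\mathfrak{l},i}^{-1})^{c_{\mathfrak{l},i}}$, deduce $g_1 \prod h_{\mathfrak{l},i}^{b_{\mathfrak{l},i}} = g_2 \prod h_{\mathfrak{l},i}^{c_{\mathfrak{l},i}}$ from the $q$-expansion principle, and compare orders of vanishing along each $W_{\mathfrak{l},i}$ (zero for $g_1, g_2$ by Hasse-primitivity) to cancel and conclude $g_1 = g_2$. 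Without it, the ``vanishing-order bookkeeping'' you defer to cannot start, because the claim that a non-minimal weight forces vanishing along some $W_{\mathfrak{l},i}$ is equivalent to the uniqueness you are trying to establish.
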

We call such a $g$ in the above proposition a \emph{minimal weight} modular form. As in \cite[\S 8]{andreattagoren}, we can then define the filtration of $f \in M(\tilde{K}, \mu_N, \chi)$, denoted $\Phi(f)$, to be the weight of the unique minimal weight form $g$. We have that
\begin{equation*}
    \chi = \Phi(f) \prod_{\mathfrak{l},i} (\chi_{\mathfrak{l},{i-1}}^l\chi_{\mathfrak{l},i}^{-1})^{a_{\mathfrak{l},i}},
\end{equation*}
for some non-negative integers $a_{\mathfrak{l},i}$. In our case, we will take $\tilde{K}$ to be $\bar{\mathbb{F}}_l$. 
\subsubsection{$\theta$ map}
Take $K_l$ to be a field of characteristic $l$ such that $K_l$ is a subring of $R$ and $f \in M(R, \mu_{N}, \chi)$. Assume an unramified cusp of $f$ and the coefficients at the corresponding $q$-expansion of $f$ lie in $K_l$, then $f \in M(K_l, \mu_N, \chi)$ by the $q$-expansion principle. Now, following \cite[\S 15]{andreattagoren}, we have a map $\theta$ which acts as follows.

Write $l\mathcal{O}=\prod_{\mathfrak{l}\mid l} \mathfrak{l}^{e_{\mathfrak{l}}}$ with $e_\mathfrak{l} \geq 1$. Furthermore, we will assume that $e_\mathfrak{l}=1$ for all $\mathfrak{l} \mid l$ in $\mathcal{O}$. As before, let the $q$-expansion of $f$ at a $\mathcal{J}$ polarised unramified cusp be:
\begin{equation*}
    f(\textbf{Tate}(\mathcal{A}, \mathcal{B}), \varepsilon, j) = \sum_{\nu \in \mathcal{AB}^+\cup \{0\}} a_\nu q^\nu.
\end{equation*}
Then the partial theta operators, denoted $\theta_{\mathfrak{l},i}$, have the following effect on the $q$-expansion of $f$ at the same cusp
\begin{equation*}
    \theta_{\mathfrak{l},i}(f)(\textbf{Tate}(\mathcal{A}, \mathcal{B}), \varepsilon, j) = \sum_{\nu \in \mathcal{AB}^+} \tilde{\chi}_{\mathfrak{l},i}(\nu) a_\nu q^\nu.
\end{equation*}

We omit the formal definition of $\tilde{\chi}_{\mathfrak{l},i}$, for details, see \cite[\S 15]{andreattagoren}. We will simply use the following fact, 
\begin{equation*}
     \tilde{\chi}(\nu)=\prod_{i=1}^{f_{\mathfrak{l}}} \tilde{\chi}_{\mathfrak{l},i}(\nu) =  0 \ \text{ if } \nu \in \mathfrak{l}\mathcal{AB}.
\end{equation*}
Since the partial theta operators commute, we have a well defined map $\theta$ given by the composition of the partial theta operators:
\begin{equation} \label{eq:theta}
    \theta(f) = \circ_{\mathfrak{l} \mid l} \left(\prod_{i=1}^{f_\mathfrak{l}}  (\theta_{\mathfrak{l},i})\right)(f).
\end{equation}

This has the following effect on the $q$-expansion of $f$,
\begin{equation*}
    \theta(f)(\textbf{Tate}(\mathcal{A}, \mathcal{B}), \varepsilon, j) = \sum_{\nu \notin \mathfrak{l}\mathcal{AB} \ \forall \ \mathfrak{l} \mid l} \tilde{\chi}(\nu) a_\nu q^\nu.
\end{equation*}

Take $f \in M(K_l, \mu_N, \chi)$ of parallel weight $k$, i.e. $\chi = \prod_{\mathfrak{l}} \prod_{i=1}^{f_\mathfrak{l}} \chi_{\mathfrak{l},i}^k$ such that $k < l - 1$, then $\theta_{\mathfrak{l},i}(f) \in M(K_l, \mu_N, \chi\chi_{\mathfrak{l},{i-1}}^l\chi_{\mathfrak{l}, i})$. Such an $f$ is of minimal weight, using the theory of partial Hasse invariants introduced in Section \ref{sec:Hasse}. Furthermore, for some modular form $g$ of weight $\prod_{\mathfrak{l},i} \chi_{\mathfrak{l},i}^{a_{\mathfrak{l},i}}$ with $l \nmid a_{\mathfrak{l},i}$,  the filtration of $\theta_{\mathfrak{l},i}(g)$ is given by
\begin{equation*}
    \Phi(\theta_{\mathfrak{l},i}(g))=\chi_{\mathfrak{l},i-1}^l\chi_{\mathfrak{l},i} \Phi(g).
\end{equation*}
For $f$ as above, $\theta_{\mathfrak{l},i}(f)$ is of minimal weight since $l \nmid k$ and the composition $\theta_{\mathfrak{l},i-1}\circ\theta_{\mathfrak{l},i}(f)$ is of minimal weight since $l \nmid k+l$. The minimality of $\theta(f)$ follows, hence $\theta$ is an injective map on modular forms $f \in M(K_l, \mu_N, \chi)$.

\subsection{Adelic mod $l$ Hilbert modular forms}
\label{sec:adelicmodl}
In order to lift mod $l$ forms back to characteristic zero, we need a result of Lan-Suh \cite{lansuh} which requires a slightly different setup to that used in Section \ref{sec:classicmodl}. We briefly introduce the necessary adelic background now, but refer the reader to \cite{diamondsasaki} for more details. 

Taking the $\mathcal{J}$-polarised Hilbert-Blumenthal abelian scheme introduced in Section \ref{sec:classicmodl}, we now consider an adelic version of the mod $l$ Hilbert modular forms. Let $\Sigma$ denote the set of embeddings $F \hookrightarrow \overline{\mathbb{Q}}$. Let $L$ be a finite extension of $\mathbb{Q}_l$ containing all the embeddings of $\tau: F \hookrightarrow \overline{\mathbb{Q}}$ and write $\mathcal{O}_L$ for its ring of integers. Take $N \geq 3$, then let $\mathcal{M}_{\mathcal{J},N}$ be the functor that sends an $\mathcal{O}_L$-scheme $S$ to the set of isomorphism classes of Hilbert-Blumenthal abelian schemes with datum $(A, \iota, \lambda, \varepsilon)$ and level structure $\mu_N$. From \cite[Thm. 2.2]{delignepappas}, we have that $\mathcal{M}_{\mathcal{J},N}$ is representable by a smooth $\mathcal{O}_L$-scheme, denoted $Y_{\mathcal{J},N}$ and, using also \cite[Thm. 1.4]{chai}, $Y_{\mathcal{J},N}$ is quasiprojective over $\mathcal{O}_L$.

\subsubsection{Action on level structures}
An element $\nu$ in the group $\mathcal{O}_+^{\times}$ of totally positive units in $\mathcal{O}$ acts on $Y_{\mathcal{J},N}$ by sending $(A, \iota, \lambda, \varepsilon) \in Y_{\mathcal{J},N}(S)$ to $(A,\iota, \nu\lambda, \varepsilon) \in Y_{\mathcal{J},N}(S)$ for every $\mathcal{O}_L$-scheme $S$. An element $u \in GL_2(\mathcal{O}/N\mathcal{O})$ acts by sending $(A, \iota, \lambda, \varepsilon)$ to $(A, \iota, \lambda, \varepsilon\circ\Gamma_{u^{-1}})$ where $\Gamma_{u^{-1}}$ denotes right multiplication by $u^{-1}$. This defines a right action of $GL_2(\hat{\mathcal{O}})$ on $Y_{\mathcal{J},N}$ through projection $GL_2(\hat{\mathcal{O}}) \rightarrow GL_2(\mathcal{O}/N\mathcal{O})$. For $\mu \in \mathcal{O}^\times$, the action of $\mu^2 \in \mathcal{O}^\times_+$ on $Y_{\mathcal{J},N}$ is the same as that of $\mu^{-1}I_2 \in GL_2(\hat{\mathcal{O}})$, where $I_2$ is the $2 \times 2$ identity matrix.

Let $U$ be an open compact subgroup of $\text{Res}_{F/ \mathbb{Q}}GL_2(\hat{\mathbb{Z}})\cong GL_2(\hat{\mathcal{O}})$, such that $GL_2(\mathcal{O} \otimes \mathbb{Z}_l) \not\subset U$. Take an integer $N \geq 3$ such that $N$ is not divisible by $l$ and $U(N) \subset U$ where 
\begin{equation*}
    U(N):= \text{ker}(GL_2(\hat{\mathcal{O}}) \rightarrow GL_2(\mathcal{O}/N\mathcal{O})).
\end{equation*}
Then the action of $(\nu,u) \in \mathcal{O}_+^\times \times GL_2(\hat{\mathcal{O}})$ on $Y_{\mathcal{J},N}$ induces an action of 
\begin{equation*}
    G_{U,N}:= (\mathcal{O}_+^\times \times U)  / \{(\mu^2, u) \mid \mu \in \mathcal{O}^\times, u \in U, u \equiv \mu I_2 \ \text{mod } N\}
\end{equation*}
on $Y_{\mathcal{J},N}$. We define the following open compact subgroup of $GL_2(\hat{\mathcal{O}})$, for $\mathfrak{n} \mid N$ in $\mathcal{O}$:
\begin{equation*}
    U_1(\mathfrak{n}) = \left\{ \left( \begin{smallmatrix}
        a & b \\ c & d
    \end{smallmatrix} \right) \in GL_2(\hat{\mathcal{O}}) \mid c, d-1 \in \mathfrak{n} \hat{\mathcal{O}} \right\}.
\end{equation*}
This is the adelic Hilbert analogue of $\Gamma_1(N)$. From now on, we take $U = U_1(\mathfrak{n})$.

For $U = U_1(\mathfrak{n})$ ``small enough'' we have that $G_{U,N}$ acts freely on $Y_{J,N}$, as explained by Lemma $2.4.1$ of \cite{diamondsasaki}. More precisely, the following definition suffices.

\begin{define}\label{smallenough}
Let $\mathcal{P}_F$ denote the finite set of primes $r$ such that $\mathbb{Q}(\zeta_r+\zeta_r^{-1})\subseteq F$, and let $\mathcal{C}_F$ denote the finite set of quadratic CM-extensions $K/F$ such that either $K = F(\zeta_r)$ for some odd prime $r\in\mathcal{P}_F$ or $K = F(\sqrt{\beta})$ for some $\beta\in \mathcal{O}^{\times}$. We say that the level structure $U_1(\mathfrak{n})$ is small enough if for any prime $r\in \mathcal{P}_F$, unit $\alpha\in\mathcal{O}_K^{\times}\backslash\mathcal{O}^{\times}$ and $K\in \mathcal{C}_F$ we have that \[1-(1+\zeta_r)\alpha+\zeta_r\alpha^2\notin\mathfrak{n}.\]

\noindent (The sufficiency of this condition follows from the proof of Lemma $2.4.1$ of \cite{diamondsasaki}, since the characteristic polynomial of any element of $U_1(\mathfrak{n})$ has $1$ as a root mod $\mathfrak{n}$).
\end{define}

Fix a set $T$ of representatives $t$ in $(\mathbb{A}_F^\infty)^\times$ for the strict ideal class group

\[
(\mathbb{A}_F^\infty)^\times/F_+^\times\hat{\mathcal{O}}^\times \cong \mathbb{A}_F^\times / F^\times \hat{\mathcal{O}}^{\times} F_{\infty, +}^\times
\]
and let $\mathcal{J}_t$ denote the corresponding fractional ideal of $F$. Choose representatives $t$ such that the corresponding $\mathcal{J}_t$ are prime to $l$.

Since $Y_{\mathcal{J}_t,N}$ is quasiprojective over $\mathcal{O}_L$, the quotient $Y_{\mathcal{J}_t,N}/G_{U,N}$ is representable by a scheme over $\mathcal{O}_L$ and we define the Hilbert modular variety
\begin{equation*}
    Y_U := \bigsqcup_{t\in T} Y_{\mathcal{J}_t,N}/G_{U,N}.
\end{equation*}
Then $Y_U$ is smooth over $\mathcal{O}_L$, defined over $\mathcal{O}_L \cap \bar{\mathbb{Q}}$ and is independent of choices of $N$ and representatives of narrow ideal class group.

\subsubsection{Automorphic line bundles} Since $l$ is unramified in $F$, by \cite[Cor. 2.9]{delignepappas}, the Deligne-Pappas condition, $A \otimes_{\mathcal{O}} \mathcal{J} \tilde{\rightarrow} A^\vee$, is equivalent to the Rappoport condition, $e^* \Omega^1_{A/S} \simeq s_*\Omega^1_{A/S} $ being locally free of rank one over $\mathcal{O} \otimes \mathcal{O}_S$, where $s: A \rightarrow S$ is the structure morphism and $e: S \rightarrow A$ is the identity section. Since $\mathcal{O} \otimes \mathcal{O}_S \simeq \oplus_{\tau \in \Sigma} \mathcal{O}_S$, taking the universal Hilbert-Blumenthal abelian scheme $A_{\mathcal{J},N}$ over $Y_{\mathcal{J},N}$, we have the following decomposition:
\begin{equation*}
    s_* \Omega^1_{A_{\mathcal{J},N}/ Y_{\mathcal{J},N}} = \oplus_{\tau \in \Sigma} \ \omega_{\tau}.
\end{equation*}
Here, each $\omega_\tau$ is a line bundle on $S$. For a tuple $k = (k_\tau)_{\tau \in \Sigma}$, let $\omega^{\otimes k}:= \otimes_{\tau\in \Sigma} \omega_\tau^{\otimes k_{\tau}}$ on $Y_{\mathcal{J},N}$. The 

We also need a line bundle $\delta^{\otimes j}$ on $Y_{\mathcal{J},N}$ so that we can descend the line bundle $\omega^{\otimes k} \otimes \delta^{\otimes j}$, which we denote $\mathcal{L}_{\mathcal{J},N}^{k,j}$, to $Y_U$. By \cite{diamondsasaki}, $\wedge^2_{\mathcal{O}\otimes \mathcal{O}_S} H^1_{DR}(A/S)$ is locally free of rank one over $\mathcal{O} \otimes \mathcal{O}_S$ and it decomposes as $\oplus_{\tau \in \Sigma} \delta_{\tau}$, where the $\delta_\tau$'s denote the line bundles over the universal Hilbert-Blumenthal abelian scheme over $S= Y_{\mathcal{J},N}$. For a tuple $j = (j_\tau)_{\tau \in \Sigma} \in \mathbb{Z}^\Sigma$, we take $\delta^{\otimes j} = \otimes_{\tau \in \Sigma} \delta_\tau^{\otimes j_\tau}$. Then, for any $\mathcal{O}_L$-algebra $R$ in which the image of $\mu^{k+2j}$ is trivial in $R$ for all $\mu \in \mathcal{O}^\times \cap U$, we get a line bundle $\mathcal{L}_{U,R}^{k,j}$ on $Y_{U,R} = Y_U \times_{\mathcal{O}} R$ by descent from the pull-back of the line bundle $\mathcal{L}_{\mathcal{J},N}^{k,j}$. In particular, recall that if $\mu \in \mathcal{O}^\times$, then the action of $\mu^2 \in \mathcal{O}^\times$ is the same as that of $\mu^{-1}I_2 \in GL_2(\hat{\mathcal{O}})$ on $Y_{\mathcal{J},N}$. Hence, $(\mu^2, \mu I_2)$ acts trivially on $Y_{\mathcal{J},N}$. This induces an action on $\mathcal{L}_{\mathcal{J},N}^{k,j}$ given by multiplication by $\mu^{k+2j}=\prod_{\tau \in \Sigma} \tau(\mu)^{k_\tau + 2j_\tau}$. In the case where $k+2j=(\tilde{k}, \cdots, \tilde{k})$ is parallel, i.e $k_\tau+2j_\tau$ doesn't depend on $\tau$, $\mu^{k+2j} = N(\mu)^{\tilde{k}}$. If $\tilde{k}$ is even, or if $N(\mu)=1$ for all $\mu \in \mathcal{O}^\times \cap U$ then this action  is trivial on $\mathcal{L}_{\mathcal{J},N}^{k,j}$ and hence we can descend. 

Let $Y_{U,R}^\text{tor}$ be the toroidal compactification of $Y_{U,R}$ and define the line bundle $\mathcal{L}_{U,R}^{\text{tor}, k,j}$ on $Y_{U,R}^\text{tor}$ in an analogous way to the line bundle $\mathcal{L}_{U,R}^{k,j} $ on $Y_{U,R}$. Then we define the following.

\begin{define}
    Let $R$ be an $\mathcal{O}_L$-algebra such that the image of $\mu^{k +2j}$ in $R$ is trivial for all $\mu \in \mathcal{O}^\times \cap U$. Then we define the space of Hilbert modular forms over $R$ of weight $(k,j)$ and level $U$ to be 
    \begin{equation*}
        M_{k,j}(U;R) := H^0(Y_{U,R}^\text{tor}, \mathcal{L}_{U,R}^{\text{tor},k,j}).
    \end{equation*}

    Taking $D$ to be the Cartier divisor of the cusps, equivalently we can say $D$ is the complement of $Y^\text{tor}_{U} - Y_{U}$, we can define the space of cusp forms of weight $(k,j)$ and level $U$ to be
    \begin{equation*}
        S_{k,j}(U;R) := H^0(Y_{U,R}^\text{tor}, \mathcal{L}_{U,R}^{\text{tor},k,j}(-D)).
    \end{equation*}

    Furthermore, for a uniformiser $\pi$ in $L$, $R= \mathcal{O}_L/\pi$ a residue field, then we call an element $f \in M_{k,j}(U;R)$ (resp. $S_{k,j}(U;R)$) a mod $l$ Hilbert modular form (resp. cusp form).
\end{define}
With this definition, Lan-Suh \cite[Thm. 4.1]{lansuh} implies the following:
\begin{thm}
\label{thm:lift}
    Let $k$, $j$ be parallel, $k \geq 3$. Let $R$ be the ring of integers of some finite extension $K$ of $\mathbb{F}_l$ and let $\kappa$ be the residue field of $R$. Then a mod $l$ Hilbert cuspform $f \in S_{k,j}(U;\kappa)$ is liftable to a characteristic zero cuspform $g \in S_{k,j}(U;R)$.
\end{thm}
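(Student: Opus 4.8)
The plan is to deduce liftability from a single coherent-cohomology vanishing statement on the characteristic-$l$ fibre, for which I would invoke the vanishing theorem of Lan--Suh \cite{lansuh}. Throughout, let $\pi$ be a uniformiser of $R$, let $\kappa = R/\pi$, and abbreviate $\mathcal{L} = \mathcal{L}_{U,R}^{\text{tor},k,j}(-D)$, the subcanonical (cuspidal) automorphic line bundle on the toroidal compactification $Y_{U,R}^{\text{tor}}$, which is proper over $\text{Spec}(R)$ and flat over $R$ (indeed $Y_{U,R}^{\text{tor}}$ is smooth over $R$ and $\mathcal{L}$ is invertible).

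First I would reduce the theorem to surjectivity of the reduction map $S_{k,j}(U;R)\to S_{k,j}(U;\kappa)$. Multiplication by $\pi$ yields a short exact sequence of sheaves
\begin{equation*}
0 \longrightarrow \mathcal{L} \xrightarrow{\ \pi\ } \mathcal{L} \longrightarrow \mathcal{L}\otimes_R \kappa \longrightarrow 0,
\end{equation*}
where $\mathcal{L}\otimes_R\kappa = \mathcal{L}_{U,\kappa}^{\text{tor},k,j}(-D)$ is the corresponding sheaf on the special fibre $Y_{U,\kappa}^{\text{tor}}$. The associated long exact cohomology sequence reads
\begin{equation*}
H^0(Y_{U,R}^{\text{tor}},\mathcal{L}) \xrightarrow{\ \pi\ } H^0(Y_{U,R}^{\text{tor}},\mathcal{L}) \longrightarrow H^0(Y_{U,\kappa}^{\text{tor}},\mathcal{L}\otimes\kappa) \longrightarrow H^1(Y_{U,R}^{\text{tor}},\mathcal{L}),
\end{equation*}
and since $S_{k,j}(U;\kappa) = H^0(Y_{U,\kappa}^{\text{tor}},\mathcal{L}\otimes\kappa)$, the cokernel of reduction is identified with the $\pi$-torsion of $H^1(Y_{U,R}^{\text{tor}},\mathcal{L})$. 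Thus liftability of every mod $l$ cusp form is equivalent to this $H^1$ being $\pi$-torsion free.

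Next I would control $H^1$ by cohomology and base change. Because $f\colon Y_{U,R}^{\text{tor}}\to \text{Spec}(R)$ is proper and $\mathcal{L}$ is $R$-flat, Grothendieck's base-change theorem applies: if $H^1(Y_{U,\kappa}^{\text{tor}},\mathcal{L}\otimes\kappa)=0$, then $R^1 f_*\mathcal{L}$ vanishes in a neighbourhood of the closed point and the formation of $f_*\mathcal{L}$ commutes with base change there. In particular $H^1(Y_{U,R}^{\text{tor}},\mathcal{L})$ has no $\pi$-torsion, and the reduction map $S_{k,j}(U;R)\to S_{k,j}(U;\kappa)$ is surjective (in fact an isomorphism after $\otimes\kappa$). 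The entire theorem is therefore reduced to the single vanishing statement
\begin{equation*}
H^1\!\left(Y_{U,\kappa}^{\text{tor}},\, \mathcal{L}_{U,\kappa}^{\text{tor},k,j}(-D)\right)=0
\end{equation*}
for the subcanonical extension on the characteristic-$l$ fibre.

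This final vanishing is precisely a special case of \cite[Thm. 4.1]{lansuh}, applied to the Hilbert--Blumenthal setting of Section \ref{sec:adelicmodl}. I would verify that our data meet their hypotheses: $l$ is unramified in $F$ and the level $U = U_1(\mathfrak{n})$ is small enough, so that $Y_U$ is a fine moduli scheme with smooth toroidal compactification over $\mathcal{O}_L$ and $\mathcal{L}_{U,R}^{\text{tor},k,j}$ descends (recall this requires $\mu^{k+2j}$ to be trivial for $\mu\in\mathcal{O}^\times\cap U$, which holds in the parallel case when $\tilde k$ is even or $N(\mu)=1$), while the condition $k\geq 3$ places the parallel weight strictly inside the positivity cone governing the Lan--Suh vanishing range in all positive cohomological degrees. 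The real content is thus not new geometry but this translation: matching the concrete Hilbert modular varieties, automorphic line bundles and cusp divisor of our setup to the general PEL-type framework of \cite{lansuh}, and confirming that $k\geq 3$ is exactly the regularity needed for $H^{>0}$ of the subcanonical extension to vanish in characteristic $l$. That bookkeeping, rather than any cohomological difficulty, is the main obstacle.
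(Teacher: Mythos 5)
Your cohomological skeleton is correct: the exact sequence $0 \to \mathcal{L} \xrightarrow{\pi} \mathcal{L} \to \mathcal{L}\otimes_R\kappa \to 0$, the identification of the cokernel of the reduction map $S_{k,j}(U;R)\to S_{k,j}(U;\kappa)$ with the $\pi$-torsion of $H^1(Y^{\text{tor}}_{U,R},\mathcal{L})$, and the deduction of surjectivity from the vanishing of $H^1$ on the special fibre are all standard and sound. This is in fact the mechanism by which a characteristic-$l$ vanishing theorem yields liftability, and it is internal to \cite{lansuh}; the paper itself does not reproduce it, but simply quotes \cite[Thm. 4.1]{lansuh} wholesale. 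So up to this point you and the paper are resting on the same source, with you unfolding one layer of its proof.

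The genuine gap is in the step you dismiss as bookkeeping. You assert that ``$Y_U$ is a fine moduli scheme'' meeting the Lan--Suh hypotheses, so that their theorem applies to $Y^{\text{tor}}_{U,\kappa}$ directly. This is false: $\text{Res}_{F/\mathbb{Q}}\text{GL}_2$ is not a PEL group, and in the construction of Section \ref{sec:adelicmodl} the variety $Y_U = \bigsqcup_{t\in T} Y_{\mathcal{J}_t,N}/G_{U,N}$ is a quotient, by a freely acting finite group, of a finite disjoint union of the PEL-type fine moduli schemes $Y_{\mathcal{J}_t,N}$ (each of which is itself only a finite disjoint union of the models Lan--Suh actually work with). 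Lan--Suh's vanishing/liftability applies to those pieces, not to $Y_U$, and transferring the conclusion across the quotient is not formal: writing $q\colon Y' \to Y'/G$ for the quotient, cusp forms on the quotient are the $G$-invariants $H^0(Y'_\kappa, q^*\mathcal{L})^G$, and the obstruction to lifting an invariant mod $l$ form to an \emph{invariant} characteristic-zero form (equivalently, to descending the $H^1$-vanishing through the quotient) lies in group cohomology $H^1(G,-)$ with mod $l$ coefficients, which need not vanish when $l$ divides the order of the acting group --- the naive averaging argument fails precisely then. One must also check compatibility of the group action with the toroidal compactifications and the cuspidal subcanonical bundles. This transfer is exactly what the paper outsources to \cite[\S 2.1.3]{diamondkassaeisasaki} and \cite[\S 2.4]{diamond}; your proposal neither supplies that argument nor cites a source for it, and the fine-moduli claim that replaces it is incorrect. (A smaller point in the same direction: the fibre-wise vanishing $H^1\bigl(Y^{\text{tor}}_{U,\kappa},\mathcal{L}(-D)\bigr)=0$ that you attribute to \cite[Thm. 4.1]{lansuh} is, as the paper uses that reference, their liftability statement rather than a vanishing statement for your quotient variety; the vanishing is an ingredient of their proof and is established there only for their PEL models.)
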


Note that Lan-Suh's result is for Shimura varieties of PEL type, but one can view a Hilbert-Blumenthal abelian scheme as a finite disjoint union of the models used in Lan-Suh \cite{lansuh}. Since the Hilbert modular variety is a finite quotient of a finite disjoint union of Hilbert-Blumenthal abelian schemes \cite[\S 2.1.3]{diamondkassaeisasaki}, we can apply Lan-Suh's result to our setting. See the discussion in \cite[\S 2.4]{diamond} for more details.

\section{Local origin congruences}
We are now in a position to prove a result concerning the existence of Eisenstein congruences for Hilbert eigenforms. Recall that $\eta,\psi$ are narrow ray class characters of $F$ with coprime moduli $\mathfrak{a},\mathfrak{b}$ and signatures $\textbf{q},\textbf{r}\in\{0,1\}^d$ respectively. It is assumed that $\phi = \eta\psi$ (considered as a ray class character modulo $\mathfrak{m} = \mathfrak{a}\mathfrak{b}$) has conductor $\mathfrak{m}$ and that $\textbf{q}+\textbf{r} \equiv \textbf{k} \mod (2\mathbb{Z})^d$. 

\begin{thm}
\label{thm:existence}
Let $k>2$, $\mathfrak{p}$ be a prime ideal of $\mathcal{O}$ and $\mathfrak{m}$ be a squarefree ideal of $\mathcal{O}$ such that $\mathfrak{p} \nmid \mathfrak{m}$ and such that the level structure $U_1(\mathfrak{mp})$ is small enough (as in Definition \ref{smallenough}). Suppose that $l > k+1$ is a rational prime which is unramified in $\mathcal{O}$ and satisfies $l\nmid N(\mathfrak{mp})$.

If $\Lambda^\prime \in \mathbb{Z}[\eta, \psi]$ is a prime above $l$ satisfying
\begin{equation*}
    \text{ord}_{\Lambda^\prime}(L(\eta^{-1}\psi, 1-k)(\eta(\mathfrak{p})-\psi(\mathfrak{p})N(\mathfrak{p})^k)) > 0,
\end{equation*}
then there exists a normalised Hilbert eigenform $f \in S_{\textbf{k}}(\mathfrak{mp},\phi')$ of parallel weight $\textbf{k} = (k,k,...,k)$ and narrow ray class character $\phi'$ of modulus $\mathfrak{mp}$ satisfying $\phi'\equiv \phi \bmod \Lambda'$, and a prime $\Lambda \mid \Lambda^\prime$ of $\mathcal{O}_f[\eta, \psi]$ such that
\begin{equation*}
    c(\mathfrak{q},f) \equiv c(\mathfrak{q},E_k(\eta, \psi)) \ \textnormal{mod } \Lambda
\end{equation*} for all prime ideals $\mathfrak{q} \nmid \mathfrak{mp}$.
\end{thm}

\begin{proof}
    Take the combination of Eisenstein series denoted $E^\delta$ in Section \ref{sec:constantterm}, with $\delta = \eta(\mathfrak{p})$:
\begin{equation*}
    E^\delta = E_k^{(\mathcal{O})}(\eta, \psi) - \eta(\mathfrak{p})E_k^{(\mathfrak{p})}(\eta, \psi).
\end{equation*}
The form $E^\delta$ can be viewed as a modular form in $M(\mathbb{C}, \mu_{mp}, \chi)$ with $\chi=(\chi_1\cdots\chi_d)^k$ and $mp\mathbb{Z} = \mathfrak{mp} \cap \mathbb{Z}$ (with $mp>0$). Take $R$ to be the $\mathbb{Z}[1/\mathfrak{m}]$-algebra, $R=\mathcal{O}[\zeta_{mp}, \eta, \psi]$, for $\zeta_{mp}$ a primitive $mp^\text{th}$ root of unity. By Corollary \ref{cor:E} the $q$-expansion of $E^\delta$ at all cusps lies in $R$, and so the $q$-expansion principle implies $E^\delta \in M(R, \mu_{mp}, \chi)$. 

Corollary \ref{modpcuspform} implies that $E^\delta$ vanishes at all cusps modulo $\Lambda^\prime$, when viewed as a classical Hilbert modular form. Let the reduction of $E^\delta$ mod $\Lambda^\prime$ be $\tilde{E}$. Then the $q$-expansion principle implies that $\tilde{E} \in S(\mathbb{F}_{\Lambda^\prime}, \mu_{mp}, \chi)\subseteq S(\bar{\mathbb{F}}_l, \mu_{mp}, \chi)$. Fix a finite extension $K_{\Lambda^{\prime\prime}}$ of $\mathbb{Q}_l$ with corresponding ring of integers $R_1:=\mathcal{O}_{\Lambda^{\prime\prime}}$ having residue field $\kappa_1 \cong \mathbb{F}_{\Lambda'}$, so that $\tilde{E} \in S(\kappa_1, \mu_{mp}, \chi)$. 

We wish to view $\tilde{E}$ as an adelic mod $l$ Hilbert modular form, as introduced in Section \ref{sec:adelicmodl}. The condition that $U_1(\mathfrak{mp})$ is small enough allows us to view $\tilde{E}$ as an element of $S(U_1(\mathfrak{mp});\kappa_1)$ (by the proof of Lemma 2.4.1 of \cite{diamondsasaki}). 

Now by Theorem \ref{thm:lift} and the fact that $k>2$, we know that every element of $S_{k,0}(U_1(\mathfrak{mp});\kappa_1)$ can be lifted to an element of $S_{k,0}(U_1(\mathfrak{mp});R_1)$. In particular, $\tilde{E}$ can be lifted to a characteristic zero cusp form $f_1 \in S_{k,0}(U_1(\mathfrak{mp});R_1)$. However, $f_1$ may not be an eigenvector for all the $T_\mathfrak{q}$ Hecke operators such that $\mathfrak{q} \nmid \mathfrak{mp}$, but an application of the Deligne-Serre lifting lemma \cite[Lemma 6.11]{deligneserre} gives rise to an eigenform $f_2 \in S_{k,0}(U_1(\mathfrak{mp});R_2)$ with Hecke eigenvalues satisfying:
\begin{equation*}
    T_{\mathfrak{q}}f_2 = (\eta(\mathfrak{q}) + \psi(\mathfrak{q})N(\mathfrak{q})^{k-1})f_2 \ (\text{mod } \Lambda),
\end{equation*}
for all $\mathfrak{q} \nmid \mathfrak{mp}$. Here $R_2:= \mathcal{O}_{\Lambda}$ is the ring of integers of some finite extension $K_{\Lambda}$ of $K_{\Lambda^{''}}$. This $f_2$ arises from some $f \in S_{k,0}(U_1(\mathfrak{mp});\mathbb{C})$, and translating back to classical Hilbert modular forms gives the existence of an eigenform $f \in S_{\textbf{k}}(\mathfrak{mp}, \phi^\prime)$ of parallel weight $\textbf{k} = (k,k,...,k)$ and narrow ray class character $\phi'$ of modulus $\mathfrak{mp}$ satisfying $\phi^\prime \equiv \phi \ \text{mod } \Lambda'$ and the congruence
\begin{equation*}
    c(\mathfrak{q}, f) \equiv \eta(\mathfrak{q}) + \psi(\mathfrak{q})N(\mathfrak{q})^{k-1} \ \text{mod } \Lambda
\end{equation*}
for all $\mathfrak{q} \nmid \mathfrak{mp}$. 
\end{proof}

\noindent \textbf{Remark:} When the level structure $U_1(\mathfrak{mp})$ is not small enough, one can always introduce an auxiliary prime ideal $\mathfrak{m}_0\nmid \mathfrak{mp}$ such that $U_1(\mathfrak{mpm}_0)$ is small enough (for example, take any $\mathfrak{m}_0$ satisfying the condition on the prime $\mathfrak{q}$ in Lemma $2.4.1$ of \cite{diamondsasaki}). This allows $\tilde{E}$ to be viewed as an element of $S(U_1(\mathfrak{mpm}_0);\kappa_1)$. The rest of the proof follows, giving an eigenform $f\in S_{\textbf{k}}(\mathfrak{mpm}_0,\phi')$ satisfying the congruence. It is highly plausible (and expected) that there still exists an eigenform in $S_{\textbf{k}}(\mathfrak{mp},\phi)$ satisfying the congruence, but at present we are unable to establish the relevant level lowering results and generalisation of Carayol's Lemma (Lemma $1$ of \cite{carayol}). We thank an anonymous referee for bringing to our attention the fact that the results of \cite{jarvis} and \cite{jarvis2} cannot be used in this case, since they assume that the Hilbert modular form has residually irreducible Galois representation.

\vspace{0.1in}
Building on the above result, we now prove sufficient and necessary conditions for a Hilbert newform to satisfy the congruence.
\begin{thm}
\label{thm:main}
    Assume the setup of Theorem \ref{thm:existence}. If there exists a newform $f\in S_{\textbf{k}}(\mathfrak{mp},\phi')$ satisfying the congruence then the following conditions are met: 
    \begin{enumerate}
        \item $\text{ord}_{\Lambda^\prime}(L(\eta^{-1}\psi, 1-k)(\eta(\mathfrak{p})-\psi(\mathfrak{p})N(\mathfrak{p})^k)) > 0$,
        \item $\Lambda^\prime \mid (\eta(\mathfrak{p})-\psi(\mathfrak{p})N(\mathfrak{p})^k)(\eta(\mathfrak{p})-\psi(\mathfrak{p})N(\mathfrak{p})^{k-2})$.
    \end{enumerate}
Conversely, suppose that $\Lambda'$ satisfies these conditions and that there exists an $\mathfrak{m}$-new form of level $\mathfrak{mp}$ satisfying the congruence. Then there exists an integer $E$ (to be established in the proof) such that if:
\begin{enumerate}\setcounter{enumi}{2}
        \item $l\nmid E\cdot(N(\mathfrak{p})+1)$,
    \end{enumerate}
    then there exists a newform $f\in S_{\textbf{k}}(\mathfrak{mp},\phi')$ satisfying the congruence.
\end{thm}

\vspace{0.1in}
\noindent \textbf{Remark:} The condition that there exists an $\mathfrak{m}$-new form of level $\mathfrak{mp}$ satisfying the congruence is automatically satisfied when $\mathfrak{m} = \mathcal{O}$ (hence why this condition does not appear in the statement of Theorem $1.2$). More generally, this condition is harmless since we expect it to almost always be satisfied. As explained in the above remark, the relevant analogue of Carayol's Lemma would let us prove the existence of an eigenform in $S_{\textbf{k}}(\mathfrak{mp},\phi)$ satisfying the congruence, which must necessarily be $\mathfrak{m}$-new (since the conductor of $\phi$ is $\mathfrak{m}$). To bypass the need for Carayol's Lemma one could make the more restrictive assumption that $l\nmid h_{\mathfrak{mp}}^+$, where $h_{\mathfrak{mp}}^+$ is the size of the narrow ray class group modulo $\mathfrak{mp}$. The congruence $\phi' \equiv \phi \bmod \Lambda'$ would then force $\phi' = \phi$ (since then the $h_{\mathfrak{mp}}^+$-th roots of unity would be distinct mod $\Lambda'$).

\vspace{0.1in}
\noindent \textbf{Remark:} Condition $(3)$ is much weaker than is necessary, but provides a much cleaner statement overall. This will become clear when we come to the proof.

\vspace{0.1in}
\noindent \textbf{Remark:} When $F=\mathbb{Q}$ this is essentially the content of Theorem $3.7$ of \cite{fretwellroberts}, but there are subtle differences between the conditions. In our previous paper we were able to use a level-raising result of Diamond to prove the converse of the theorem, assuming conditions $(1)$ and $(2)$ and the extra assumption that $l\nmid \phi(\mathfrak{m})$ (here $\mathfrak{m}$ is principal, so this is the classical Euler totient function). In this paper we use a more general level-raising result of Taylor which gives condition $(3)$ as a consequence...but it is unclear, at least to us, how these conditions relate. It would be an interesting future project to see what the precise relationship between these conditions is, and whether either can be modified/removed.

\subsection{Necessary conditions}
Let's first prove that conditions (1) and (2) are necessary, i.e. start with the assumption that we have a Hilbert eigenform $f\in S_{\textbf{k}}(\mathfrak{mp},\phi')$ as described in Theorem \ref{thm:existence} that satisfies the required congruence.
\begin{proof}
There is a $\Lambda$-adic Galois representation attached to $f$ (see \cite{taylor}):
\begin{equation*}
    \rho_{f, \Lambda}: \text{Gal}(\Bar{F}/F) \rightarrow GL_2(\mathcal{O}_{f}[\eta, \psi]_{\Lambda}),
\end{equation*}
such that for a prime ideal $\mathfrak{q} \nmid \mathfrak{mp}l$ in $F$:
\begin{align*}
    \text{Tr}(\rho_{f, \Lambda}(\text{Frob}_\mathfrak{q})) &=  c(\mathfrak{q},f), \\
    \text{Det}(\rho_{f, \Lambda}(\text{Frob}_\mathfrak{q})) &= \phi'(\mathfrak{q})N(\mathfrak{q})^{k-1}.
\end{align*}
    Since $f$ satisfies 
    \begin{equation*}
        c(\mathfrak{q}, f) \equiv \eta(\mathfrak{q})+ \psi(\mathfrak{q})N(\mathfrak{q})^{k-1} \ \text{mod } \Lambda,
    \end{equation*}
    the Cebotarev density theorem and the Brauer-Nesbitt theorem imply that $\rho_{f,\Lambda}$ is residually reducible mod $\Lambda$, and a standard argument due to Ribet (Proposition $2.1$ of \cite{ribet}) implies that we may realise the reduced representation as follows:
    \begin{equation*}
        \bar{\rho}_{f, \Lambda} \sim \left(\begin{matrix}
            \bar{\eta} & * \\ 0 & \bar{\psi}\chi_l^{k-1}
        \end{matrix}\right),
    \end{equation*}
    where $\chi_l^{k-1}$ is the mod $l$ cyclotomic character. Then taking the semi-simplification we get
    \begin{equation*}
        \bar{\rho}_{f, \Lambda}^{ss} \sim \bar{\eta} \oplus \bar{\psi}\chi_l^{k-1}.
    \end{equation*}
    We also have that the local components of $\bar{\rho}_{f,\Lambda}$ at $\mathfrak{p}$ are given by:
    \begin{equation*}
        \bar{\rho}_{f, \Lambda}^{ss} \mid_{W_{F_\mathfrak{p}}} \sim (\mu \chi_l^{k/2} \oplus \mu\chi_l^{k/2-1})\mid_{W_{F_{\mathfrak{p}}}}
    \end{equation*}
    as in \cite{schmidt}. Here, $W_{F_\mathfrak{p}}$ is the local Weil group at $\mathfrak{p}$ and $\mu : W_{F_{\mathfrak{p}}} \rightarrow \mathbb{F}_{\Lambda}^\times$ is the unramified character satisfying $\mu(\text{Frob}_\mathfrak{p})=c(\mathfrak{p},f)/N(\mathfrak{p})^{k/2-1}$. Then, at $\mathfrak{p}$ we have the equivalence:
    \begin{equation*}
        (\mu \chi_l^{k/2} \oplus \mu\chi_l^{k/2-1})\mid_{W_{F_{\mathfrak{p}}}} \sim (\bar{\eta} \oplus \bar{\psi}\chi_l^{k-1}) \mid_{W_{F_{\mathfrak{p}}}}.
    \end{equation*}
    This gives us two cases:
    \begin{itemize}
        \item[(A)] $\bar{\eta}\mid_{W_{F_\mathfrak{p}}} = \mu \chi_l^{k/2} \mid_{W_{F_\mathfrak{p}}}$, \ $\bar{\psi}\chi_l^{k-1}\mid_{W_{F_\mathfrak{p}}} = \mu \chi_l^{k/2-1} \mid_{W_{F_\mathfrak{p}}}$. \\
        Evaluating at $\text{Frob}_\mathfrak{p}$, we get
        \begin{align*}
            \eta(\mathfrak{p}) &\equiv \mu(\mathfrak{p})N(\mathfrak{p})^{k/2} \ \text{mod } \Lambda, \\
            \psi(\mathfrak{p})N(\mathfrak{p})^{k-1} &\equiv \mu(\mathfrak{p})N(\mathfrak{p})^{k/2-1} \ \text{mod } \Lambda.
        \end{align*}
        Hence, we have
        \begin{equation*}
            \eta(\mathfrak{p}) - \psi(\mathfrak{p})N(\mathfrak{p})^k \equiv 0 \ \text{mod } \Lambda.
        \end{equation*}
        \item[(B)] $\bar{\eta}\mid_{W_{F_\mathfrak{p}}} = \mu \chi_l^{k/2 - 1} \mid_{W_{F_\mathfrak{p}}}$, \ $\bar{\psi}\chi_l^{k-1}\mid_{W_{F_\mathfrak{p}}} = \mu \chi_l^{k/2} \mid_{W_{F_\mathfrak{p}}}$. \\
        Evaluating at $\text{Frob}_\mathfrak{p}$, we  get
        \begin{align*}
            \eta(\mathfrak{p}) &\equiv \mu(\mathfrak{p})N(\mathfrak{p})^{k/2 - 1} \ \text{mod } \Lambda, \\
            \psi(\mathfrak{p})N(\mathfrak{p})^{k-1} &\equiv \mu(\mathfrak{p})N(\mathfrak{p})^{k/2} \ \text{mod } \Lambda.
        \end{align*}
        Hence, we have 
        \begin{equation*}
            \eta(\mathfrak{p}) - \psi(\mathfrak{p})N(\mathfrak{p})^{k-2} \equiv 0 \ \text{mod } \Lambda,
        \end{equation*}
        noting that $\Lambda \nmid \mathfrak{p}$. 

        In this case, from the definition of $\mu$, we also have
        \begin{equation*}
            c(\mathfrak{p},f) \equiv \eta(\mathfrak{p}) \ \text{mod } \Lambda.
        \end{equation*}
    \end{itemize}
    In summary, at $\mathfrak{p}$, we satisfy one of the following:
    \begin{itemize}
        \item[(A)] $\eta(\mathfrak{p}) - \psi(\mathfrak{p})N(\mathfrak{p})^{k} \equiv 0 \ \text{mod } \Lambda$,
        \item[(B)] $\eta(\mathfrak{p}) - \psi(\mathfrak{p})N(\mathfrak{p})^{k-2} \equiv 0 \ \text{mod } \Lambda$ and $c(\mathfrak{p},f) \equiv \eta(\mathfrak{p}) \ \text{mod } \Lambda.$
    \end{itemize}
    By taking norms down to $\mathbb{Z}[\eta, \psi]$, this implies condition (2) in the theorem, for some $\Lambda^\prime \in \mathbb{Z}[\eta, \psi]$ such that $\Lambda \mid \Lambda^\prime$. All that is left to show is condition (1):
    \begin{equation*}
        \text{ord}_{\Lambda^\prime}(L(\eta^{-1}\psi, 1-k)(\eta(\mathfrak{p})-\psi(\mathfrak{p})N(\mathfrak{p})^k)) > 0.
    \end{equation*}
    Again, this is satisfied as long as we are in case (A). So, assume we are in case (B), otherwise we are done.

    Recall the Eisenstein series $E^\delta$ given in Section \ref{sec:constantterm} with $\delta= N(\mathfrak{p})^{k-1}\psi(\mathfrak{p})$, i.e. $E^\delta = E_k^{(\mathcal{O})}(\eta, \psi) - N(\mathfrak{p})^{k-1}\psi(\mathfrak{p})E_k^{(\mathfrak{p})}(\eta, \psi)$. This has Fourier coefficients at prime ideal $\mathfrak{q}$ given by:
    \begin{equation*}
       c(\mathfrak{q}, E^\delta) =  \begin{cases}
            \eta(\mathfrak{q}) + \psi(\mathfrak{q})N(\mathfrak{q})^{k-1} \hspace{3mm} &\text{if } \mathfrak{q} \neq \mathfrak{p}, \\
            \eta(\mathfrak{q}) &\text{if } \mathfrak{q} = \mathfrak{p}.
        \end{cases}
    \end{equation*}
    Hence, we have that for all prime ideals $\mathfrak{q} \nmid l$,
    \begin{equation*}
        c(\mathfrak{q}, f) \equiv c(\mathfrak{q}, E^\delta) \ \text{ mod } \Lambda.
    \end{equation*}
    Since $f$ and $E^\delta$ are both normalised Hecke eigenforms, this implies
    \begin{equation*}
        c(\mathfrak{n}, f) \equiv c(\mathfrak{n}, E^\delta) \ \text{ mod } \Lambda,
    \end{equation*}
    for all $\mathfrak{n}$ coprime to $l$ in $\mathcal{O}$.
    As discussed in Section \ref{sec:modlforms}, we can reduce $f$ and $E^\delta$ to mod $l$ modular forms. We first view $f$ and $E^\delta$ as modular forms in $M(\mathbb{C}, \mu_{mp}, \chi)$, where $\chi = (\chi_1 \cdots \chi_d)^k$ and $mp=\mathfrak{mp} \cap \mathbb{Z}$. Now, note that by definition $f$ has coefficients in $\mathcal{O}_f[\eta, \psi]$ and $E^\delta$ has coefficients in $\mathbb{Z}[\eta,\psi]$ away from cusps. In addition, Corollary \ref{cor:Eprime} implies that the cusps also lie in $\mathbb{Z}[\eta, \psi]$ so there is a natural reduction mod $\Lambda$ map for $f$ and $E^\delta$. We take the mod $\Lambda$ reduction of $E^\delta$ and $f$ to be $\tilde{E}$ and $\tilde{f}$ respectively with Fourier coefficients given by the natural reduction mod $\Lambda$ map. Since the Fourier coefficients $c(\mathfrak{n},\tilde{f})$ and $c(\mathfrak{n}, \tilde{E})$ lie in $\bar{\mathbb{F}}_l$, the $q$-expansion principle tells us that 
     $\tilde{f}, \tilde{E} \in M(\bar{\mathbb{F}}_l, \mu_{mp}, \chi)$. Applying the theta map, defined by \eqref{eq:theta}, to $\tilde{E}$ and $\tilde{f}$, we find $\theta(\tilde{f})=\theta(\tilde{E})$. But since $k < l - 1$, $\tilde{E}$ and $\tilde{f}$ are of minimal weight so the map $\theta$ is injective. Hence $\tilde{f} = \tilde{E}$ as $q$-expansions. In particular, since $\tilde{f} \in S(\bar{\mathbb{F}}_l, \mu_{mp}, \chi)$ is a cusp form, $\tilde{E}$ must vanish at all the cusps. Hence, $E^\delta$ must vanish at all cusps modulo $\Lambda$. Then by Corollary \ref{cor:Eprime}, choosing class representatives $\mathfrak{c}_1, \cdots, \mathfrak{c}_h \in Cl_F$ such that $\Lambda \nmid N(\mathfrak{c}_i)$, we have that the following
     \begin{align*}
         c_{\lambda}(0,E^\delta|A) &= \frac{1}{2^g}\frac{\tau(\eta\psi^{-1})}{\tau(\psi^{-1})}\left(\frac{N(\mathfrak{c}_i)}{N(\mathfrak{a})}\right)^k\text{sgn}(-\gamma)^{\textbf{q}}\eta(\gamma(\mathfrak{bdt}_{\lambda}\mathfrak{c}_i)^{-1})\text{sgn}(\alpha)^{\textbf{r}}\psi^{-1}(\alpha\mathfrak{c}_i^{-1})\\ 
    &\cdot L(\eta^{-1}\psi,1-k)\frac{(\eta(\mathfrak{p}) - \psi(\mathfrak{p})N(\mathfrak{p})^{k-1})}{\eta(\mathfrak{p})}
     \end{align*}
must satisfy $\text{ord}_\Lambda(c_{\lambda}(0,E^\delta|A))>0.$ By assumption, we have that $\Lambda \nmid 2$ and $\Lambda \nmid N(\mathfrak{a})$, since $\mathfrak{a} \mid \mathfrak{m}$. Also, since $\left|\frac{\tau(\eta\psi^{-1})}{\tau(\psi^{-1})}\right|^2=N(\mathfrak{a})$ and $\text{sgn}(-\gamma)^{\textbf{q}}$, $\eta(\gamma(\mathfrak{bdt}_{\lambda}\mathfrak{c}_i)^{-1})$, $\text{sgn}(\alpha)^{\textbf{r}}$, $\psi^{-1}(\alpha\mathfrak{c}_i^{-1})$ and $\eta^{-1}(\mathfrak{p})$ are all units in $\mathbb{Z}[\eta, \psi]$, they are not divisible by $\Lambda$. Since we have also chosen $\mathfrak{c}_i$ such that $\Lambda \nmid N(\mathfrak{c}_i)$, this implies that
\begin{equation*}
    \text{ord}_\Lambda\left( L(\eta^{-1}\psi,1-k)(\eta(\mathfrak{p}) - \psi(\mathfrak{p})N(\mathfrak{p})^{k-1}) \right) >0.
\end{equation*}
By assumption, we also have that $\Lambda \mid  (\eta(\mathfrak{p}) - \psi(\mathfrak{p})N(\mathfrak{p})^{k-2})$, which implies
\begin{equation*}
    \text{ord}_\Lambda\left( L(\eta^{-1}\psi,1-k)(\eta(\mathfrak{p}) - \psi(\mathfrak{p})N(\mathfrak{p})^{k}) \right) >0.
\end{equation*}
Taking norms down to $\mathbb{Z}[\eta, \psi]$, this implies Condition (1) of the theorem, and hence completes the proof.
\end{proof}

\subsection{Sufficient conditions}
We are now ready to prove the converse of Theorem \ref{thm:main}.

\begin{proof}
Assuming condition (1) of Theorem \ref{thm:main}, the proof of Theorem \ref{thm:existence} provides us with a non-empty set of eigenforms $f \in S_{\textbf{k}}(\mathfrak{mp}, \phi')$ satisfying the congruence (of varying characters $\phi'$). Consider the subset $A$ of these eigenforms that are $\mathfrak{m}$-new (which by assumption is non-empty). We now show that conditions (2) and (3) of Theorem \ref{thm:main} prove the existence of a newform in $A$ that satisfies the congruence. 

Suppose for a contradiction that there are no newforms in $A$. Then each $f\in A$ is $\mathfrak{m}$-new and corresponds to some newform $g \in S_{\textbf{k}}(\mathfrak{m}, \phi')$. By the Cebotarev density theorem, each such $g$ has residual Galois representation satisfying 
\begin{equation*}
    \bar{\rho}_{g, \Lambda} \sim \bar{\rho}_{f, \Lambda}
\end{equation*}
and the congruence condition on $f$ implies that
\begin{equation*}
    \bar{\rho}_{f, \Lambda}^{ss} \sim \bar{\eta} \oplus \bar{\psi}\chi_l^{k-1}.
\end{equation*}
So, since $\mathfrak{p} \nmid \mathfrak{m}l$ we have 
\begin{equation*}
    c(\mathfrak{p}, g) \equiv \eta(\mathfrak{p}) + \psi(\mathfrak{p})N(\mathfrak{p})^{k-1} (\text{mod } \Lambda).
\end{equation*}
A result of Taylor \cite[Thm. 1]{taylor} tells us that each $g$ gives rise to an ideal $E_g$ of $\mathcal{O}_g$ such that if the following conditions hold:
\begin{equation}
\label{eq:levelraise}
     \Lambda\,|\, c(\mathfrak{p}, g)^2 - \phi'(\mathfrak{p})N(\mathfrak{p})^{k-2}(N(\mathfrak{p})+1)^2, 
\end{equation}
\begin{equation}
\label{eq:levelraise2}
     \Lambda\,\nmid\, E_g\cdot (N(\mathfrak{p})+1),
\end{equation}
then there exists a $\mathfrak{p}$-new form $f^\prime \in S_{\textbf{k}}(\mathfrak{mp}, \phi')$ satisfying 
\begin{equation*}
    f^\prime \equiv g \ (\text{mod } \Lambda).
\end{equation*} Taking $E$ to be the gcd of the integers $N(E_g)$ in Condition $(3)$ is enough to imply Equation \ref{eq:levelraise2} and so we now focus on Equation \ref{eq:levelraise}.
By Condition (2) of Theorem \ref{thm:main}, we know that one of the following holds:
\begin{align*}
    \eta(\mathfrak{p}) &\equiv \psi(\mathfrak{p})N(\mathfrak{p})^k \ (\text{mod } \Lambda), \\
    \eta(\mathfrak{p}) &\equiv \psi(\mathfrak{p})N(\mathfrak{p})^{k-2} \ (\text{mod } \Lambda).
\end{align*}
So,
\begin{equation*}
    c(\mathfrak{p},g) \equiv \eta(\mathfrak{p})+\psi(\mathfrak{p})N(\mathfrak{p})^{k-1} \equiv 
    \begin{cases}
        \eta(\mathfrak{p})(1+N(\mathfrak{p})^{-1}) \ (\text{mod } \Lambda) \ &\text{ if } \eta(\mathfrak{p}) \equiv \psi(\mathfrak{p})N(\mathfrak{p})^k \ (\text{mod } \Lambda), \\
        \eta(\mathfrak{p})(1+N(\mathfrak{p})) \ (\text{mod } \Lambda) \ &\text{ if } \eta(\mathfrak{p}) \equiv \psi(\mathfrak{p})N(\mathfrak{p})^{k-2} \ (\text{mod } \Lambda).
    \end{cases}
\end{equation*}
We now claim Equation \eqref{eq:levelraise} holds in either case. 
Indeed, if $\eta(\mathfrak{p}) \equiv \psi(\mathfrak{p})N(\mathfrak{p})^{k} \ \text{mod } \Lambda$ then
\begin{align*}
    \phi'(\mathfrak{p})N(\mathfrak{p})^{k-2}(N(\mathfrak{p})+1)^2 &= \eta(\mathfrak{p})\psi(\mathfrak{p})N(\mathfrak{p})^{k-2}(N(\mathfrak{p})+1)^2 \\ &\equiv \eta(\mathfrak{p})^2N(\mathfrak{p})^{-2}(N(\mathfrak{p})+1)^2 \\ 
    &= \eta(\mathfrak{p})^2(N(\mathfrak{p})^{-1}+1)^2 \\ 
    &\equiv c(\mathfrak{p},g)^2 \ (\text{mod } \Lambda). 
\end{align*}

Similarly, if $\eta(\mathfrak{p}) \equiv \psi'(\mathfrak{p})N(\mathfrak{p})^{k-2} \ \text{mod } \Lambda$ then
\begin{align*}
    \phi'(\mathfrak{p})N(\mathfrak{p})^{k-2}(N(\mathfrak{p})+1)^2 &= \eta(\mathfrak{p})\psi(\mathfrak{p})N(\mathfrak{p})^{k-2}(N(\mathfrak{p})+1)^2 \\ &\equiv \eta(\mathfrak{p})^2(N(\mathfrak{p})+1)^2 \\ &\equiv c(\mathfrak{p},g)^2 \ (\text{mod } \Lambda). 
\end{align*}
Thus, there exists some $\mathfrak{p}$-newform $f^\prime \in A$. Since $f'$ was also assumed to be $\mathfrak{m}$-new we conclude that $f'$ is a newform, giving the required contradiction.
\end{proof}

\textbf{Remark:} It is noted by Taylor that when the weight $k$ is even each ideal $E_g$ can be taken to satisfy the following for all primes $\mathfrak{q}\nmid N(\mathfrak{p})$: \[ E_g \text{ divides } \begin{cases}
\langle c(\mathfrak{q},g)^{h^+} -\,(1+N(\mathfrak{q}))^{h^+}\rangle & \text{ if } \mathfrak{q}\nmid \mathfrak{m} \\
\langle c(\mathfrak{q},g)^{h^+}-\,N(\mathfrak{q})^{h^+}\rangle & \text{ if } \mathfrak{q}\,|\,\mathfrak{m}\end{cases}.\] 
In this case condition $(3)$ of Theorem \ref{thm:main} can be strengthened without much effort, since in the proof we only really required that $\Lambda\nmid E_g$ for some oldform $g\in S_{\textbf{k}}(\mathfrak{m},\phi')$ satisfying the congruence, which would follow from the existence of a prime ideal $\mathfrak{q}\nmid \mathfrak{m}N(\mathfrak{p})$ of $\mathcal{O}$ such that \[c(\mathfrak{q},g)^{h^+} \not\equiv (1+N(\mathfrak{q}))^{h^+} (\bmod \, \Lambda).\] However, by assumption each $g$ satisfies the congruence \[c(\mathfrak{q},g) \equiv \eta(\mathfrak{q}) + \psi(\mathfrak{q})N(\mathfrak{q})^{k-1} (\bmod \,\Lambda)\] for all such $\mathfrak{q}$ and so it suffices to find such a $\mathfrak{q}$ satisfying \[(\eta(\mathfrak{q})+\psi(\mathfrak{q})N(\mathfrak{q})^{k-1})^{h^+} \not\equiv (1 + N(\mathfrak{q}))^{h^+} (\bmod \,\Lambda).\] This is a condition that is independent of $g$ and is highly likely to be satisfied for some valid $\mathfrak{q}$. Thus in the case when $k$ is even we may replace condition $(3)$ of Theorem \ref{thm:main} with both the existence of such a $\mathfrak{q}$ and the extra condition that $\Lambda\,\nmid\, N(\mathfrak{p})+1$.

In the special case where $h^+=1$, $\mathfrak{m} = \mathcal{O}$ and $\mathfrak{p} = \langle \pi\rangle$ for some prime $\pi\in\mathcal{O}^+$, we are forced to have trivial characters $\eta,\psi$ and even weight $k$. The two conditions mentioned above are then equivalent to conditions $(3)$ and $(4)$ in Theorem $1.2$, so that this result follows from the proof of Theorem \ref{thm:main}.

\vspace{0.1in}
\textbf{Remark:} When the level structure $U_1(\mathfrak{mp})$ is not small enough we can once again add in an auxiliary prime $\mathfrak{m}_0$, and the proof of Theorem \ref{thm:main} would still work, providing the existence of congruences for $\mathfrak{mp}$-new forms in $S_{\textbf{k}}(\mathfrak{mpm}_0,\phi')$. Again, the relevant level lowering results would be required to remove $\mathfrak{m}_0$ and prove the existence of newform congruences.

\vspace{0.1in}
We finish our paper by remarking that there should exist more general families of Eisenstein congruences for Hilbert eigenforms. In particular, there should exist congruences between level $\mathfrak{m}\mathfrak{n}$ cuspforms and level $\mathfrak{m}$ Eisenstein series (at the very least for $\mathfrak{n}$ square-free and coprime with $\mathfrak{m}$). The moduli for these congruences would arise from more general products of Hecke L-values and Euler factors at ramified primes. It should also be possible to predict a more general criteria for the existence of newform congruences in this case. It would be interesting to see such results, as they would complement well the main results/conjectures in the classical case (e.g.\ \cite{billereymenares}, \cite{billereymenares2}, \cite{dummigan}, \cite{dummiganfretwell} \cite{fretwellroberts}, \cite{spencer}).


\begin{thebibliography}{99}
\bibitem{andreattagoren}
Fabrizio Andreatta and Eyal Goren, \emph{Hilbert modular forms : mod p and p-adic aspects }, Mem. Amer. Math. Soc., Vol $173$ $(2005)$.

\bibitem{atwill}
Timothy Atwill and Benjamin Linowitz, \emph{Newform theory for Hilbert Eisenstein series}, Ramanujan J., Vol. $30$, Issue $2$ $(2013)$.

\bibitem{bergstromdummigan}
Jonas Bergstr\"om and Neil Dummigan, \emph{Eisenstein congruences for split reductive groups}, Selecta Mathematica, Vol. $22$ $(2016)$.

\bibitem{billereymenares}
Nicolas Billerey and Ricardo Menares, \emph{On the modularity of reducible mod $l$ Galois representations}, Math. Res. Letters, Vol. $23$ $(2016)$.

\bibitem{billereymenares2}
Nicolas Billerey and Ricardo Menares, \emph{Strong modularity of reducible Galois representations}, Trans. Amer. Math. Soc., Vol. $370$, No. $2$ $(2017)$.

\bibitem{carayol}
Henri Carayol, \emph{Sur les repr\'esentations Galoisiennes modulo $l$ attach\'ees aux formes modulaires}, Duke Math. J., Vol. $58$, No. $1$ $(1989)$.

\bibitem{chai}
Ching-Li Chai, \emph{Compactification of Siegel Moduli Schemes}, London Math. Soc. Lecture Note Series, Vol. $107$, Cambridge University Press $(1985)$.

\bibitem{dasguptakakde}
Samit Dasgupta and Mahesh Kakde,
\emph{On Constant terms of Eisenstein Series},
Acta Arithmetica, Vol. $200$, No. $2$ $(2021)$.

\bibitem{delignepappas}
Pierre Deligne and Georgios Pappas, \emph{Singularit\'es des espaces de modules de Hilbert, en les caract\'eristiques divisant le discriminant}, Compositio Mathematica, Vol. $90$, Issue $1$ $(1994)$.

\bibitem{deligneserre}
Pierre Deligne and Jean-Pierre Serre, \emph{Formes modulaires de poids $1$}, Annales scientifiques de l'\'Ecole
normale sup\'erieure, Serie $4$, Vol. $7$, No. $4$ $(1974)$.

\bibitem{diamond}
Fred Diamond, \emph{Compactifications of Iwahori-level Hilbert modular varieties}, J. Num. Theory, Vol. $263$ $(2024)$.

\bibitem{diamondkassaeisasaki}
Fred Diamond, Payman Kassaei, Shu Sasaki, \emph{A mod $p$ Jacquet-Langlands relation and Serre filtration via the geometry of Hilbert modular varieties: Splicing and dicing}, Ast\'erisque, Vol. $439$ $(2023)$.

\bibitem{diamondsasaki}
Fred Diamond, Shu Sasaki, \emph{A Serre weight conjecture for geometric Hilbert modular forms in characteristic $p$}, J. Eur. Math. Soc., Vol. $25$ $(2023)$.

\bibitem{dummigan}
Neil Dummigan, \emph{Eisenstein primes, critical values and global torsion}, Pacific J. Math., Vol. $233$, No. $2$ $(2007)$.

\bibitem{dummiganfretwell}
Neil Dummigan, Dan Fretwell, \emph{Ramanujan-style congruences of local origin}, J. Num. Theory, Vol. $143$ $(2014)$.

\bibitem{fretwellroberts}
Dan Fretwell and Jenny Roberts, \emph{Newform Eisenstein congruences of local origin}, Ramanujan J., Vol. $64$ $(2024)$.

\bibitem{goren2}
Eyal Goren, \emph{Hilbert Modular Forms Modulo $p^m$: The Unramified Case}, J. Num. Theory, Vol. $90$, Issue $2$ $(2001)$.

\bibitem{goren}
Eyal Goren, \emph{Lectures on Hilbert Modular Varieties and Modular Forms}, CRM Monograph Series, Vol. $14$ $(2002)$.

\bibitem{harder}
G\"unter Harder, \emph{A congruence between a {S}iegel and an elliptic modular form}, The $1$-$2$-$3$ of modular forms, Springer $(2008)$.

\bibitem{jarvis}
Frazer Jarvis, \emph{Mazur's Principle for Totally Real Fields of Odd Degree}, Compositio Mathematica, Vol. $116$, Issue $1$ $(1999)$.

\bibitem{jarvis2}
Frazer Jarvis, \emph{Level lowering for modular mod $l$ representations over totally real fields}, Math. Ann., Vol. $313$ $(1999)$.

\bibitem{katz}
Nicholas Katz, \emph{$p$-adic $L$-functions for CM fields}, Invent. Math., Vol. $49$ $(1978)$.

\bibitem{langwake}
Jaclyn Lang and Preston Wake, \emph{A modular construction of unramified p-extensions of $\mathbb{Q}(N^{\frac{1}{p}})$}, Proceedings of the AMS, Series B, $2022$.

\bibitem{lansuh}
Kai-Wen Lan and Junecue Suh, \emph{Liftability of mod $p$ cusp forms of parallel weights}, International Mathematics Research Notices, Vol. $2011$, Issue $8$ $(2011)$.

\bibitem{martin}
Kimball Martin, \emph{The Jacquet-Langlands correspondence, Eisenstein congruences, and integral L-values in weight 2}, Mathematical Research Letters, Vol. $24$, No. $6$ $(2017)$.

\bibitem{mazur}
Barry Mazur, \emph{Modular Curves and the Eisenstein Ideal}, Publ. Math. IHES, Vol. $47$, No. $202$ $(1977)$.

\bibitem{ozawa}
Tomomi Ozawa, \emph{Constant terms of Eisenstein series over a totally real field}, Int. J. Num. Theory, Vol. $13$, No. $2$ $(2017)$.

\bibitem{ribet}
Kenneth Ribet, \emph{A modular construction of unramified $p$-extensions of $\mathbb{Q}(\mu_p)$}, Inventiones Mathematicae, Vol. $34$, No. $3$ $(1976)$. 

\bibitem{schmidt}
Ralf Schmidt, \emph{Some remarks on local newforms for GL $(2)$}, J. Ramanujan Math. Soc., Vol. $17$ $(2002)$.

\bibitem{shimura}
Goro Shimura, \emph{The Special Values of the Zeta Functions associated with Hilbert Modular Forms}, Duke Math. J.
Vol. $45$, No. $3$ $(1978)$.

\bibitem{spencer}
David Spencer, \emph{Congruences of local origin for higher levels}, Thesis, Univ. Sheffield $(2018)$.

\bibitem{taylor}
Richard Taylor, \emph{On Galois representations associated to Hilbert modular forms}, Inventiones Mathematicae, Vol. $98$, Issue $2$ $(1989)$.

\bibitem{weisinger}
James Weisinger, \emph{Some results on classical Eisenstein series and modular forms over function fields}.
Thesis, Harvard Univ. $(1977)$.

\bibitem{wiles}
Andrew Wiles, \emph{On p-adic representations for totally real fields}, Ann. Math., Vol. $123$, No. $3$ $(1986)$.

\end{thebibliography}
\end{document}